\newtheorem{theorem}{Theorem}[section]
\newtheorem{corollary}[theorem]{Corollary}
\newtheorem{definition}[theorem]{Definition}
\newtheorem{lemma}[theorem]{Lemma}
\newtheorem*{notation}{Notation}
\newtheorem{proposition}[theorem]{Proposition}
\newtheorem{remark}[theorem]{Remark}
\newcommand{\vol}{\mathop{\mathrm{vol}}\nolimits}
\newcommand{\eps}{{\varepsilon}}
\newcommand{\fg}{{\mathfrak{g}}}
\newcommand{\tc}{{\tilde c}}
\newcommand{\hc}{{\hat c}}
\newcommand{\cC}{{\mathcal{C}}}
\numberwithin{equation}{section}
\newcommand{\boundellipse}[3]
{(#1) ellipse (#2 and #3)
}
\author{Dmitry Dolgopyat and P\'eter N\'andori}
\title[Mixing and LLT for hyperbolic flows]
{On mixing and the local central limit theorem for hyperbolic flows}
\address{University of Maryland Department of Mathematics 
4176 Campus Drive College Park, MD 20742-4015 USA}
\begin{document}

\maketitle

\begin{abstract}
We formulate abstract conditions under which a suspension flow satisfies 
the local central limit theorem. We check the validity of these conditions for several systems 
including reward renewal processes, Axiom A flows, as well as the systems admitting Young tower,
such as Sinai billiard with finite horizon,
suspensions over Pomeau-Manneville maps, and geometric Lorenz attractors.
\end{abstract}

\section{Introduction}

Various stochastic aspects of deterministic, chaotic dynamical systems have been extensively 
studied in the last decades.
The central limit theorem (CLT) is a famous example. 
However, its local version (LCLT) has been studied much less for maps and especially for flows. 
In many cases, it is useful to view a chaotic flow as a suspension over a base map, whose chaotic 
properties are easier to prove. 
Then one tries to lift these statements from 
the map to the flow. 
This approach has been applied several times in the dynamics literature 
(see e.g. \cite{R73, DP84, MT04}),
but not so much specifically for the LCLT (we are only aware  of \cite{DN16, AN17}).
This is what this paper is about.

Some special cases, where the LCLT are proved for hyperbolic flows are

(i) Axiom A flows under a non-arithmeticity condition for observables 
\cite{W96};

(ii) the LCLT was obtained in \cite{I08} for a class of flows
whose transfer operator has a spectral gap on a suitable space;

(iii) the free path for Sinai billiard flow with finite horizon 
\cite{DN16}.


In the present paper, we formulate a set of abstract conditions which imply the LCLT for suspension flows.
The most important assumption is the LCLT for the base map. We also discuss connections of LCLT to mixing.
Namely, we give a necessary and sufficient condition for mixing of suspension flows
satisfying LCLT and also prove a joint extension
of mixing and the LCLT (which we abbreviate as MLCLT). We check that the conditions imposed on
the base map are satisfied by a large class of systems
where a Young tower with sufficiently fast return time can be constructed. In particular we generalize 
the results (i) and (iii) mentioned in  above.
Our approach is different from the methods of \cite{W96} and \cite{I08}. In fact we extend the method of
\cite{DN16} to a more general setting.

To be a little more precise, we will work with
a metric space $(X,d)$ with Borel algebra $ \mathcal G$, a probability measure $\nu$ 
and $T$, a self-map of $X$ that preserves $\nu$.  
Let $\tau: X \rightarrow \mathbb R_+$ be an integrable roof function, bounded away from zero. 
Let $\Phi^t$ be the corresponding 
suspension (semi-)flow on the phase space 
$\Omega =\{ (y, s): y \in X, s \in [0, \tau(y)] \} / \sim$, where
$(y,\tau(y)) \sim (Ty,0)$. The induced invariant measure is $\mu = \nu \otimes Leb / \nu(\tau)$.
Let $\bm \varphi : \Omega \rightarrow \mathbb R$ be 
a function.

Setting aside several technical conditions, the LCLT
for flows can be informally stated as follows. 
\begin{equation}
\label{LLTProj}
t^{1/2} \mu \left( \bm x: \int_0^t \bm \varphi(\Phi^s(\bm x)) ds \in w \sqrt t + I \right) \sim \fg(w) u(I)
\end{equation}
as $t \gg 1$, where 
 $I$ is a bounded subinterval of $\mathbb R$, $\fg$ is a Gaussian density
and $u$ is a measure having a large symmetry group. The case $u = Leb$ is called continuous. 
The case where $u$ is invariant under a sublattice $a\mathbb{Z}$
is called discrete.
Several previous papers on LCLT for dynamical systems presented conditions for $u$
to be equal to the countying measure on $a\mathbb{Z}.$ We allow more general measures since it
increases the domain of validity of our results. On the other hand the conditions presented 
in our paper do not guarantee that $a\mathbb{Z}$ is the largest group of translations preserving $u$
since the factor measure on $\mathbb{R}/a \mathbb{Z}$ could ``accidentally'' have some axtra symmetries.

We note that
the LCLT clearly implies the CLT (assuming that the convergence is uniform for $w$ in compact intervals). 

MLCLT is a joint generalization of the the LCLT and the mixing of the flow $(\Omega, \nu, \Phi^t)$.
Recall that mixing means that for all measurable sets $A, B$
$$ \mu(\bm x\in A, \Phi^t \bm x\in B)\sim \mu(\bm x\in A)\mu(\Phi^t \bm x\in B)=\mu(A)\mu(B)\text{ as }t\to\infty. $$
Thus the natural definition of MLCLT would be 
\begin{equation}
\label{NaiveLLT}
\mu\left(x\in A, \Phi^t x\in B, \int_0^t \bm \varphi(\Phi^s(\bm x)) ds \in w \sqrt t + I \right)
\end{equation}
$$ \sim t^{-1/2} \mu(A)\mu(B) 
\fg(w) u(I)
\text{ as }t\to\infty. $$
In the continuous case we indeed define MLCLT by \eqref{NaiveLLT} while in the discrete case
we require that \eqref{NaiveLLT} holds after a certain change of variables which straigthens
$u$ in the fibers, see Definition \ref{defMLCLT} for details. 

In the continuous case, under an assumption that $\bm \varphi$ has zero mean,
 \eqref{NaiveLLT} can be interpreted as mixing property of 
flow $\bar\Phi$ acting on $\Omega\times \mathbb{R}$ by
$ \bar\Phi^t(x, z)=\left(\Phi^t x,   z+\int_0^t \bm \varphi(\Phi^s(\bm x)) ds\right)$
with respect to an infinite invariant measure $\mu\times$Leb.
Also, in the discrete case, MLCLT has an interpretation as mixing of a certain $\mathbb{Z}$
extension of $\Phi$ (see \cite{AN17}).

The result of our analysis is that \eqref{LLTProj} may in general fail for some arithmetic reasons
(see Section \ref{SS:iid} for an explicit example).
However all limit points of the LHS of \eqref{LLTProj} are of the form given by the RHS
of \eqref{LLTProj} with, possibly, different measures $u.$ We also provide sufficient conditions
for \eqref{LLTProj} as well as for MLCLT to hold.

The remaining part of the paper consists of five sections. 
Sections \ref{sec2} and~\ref{sec3} study suspension flows over abstract spaces under the assumptions
that the ergodic sums satisfy the MLCLT. Section \ref{sec2} provides a characterization of mixing in this setting
while Section \ref{sec3}  
contains some abstract assumptions implying various
versions of the MLCLT.
In Sections \ref{secpoly} and \ref{ScHYT}
we  verify our abstract assumptions for certain systems
admitting Young towers. Section \ref{secpoly} deals with expanding Young towers and allows
to obtain MLCLT for suspension flows over non-invertible systems.
In Section \ref{ScHYT} we extend the results of Section \ref{secpoly}
to invertible systems admiting a Young tower. 
In Section \ref{secex} we present several 
specific examples satisfying our assumptions: reward renewal processes, Axiom A flows, Sinai billiard with finite horizon,
suspensions over Pomeau-Maneville maps and geometric Lorenz attractors.

\section{Local Limit Theorem and Mixing.}
\label{sec2}

\subsection{Definitions}

We start with some notations and definitions.

\begin{notation}
\begin{enumerate}
 \item $S_{f} (n,x) = \sum_{k=0}^{n-1} f (T^k x)$ for some function $f: X \rightarrow \mathbb R^d$.
 \item $Leb_{d}$ denotes the $d$ dimensional Lebesgue measure. 
 \item $\fg_{\Sigma}$ denotes the centered Gaussian density with covariance matrix $\Sigma$.
 \item Given a closed subgroup $V$ of $\mathbb R^d$,  $u_V$ denotes the Haar measure on 
$V$ (Lebesgue times counting measure), normalized so that 
$$u(\{ z: \|z\| \leq R \}) 
\sim Leb_{d}(\{ z: \|z\| \leq R \})\text{ as }
R \rightarrow \infty. $$
\end{enumerate}
\end{notation}

\begin{definition}
\label{def:vague}
 Let $\rho_n$ and $\rho$ be 
locally finite measures on some 
Polish space $E$.
Then $\rho_n$
converges vaguely to $\rho$ if $\rho_n(f) \rightarrow \rho(f)$ 
for every compactly supported continuous $f$ (or equivalently 
$\rho_n(\mathcal H) \rightarrow \rho(\mathcal H)$ for every $\mathcal H \subset E$
with $\rho(\partial \mathcal H)=0$). If $f$ is only assumed to be bounded and continuous, then the convergence
is called weak convergence.
\end{definition}

\begin{definition}
\label{def:MLCLT}
We say that $(T,f)$ satisfies the MLCLT ($f: X \rightarrow \mathbb R^d$
is square integrable) if there are some functions $g$ and $h$, where $h$ is bounded and $\nu$-almost everywhere
continuous, such that
\begin{equation}
\label{deccoh}
f = g - h + h \circ T, 
\end{equation}
a closed subgroup $M$ of $\mathbb R^d$, a translation $r \in \mathbb R^d / M$
and a positive definite matrix $\Sigma$ 
such that, as $n \rightarrow \infty$, the following holds for any bounded and
continuous $\mathfrak x, \mathfrak y: X \rightarrow \mathbb R$ and for any continuous
and compactly supported $\mathfrak z : \mathbb R^d \rightarrow \mathbb R,$
for any sequence $w_n$ satisfying
$$w_n \in M+ nr, \| w_n - w \sqrt n \| \leq K$$
we have
\begin{equation}
\label{MLCLTtestf}
n^{d/2} \int \mathfrak x (x) \mathfrak y(T^n x) \mathfrak z (S_{g}(n,x) - n \nu(g) -w_n) d\nu(x) 
\end{equation}
$$\rightarrow \fg_{\Sigma} (w) \int \mathfrak x d\nu \int \mathfrak y d\nu \int \mathfrak z d u_M $$
and the convergence is uniform once $K$ is fixed and $w$ is chosen from a compact set. 
\end{definition}

\begin{definition}
\label{def:nd}
We say that $f:X \rightarrow \mathbb R^d$ is non-degenerate if for any $g$ cohomologous to $f$ (i.e. satisfying
\eqref{deccoh} with some measurable $h$), 
the minimal translated subgroup supporting $g$ is $d$-dimensional.
That is, if $L$ is subgroup of $\mathbb R^d$
and $a\subset \mathbb R^d$ is a vector such that
$g\in a+L$ then $L$ is  $d$-dimensional.
\end{definition}

We remark that a slight generalization of the MLCLT would allow dim$(M) < d$ and would naturally accommodate
degenerate observables. We do not consider this case here.

If $f$ satisfies the MLCLT with $h=0$, we say that $f$ is {\it minimal} (this is the case e.g. if $M = \mathbb R^d$). 
For non-minimal functions, the limit
may not be a product measure, but we have instead the following lemma which is a consequence of the continuous 
mapping theorem (recall that $h$ is bounded and almost everywhere continuous).

\begin{lemma}
\label{lemma:nonmin}
Assume that $(T,f)$ satisfies the MLCLT. Then with the notation of Definition \ref{def:MLCLT}, 
\begin{align*}
& \lim_{n\rightarrow \infty}  n^{d/2} \int \mathfrak x (x) \mathfrak y(T^n x) \mathfrak z (S_{f}(n,x) - n \nu(f) -w_n) d\nu(x) \\
& = 
\fg_{\Sigma} (w) \int 
\mathfrak x (x) \mathfrak y (y) \mathfrak z (z - h(x) + h(y)) 
d(\nu(x)  \times u_M(z) \times \nu(y)).
\end{align*}
\end{lemma}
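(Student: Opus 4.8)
\emph{Proof strategy.} The starting point is the telescoping identity: by \eqref{deccoh} and $T$-invariance of $\nu$,
\[
S_f(n,x) = S_g(n,x) - h(x) + h(T^n x), \qquad \nu(f) = \nu(g).
\]
Hence, writing $\zeta_n(x) := S_g(n,x) - n\nu(g) - w_n$ and $\Psi(x,z,y) := \mathfrak x(x)\,\mathfrak y(y)\,\mathfrak z\big(z - h(x) + h(y)\big)$, the left-hand side of the Lemma equals $\lambda_n(\Psi)$, where $\lambda_n$ is the push-forward of the measure $n^{d/2}\nu$ under $x\mapsto (x,\zeta_n(x),T^n x)$, a locally finite measure on $X\times\mathbb R^d\times X$. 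I would first record two consequences of Definition \ref{def:MLCLT}: applied with the given $\mathfrak x,\mathfrak y$ and with an \emph{arbitrary} continuous compactly supported $\mathfrak z$, it says exactly that $\lambda_n \to \lambda := \fg_\Sigma(w)\,(\nu\otimes u_M\otimes\nu)$ when tested against product functions $\mathfrak x(x)\mathfrak z(z)\mathfrak y(y)$; and, taking $\mathfrak x\equiv\mathfrak y\equiv 1$ and $\mathfrak z$ a bump, $\sup_n \lambda_n(X\times K\times X)<\infty$ for every compact $K\subset\mathbb R^d$. Since $h$ is bounded, $\Psi$ and the integrand of $\lambda(\Psi)$ are supported in $X\times B_R\times X$ with $B_R$ the ball of radius $R := 2\|h\|_\infty + \sup_{z\in\mathrm{supp}\,\mathfrak z}\|z\|$, so only this fixed compact range of $z$ matters, and $\lambda(\Psi)$ is precisely the right-hand side of the Lemma.

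The mechanism is the continuous mapping theorem applied to $\psi(x,z,y):=(x, z-h(x)+h(y), y)$, which moves points by a bounded amount in the $z$-direction only and is continuous at every $(x,z,y)$ at which $h$ is continuous at both $x$ and $y$ — a set of full $\lambda$-measure, since $h$ is $\nu$-a.e. continuous and the $x$- and $y$-marginals of $\lambda$ are $\nu$. If $h$ were continuous everywhere this would finish the proof at once: one writes $\mathfrak z(z-h(x)+h(y)) = F(h(x),z,h(y))$ with $F(a,z,b):=\mathfrak z(z-a+b)$ continuous on the Euclidean compact $\bar B_{\|h\|_\infty}\times\bar B_R\times\bar B_{\|h\|_\infty}$, approximates $F$ uniformly by sums $\sum_i p_i(a)q_i(z)r_i(b)$ (Stone--Weierstrass), reinserts a fixed cutoff in $z$, and thereby exhibits $\Psi$ as a uniform limit on $X\times B_R\times X$ of product test functions; the bound $\sup_n\lambda_n(X\times K\times X)<\infty$ then lets one pass the MLCLT through the uniform approximation to get $\lambda_n(\Psi)\to\lambda(\Psi)$.

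The only real obstacle is that $h$ is merely $\nu$-a.e. continuous, so this reduction must be localized. My plan: by Lusin's and Tietze's theorems pick continuous $h_k$ with $\|h_k\|_\infty\le\|h\|_\infty$ and $\nu(\{h\ne h_k\})\le 2^{-k}$ (so $h_k\to h$ $\nu$-a.e.), and set $\Psi_k(x,z,y):=\mathfrak x(x)\mathfrak y(y)\mathfrak z(z-h_k(x)+h_k(y))$. For each fixed $k$ the previous paragraph gives $\lambda_n(\Psi_k)\to\lambda(\Psi_k)$, while $\lambda(\Psi_k)\to\lambda(\Psi)$ by dominated convergence (the integrand is dominated by $2\|\mathfrak x\|_\infty\|\mathfrak y\|_\infty\|\mathfrak z\|_\infty\mathbf 1_{B_R}(z)$ and converges $\lambda$-a.e.). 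It then remains to control $\sup_n|\lambda_n(\Psi-\Psi_k)|$. Using uniform continuity of $\mathfrak z$ and compact support in $z$, on $X\times B_R\times X$ one has $|\Psi-\Psi_k|\le\|\mathfrak x\|_\infty\|\mathfrak y\|_\infty\big(\eps + 2\|\mathfrak z\|_\infty(\mathbf 1_{A_k}(x)+\mathbf 1_{A_k}(y))\big)$ for a set $A_k\subseteq\{h\ne h_k\}$ that moreover satisfies $\nu(\partial A_k)=0$ (shrink the threshold defining $A_k$; admissible for all but countably many thresholds, since $|h-h_k|$ is $\nu$-a.e. continuous). So it suffices to bound $n^{d/2}\nu(\zeta_n\in B_R)$, $n^{d/2}\nu(\zeta_n\in B_R,\ x\in A_k)$ and $n^{d/2}\nu(\zeta_n\in B_R,\ T^n x\in A_k)$; each follows from the MLCLT, in the last two cases after replacing $\mathbf 1_{A_k}$ by a Urysohn function lying between $\mathbf 1_{\overline{A_k}}$ and the indicator of a slightly larger open set of nearly the same $\nu$-measure (here outer regularity of $\nu$ and $\nu(\partial A_k)=0$ are used). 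This yields $\limsup_n|\lambda_n(\Psi-\Psi_k)|\le C(\eps+2^{-k})$ with $C$ independent of $k,\eps$; letting $k\to\infty$ and then $\eps\to 0$ gives $\lambda_n(\Psi)\to\lambda(\Psi)$, the asserted identity. Finally, since the MLCLT input and every estimate above are uniform for $w$ in a compact set, so is the conclusion.

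The step I expect to require the most care is precisely this last one: converting the ``$\nu$-a.e. continuous'' hypothesis on $h$ into honest quantitative control, given that Definition \ref{def:MLCLT} only furnishes \emph{continuous} test functions in the $X$-variables (so indicators of $A_k$ must be squeezed between continuous functions). Everything else is the routine bookkeeping hidden behind the phrase ``consequence of the continuous mapping theorem''.
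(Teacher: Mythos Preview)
Your proof is correct and is precisely the argument the paper has in mind: the paper offers no details beyond the single sentence ``which is a consequence of the continuous mapping theorem (recall that $h$ is bounded and almost everywhere continuous)'', and your write-up is a careful unpacking of exactly that claim, using the same telescoping identity $S_f(n,x)=S_g(n,x)-h(x)+h(T^nx)$ and then pushing the MLCLT for $g$ through the $\nu$-a.e.\ continuous map $(x,z,y)\mapsto(x,z-h(x)+h(y),y)$. The Stone--Weierstrass reduction via $F(a,z,b)=\mathfrak z(z-a+b)$ on a Euclidean compact, together with the Lusin/Tietze approximation of $h$ and the Urysohn sandwich for the bad sets $A_k$, is a perfectly standard way to make the continuous mapping theorem rigorous in this mixed vague/weak setting; note also that once you have arranged $\nu(\partial A_k)=0$ you could alternatively invoke Remark~\ref{remsets} directly instead of passing through Urysohn functions.
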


Observe that by Lemma \ref{lemma:nonmin} if $(T,f)$ satisfies the 
MLCLT, then $M$ and $r$ are uniquely defined. Indeed, if $f = g_1 - h_1 + h_1 \circ T
= g_2 - h_2 + h_2 \circ T$ with $g_1, g_2$ minimal, 
then applying Lemma \ref{lemma:nonmin} with $g_1$, $g_2$ and $\mathfrak x=\mathfrak y=1$, we find
$M_i + r_i = supp( (u_{M_{3-i}} + r_{3-i})*dist(h_{3-i})*dist(h_{3-i})) \supset 
M_{3-i} + r_{3-i}$ for $i=1,2$,
where $dist$ means distribution and $*$ is convolution. We will use the notation $M(f)$ and $r(f)$.

 \begin{remark}
 \label{remsets}
By standard arguments concerning vague convergence (sometimes called Portmanteau theorem) one can
give equivalent formulations of Definition \ref{def:MLCLT} and Lemma \ref{lemma:nonmin}.
We will use these versions for convenience.
 
 Definition \ref{def:MLCLT}  remains unchanged if we choose $\mathfrak x, \mathfrak y $
 to be indicators of sets $A,B$ 
 whose boundary has $\nu$--measure zero and $\mathfrak z$ 
 to be an indicator
 of a bounded set $\mathcal H$ whose boundary, w.r.t. the topology on $M$ has 
 $u_M$--measure zero.
For fixed $A,B,$  we can think about the MLCLT as 
 vague convergence of measures.

Similarly, Lemma \ref{lemma:nonmin}  remains valid if we consider the indicator 
test functions as above, now also satisfying
\begin{equation*}
\nu(\partial A) = \nu(\partial B) = (\nu \times \nu) ((x,y): u_M (\partial (\mathcal H + h(x) - h(y) \cap M)) >0 ) = 0.
\end{equation*}
 \end{remark}

\subsection{Characterization of mixing.}

We impose the following hypotheses.

\begin{enumerate}
 \item[(\bf H1)] $(T, \tau)$ satisfies the MLCLT.
 \item[(\bf H2)] ({\em Moderate deviation bounds})
For some (and hence for all) $R$ large enough, we have
$$
\lim_{K \rightarrow \infty} \limsup_{ w \rightarrow \infty} 
\sum_{n:  | n-w| > K \sqrt{ w}} \nu (x: S_{\tau}(n,x) \in [w \nu(\tau)- R, w \nu(\tau)+ R]) 
= 0,
$$
\end{enumerate}

\begin{proposition}
\label{prop1}
Assume {\bf (H1)} and  {\bf (H2)}. Then the following are equivalent
\begin{enumerate}
\item[(a)] $\Phi$ is weakly mixing 
\item[(b)] $\Phi$ is mixing 
\item[(c)] either $M(\tau) =\mathbb R$ or $M(\tau) = \alpha \mathbb Z$ and $r(\tau) / \alpha \notin 
\mathbb Q$.
\end{enumerate}
\end{proposition}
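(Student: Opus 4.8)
The plan is to establish the cycle $(b)\Rightarrow(a)\Rightarrow(c)\Rightarrow(b)$. The implication $(b)\Rightarrow(a)$ is immediate, so the content lies in $(c)\Rightarrow(b)$ and in the contrapositive of $(a)\Rightarrow(c)$, namely: if $M(\tau)=\alpha\mathbb Z$ and $r(\tau)/\alpha\in\mathbb Q$ then $\Phi$ is not even weakly mixing. Both are read off from one computation, the asymptotics as $t\to\infty$ of the flow correlation $\mu(A\cap\Phi^{-t}B)$ for product sets $A=A'\times[a_1,a_2]$, $B=B'\times[c_1,c_2]$ with short fibres (shorter than $\inf\tau>0$) and with $\nu$-null boundaries of $A',B'$; such sets generate, so treating them suffices.

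Unfolding the suspension, once $c_2-c_1<\inf\tau$ one has $\mathbf{1}_B(\Phi^t(y,s))=\sum_{n\ge0}\mathbf{1}_{B'}(T^n y)\,\mathbf{1}\{S_\tau(n,y)\in[s+t-c_2,\,s+t-c_1]\}$, the constraint $s+t<S_\tau(n+1,y)$ becoming automatic. Integrating in $(y,s)$ against $\mu=\nu\otimes Leb/\nu(\tau)$, $\mu(A\cap\Phi^{-t}B)$ equals $\nu(\tau)^{-1}$ times an integral over $s\in[a_1,a_2]$ of $\sum_n\nu\big(y\in A',\ T^n y\in B',\ S_\tau(n,y)\in[s+t-c_2,s+t-c_1]\big)$. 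By \textbf{(H2)} the sum truncates to $|n-t/\nu(\tau)|\le K\sqrt t$, up to an error vanishing as first $t\to\infty$ and then $K\to\infty$. On this window \textbf{(H1)} applies through Lemma \ref{lemma:nonmin} in its indicator form (Remark \ref{remsets}): the $n$-th surviving term equals, to leading order, $n^{-1/2}\fg_\Sigma\big((t-n\nu(\tau))/\sqrt n\big)\,\nu(A')\,\nu(B')$ times the $u_{M(\tau)}$-measure of the translated interval, the bounded, $\nu$-a.e.\ continuous coboundary $h$ only smearing that interval in a way that is integrated out at the end.

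In the continuous case $M(\tau)=\mathbb R$, $u_{M(\tau)}=Leb_1$, so the $n$-th term carries a factor equal to the interval length; as $\fg_\Sigma$ is slowly varying over a window of width $\sqrt t$ and the $n$'s are consecutive integers, the truncated sum is a Riemann sum converging, after the $s$-integration and the normalizations, to $\mu(A)\mu(B)$: mixing. In the discrete case $M(\tau)=\alpha\mathbb Z$, the $u_{\alpha\mathbb Z}$-measure of an interval of length $\ell$ is $\alpha$ times the number of points of $\alpha\mathbb Z$ in it, which oscillates between $\lfloor\ell/\alpha\rfloor$ and $\lceil\ell/\alpha\rceil$ according to the position modulo $\alpha$ of the coset $n\,r(\tau)+\alpha\mathbb Z$ on which $S_\tau(n,\cdot)$ concentrates. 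If $r(\tau)/\alpha\notin\mathbb Q$, Weyl's criterion gives equidistribution of $\{n\,r(\tau)\bmod\alpha\}$ along the window; together with the slowly varying Gaussian weights the count self-averages to $\ell/\alpha$, so the $u_{\alpha\mathbb Z}$-measure averages to $\ell=Leb_1$ of the interval and the discrete sum has the same limit as in the continuous case — again mixing, which proves $(c)\Rightarrow(b)$. If $r(\tau)/\alpha=p/q\in\mathbb Q$, then $n\,r(\tau)\bmod\alpha$ cycles through finitely many values, the count does not self-average, and for suitable short $A,B$ the correlation $\mu(A\cap\Phi^{-t}B)$ converges to a nonconstant periodic function of $t$. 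Since weak mixing would force $\frac{1}{R}\int_0^R|\mu(A\cap\Phi^{-t}B)-\mu(A)\mu(B)|\,dt\to0$ as $R\to\infty$, which fails for a nonconstant periodic limit, $\Phi$ is not weakly mixing; equivalently, in this regime $\tau$ is cohomologous through $h$ to a function valued in the lattice $\tfrac{\alpha}{q}\mathbb Z$, yielding the flow eigenfunction $e^{2\pi i q h/\alpha}$ with eigenvalue $2\pi q/\alpha$. This gives $(a)\Rightarrow(c)$ and closes the cycle.

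The routine parts — the exact unfolding of $\mu(A\cap\Phi^{-t}B)$, the reduction to the generating algebra of short product sets, and discarding the \textbf{(H2)} tail — I would dispatch quickly. The main obstacle is interchanging the sum over $n$ with the two limiting mechanisms in the discrete case: Lemma \ref{lemma:nonmin} must be applied with uniformity in $n$ across the $\sqrt t$-window (precisely the uniformity for $w$ in compacta with $\|w_n-w\sqrt n\|$ bounded that is built into \textbf{(H1)}), and the Weyl equidistribution of $n\,r(\tau)\bmod\alpha$ must be coupled quantitatively with the slowly varying Gaussian weights so that the self-averaging error is $o(1)$. A secondary nuisance is the coboundary smearing: one must verify that convolving $u_{M(\tau)}$ with the law of $h(x)-h(y)$ still returns exactly $\mu(A)\mu(B)$ after the height integrations, which is where the boundedness and $\nu$-a.e.\ continuity of $h$ and the null-boundary conventions of Remark \ref{remsets} are used.
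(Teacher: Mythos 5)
Your proof of (c)$\Rightarrow$(b) is essentially the paper's: unfold the suspension over product sets, truncate the sum over $n$ to the $K\sqrt t$ window using {\bf (H2)}, apply {\bf (H1)} via Lemma \ref{lemma:nonmin} in the indicator form of Remark \ref{remsets}, pass from the Riemann sum to the integral in the continuous case, and in the case $M(\tau)=\alpha\mathbb Z$, $r/\alpha\notin\mathbb Q$ average $u_{\alpha\mathbb Z}(\cdot+\varkappa_n)$ over blocks of the window by Weyl equidistribution against the slowly varying Gaussian weights --- exactly the paper's decomposition \eqref{eq:Rie00}--\eqref{eq:Rie000}. You also correctly identify the two delicate points (uniformity of the MLCLT across the window, and the coupling of Weyl averaging with the Gaussian weights).

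For $\neg$(c)$\Rightarrow\neg$(a) you deviate slightly: the paper writes $\tau=h_\tau-h_\tau\circ T+r+\alpha g$ and builds the cross-section $C=\{t=h_\tau(x)\}$, whose return times lie in the discrete group $b\mathbb Z=\langle r,\alpha\rangle$, so that $\mu(C_\delta\cap\Phi^{-t}C_\delta)=0$ for $t$ in the gaps $kb+(\delta,b-\delta)$; you instead exhibit the eigenfunction $e^{2\pi i(s+h_\tau(x))/b}$ with eigenvalue $2\pi/b$. These are equivalent certificates built from the same cohomological identity, and your eigenfunction argument is complete. One caution on your \emph{first} route to non-weak-mixing (a nonconstant periodic limit of $\mu(A\cap\Phi^{-t}B)$ for ``suitable short'' product sets $A\times I$, $B\times J$): after Lemma \ref{lemma:nonmin} the limit is convolved with the law of $h_\tau(x)-h_\tau(y)$, and if $h_\tau$ happens to equidistribute modulo $b$ this convolution can render the limit constant in $t$ for \emph{every} product set, so product sets need not witness the obstruction. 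This is precisely why the paper tilts the test set by $h_\tau$ (the set $C_\delta$ is not a product set in the suspension coordinates). Since you also give the eigenfunction, your proof stands, but the product-set correlation argument alone would have a gap.
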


Clearly (b) implies (a). To prove the proposition 
we first show in Section \ref{SSNWM} that if (c) fails  then the flow is not weak mixing,  
and then we show  in Section \ref{SSNAMix} that (c) implies~(b).

\subsection{Non-mixing case.}
\label{SSNWM}
Assume that either $\tau$ is a coboundary or  $M(\tau) = \alpha \mathbb Z$ and $r/\alpha \in \mathbb Q$, 
where $r= r(\tau)$.
In both cases, there is some $h_{\tau} $, some rationally related numbers $\alpha, r$ and a 
function $g: X \rightarrow \mathbb Z$ such that
\begin{equation}
\label{eq:50}
\tau(x) = h_{\tau}(x) - h_{\tau}(Tx) + r + \alpha g(x) 
\end{equation}
(If $\tau$ is a coboundary, then $\alpha = 0$.) Note that $h_{\tau}$ 
is defined up to an additive constant. 
Choose this constant in such a way that $\nu (Y) >0$, where 
$Y=\{ x: 0<h_{\tau}(x)<\tau(x)\}$ and define 
$C = \{(x,t): t = h_{\tau}(x) \in [0, \tau(x)]\}$.
Let $\varsigma:Y \rightarrow \mathbb R_+$ be defined by
$$\varsigma (x) = \min_{s>0} \{ \Phi^s (x,h_{\tau}(x)) \in C\} $$
Then \eqref{eq:50} gives
$$
\varsigma (x) = -nr + \alpha \sum_{i=1}^n g(T^{i-1}x)
$$ 
where
$n=n(x)$ is the number of hits of the roof before time $\varsigma(x)$.
Thus $\varsigma(x) \in  G$, the group generated by $r$ and $\alpha$. 
Since $\alpha$ and $r$ are rationally related,
there is some $b>0$ such that $G = b \mathbb Z$. 
Let us fix some $\delta \in (0, b/2)$ and write
$c_{\delta} = \{(x,t): h_{\tau}(x) \in [0, \tau(x)], |t-h_{\tau}(x)| < \delta\}$. 
By construction, $\mu (C_{\delta} \cap \Phi^{-t}C_{\delta}) = 0$ whenever 
$t \in k b + (\delta, b - \delta)$. This shows that $\Phi$ is not weakly mixing.

\subsection{Mixing case}
\label{SSNAMix}
Assume that $M(\tau) = \mathbb R$ or $M(\tau) = \alpha \mathbb Z$
and the shift $r=r(\tau)$ associated with $\tau$ satisfies $r \notin \mathbb Q(\alpha)$. 
We use the formulation of LCLT given in Remark \ref{remsets}.

It is enough to show that 
\begin{equation}
\label{eq:mixing}
 \lim_{t \rightarrow \infty} \mu(\mathcal A \cap \Phi^{-t} \mathcal B)  = \mu(\mathcal A) \mu(\mathcal B).
\end{equation}
in case $\mathcal A = A \times I$, $\mathcal B = B \times J$.
Decompose
$$ \tau (x) = \hat \tau (x) - h_{\tau}(x) + h_{\tau}(Tx)$$
where $h_\tau$ is given in \eqref{deccoh}.
Let us fix some $s \in I$ and write
\begin{equation}
\label{DefNu}
N_{u} = N_{u}(x) = \max \{ n: S_{\tau} (n,x) \leq u\}.
\end{equation}
Then
\begin{eqnarray*}
&&\mu(\mathcal A \cap \Phi^{-t} \mathcal B)  \\
&=&\frac{1}{\nu(\tau)}\int_{s \in I} \nu \bigg(  
x \in  A: S_{\tau}(N_{t+s},x) -t \in - J +s, T^{N_{t+s}(x,s)} (x) \in B \bigg) ds\\
 \end{eqnarray*}
For a fixed $s$, let $C(s)$ be the set of points $(x,z,y) \in X \times \mathbb R \times X$ that satisfy
\begin{enumerate}
 \item $x \in A$,
 \item $y \in B$,
 \item $z \in - J +s + h_{\tau}(x) - h_{\tau}(y)$.
\end{enumerate}
Then we have
\begin{eqnarray*}
&&\mu(\mathcal A \cap \Phi^{-t} \mathcal B) \\
&=& \int_{s \in I} \frac{1}{\nu(\tau)} \nu \bigg(  
x:
\big(
x, 
S_{\hat \tau}(N_{t+s},x) -t, 
T^{N_{t+s}} (x)
\big) \in C(s)
 \bigg) ds
\end{eqnarray*}
Let 
 $$
 C_n(s) = \{ x: 
 \big(
x, 
S_{\hat \tau}(n,x) -t, 
T^{n} (x)
\big) \in C(s) \}.
 $$
Observe that $x \in C_n(s)$ implies $t+s - S_{\tau}(n,x) \in J$ and consequently $N_{t+s}(x) = n$.
Thus 
$$
\mu(\mathcal A \cap \Phi^{-t} \mathcal B) = 
 \int_{s \in I} \frac{1}{\nu(\tau)} \nu (x: x \in C_{N_{t+s}} (s)) ds = 
\int_{s \in I}  \sum_{n=1}^{t/\inf \tau } \frac{1}{\nu(\tau)} \nu (C_{n} (s)) ds
$$
We write the above sum as $S_1+ S_2$, where
$$
S_1 = \sum_{n: |n-t/\nu(\tau)| < K \sqrt t} \int ...
$$
with $K \gg 1$ and $S_2$ is an error term (which is small by {\bf (H2)}). Now let us apply
{\bf (H1)} (and Lemma \ref{lemma:nonmin}) to compute $S_1$.

Assume first that $M(\tau) = \mathbb R$. Observe that
$(\nu \times Leb_1 \times \nu)(C(s)) = \nu(A) |J| \nu(B)$.
Thus {\bf (H1)} and dominated convergence give that for any fixed $K$
\begin{equation}
\label{eq:Rie00}
S_1 \sim \sum_{n: |n-t/\nu(\tau)| < K \sqrt t} \frac{1}{\nu(\tau) \sqrt n} 
\fg_{\sigma} \left( \frac{m}{\sqrt n} \nu(\tau) \right)
\nu(A) |J| \nu(B) |I|,
\end{equation}
where $\sigma$ is the variance of $\tau$ and $m = \lfloor t/\nu(\tau) \rfloor -n$. 
Substituting a Riemann sum with the Riemann integral, we find that 
$$ S_1 \sim \mu( \mathcal A) \mu( \mathcal B)(1+o_{K \rightarrow \infty}(1)). $$
On the other hand by {(\bf H2}) $S_2\to 0$ as $K\to\infty$ uniformly in $n.$
This proves \eqref{eq:mixing}.

In case $M(\tau) = \alpha \mathbb Z$, $r \notin \mathbb Q(\alpha)$ we apply a similar approach with the 
following differences. Note that \eqref{eq:Rie00} is replaced by
\begin{equation}
\label{eq:Rie000}
\sum_{n: |n-t/\nu(\tau)| < K \sqrt t} \frac{1}{\nu(\tau) \sqrt n} 
\fg_{\sigma} \left(  \frac{m}{\sqrt n} \nu(\tau) \right)
\int_{s \in I} (\nu \times u_{\alpha \mathbb Z} \times \nu) (C(s) + \varkappa_n) ds
\end{equation}
where $\varkappa_n \in \mathbb R /(\alpha \mathbb Z)$ is defined by
$$
\varkappa_n = t - n r  \;(\bmod\; \alpha)
$$
and $C(s) + \varkappa$ is defined as
$$\{ (x,z+\check \varkappa,y): (x,z,y) \in C(s)\},$$
with some $\check \varkappa \in \mathbb R$, $\check \varkappa + \alpha \mathbb Z = 
\varkappa$ (since $u$ is the Haar measure, 
\eqref{eq:Rie000} does not depend on the choice of the representative).
Now writing the sum in \eqref{eq:Rie000} as
$$
\sum_{j=0}^{2 K \sqrt t /N -1} \sum_{m=-K \sqrt t + jN}^{-K \sqrt t + (j+1)N -1},
$$
for some large $N$ and using Weyl's theorem, we conclude that \eqref{eq:Rie00} still holds. 
Then we can complete the proof as before.

\section{From LLT for base map to LLT for flows.}
\label{sec3}

\subsection{Definitions, assumptions}

Given an observable 
$\bm \varphi: \Omega \rightarrow \mathbb R$ let
\begin{equation}
\label{FlowObs-SectionObs}
\check{\bm \varphi}(x) = \int_0^{\tau(y)} \bm \varphi (x,s) ds.
\end{equation}

We impose the following
hypotheses.

\begin{enumerate}
 \item[(\bf H3)] $\Phi$ is mixing.
\item[(\bf H4)]  $\mu (\bm \varphi) =0.$
\item[(\bf H5)] $\bm \varphi$ is bounded and $\mu$-almost everywhere continuous.
\item[(\bf H6)] $(T, (\check{\bm \varphi},\tau ))$ satisfies the MLCLT.
\item[(\bf H7)] ({\em Moderate deviation bounds})
For $f = (\tau,\check{\bm \varphi} )$ and for some 
(and hence for all) $R$ large enough, we have
$$
\lim_{K \rightarrow \infty} \limsup_{ w \rightarrow \infty}  w^{1/2} 
\sum_{n:  | n-w| > K \sqrt{ w}} \nu (x: S_f(n,x) \in B(w \nu(f), R)) = 0,
$$
where $B(v,R)$ is the ball of radius $R$ centered at $v \in \mathbb R^d$ 
\end{enumerate}

 Now we define the MLCLT for the flow for $d$ dimensional observables 
 (most of this paper is about the case $d=1$).

\begin{definition}
\label{defMLCLT}
We say that $(\Phi, \bm \varphi)$ ($\bm \varphi: \Omega \rightarrow \mathbb R^d$ square integrable)
satisfies the MLCLT if
there exists some closed subgroup 
$\mathcal V \subset \mathbb R^d$, $R \in \mathbb R^d / \mathcal  V$,
a $\mu$-almost everywhere continuous
function $H: \Omega \rightarrow \mathbb R^d $, bounded on $\{ (x,s)\in \Omega, s \leq M \}$ for all $M >0$
and a positive definite matrix 
$\Sigma = \Sigma(\bm \varphi)$ such that, as $t \rightarrow \infty$, the following holds for any $M >0$, any bounded and
continuous $\mathfrak X, \mathfrak Y: \{ (x,s)\in \Omega, s \leq M \} \rightarrow \mathbb R$ and for any continuous
and compactly supported $\mathfrak Z : \mathbb R^d \rightarrow \mathbb R$:
\begin{align*}
t^{d/2} \int \mathfrak X (x,s) \mathfrak Y( \Phi^t (x,s)) \mathfrak Z \left( \mathcal F(x,s) \right) d\mu(x,s) 
\rightarrow \fg_{\Sigma} (W) \int \mathfrak X d\mu \int \mathfrak Y d\mu \int \mathfrak Z d u_{\mathcal  V}.
\end{align*}
Here, 
\begin{equation}
\label{defF}
\mathcal F(x,s) =
\int_0^t \bm \varphi(\Phi^{s'}(x,s))ds' + H(x,s) - H(\Phi^t(x,s)) - W(t),
\end{equation}
and
$W(t)$ is assumed to satisfy
\begin{equation}
\label{condMLCLT}
W(t) \in \mathcal V + Rt, | W(t) - W \sqrt t | \leq K.
\end{equation}
We also require that the convergence in uniform once $K$ is fixed and $W$ is chosen from a compact set. 
\end{definition}
 
 We can get rid of the coboundary term similarly to the case of the map 
(cf. Lemma \ref{lemma:nonmin}).

\begin{lemma}
\label{lemma:nonminH0}
Assume that $(\Phi, \bm \varphi)$ 
satisfies the MLCLT. Then with the notation of Definition \ref{defMLCLT},
\begin{align}
& \lim_{t \rightarrow \infty} t^{d/2} 
\int \mathfrak X (x,s) \mathfrak Y( \Phi^t (x,s)) 
\mathfrak Z \left( \int_0^t \bm \varphi(\Phi^{s'}(x,s))ds'   - W(t) \right) d\mu(x,s) \nonumber \\
& = 
\fg_{\Sigma} (W) \int
\mathfrak X (x,s) \mathfrak Y( y,s')
\mathfrak Z (z - H(x,s) + H(y,s') )
d(\mu(x,s)  \times u_{\mathcal V}(z) \times \mu(y,s')). \label{MLCLTnonmimH0}
\end{align}
\end{lemma}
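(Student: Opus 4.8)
The plan is to deduce Lemma \ref{lemma:nonminH0} from Definition \ref{defMLCLT} by a continuous mapping / change-of-variables argument that absorbs the coboundary term $H(x,s)-H(\Phi^t(x,s))$, exactly paralleling the passage from Definition \ref{def:MLCLT} to Lemma \ref{lemma:nonmin} in the discrete setting. The starting point is the observation that
\[
\int_0^t \bm\varphi(\Phi^{s'}(x,s))\,ds' - W(t)
= \mathcal F(x,s) - H(x,s) + H(\Phi^t(x,s)),
\]
so that the quantity whose limit we want is obtained from \eqref{defF} simply by replacing the test function $\mathfrak Z(\cdot)$ by $\mathfrak Z(\cdot - H(x,s) + H(\Phi^t(x,s)))$. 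The difficulty is that this ``test function'' now depends on $(x,s)$ and on $\Phi^t(x,s)$, so the conclusion of Definition \ref{defMLCLT} cannot be applied verbatim.

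First I would reduce to indicator test functions, using the Portmanteau-type reformulation mentioned in Remark \ref{remsets}: it suffices to prove the convergence when $\mathfrak X = \mathbf 1_{\mathcal A}$, $\mathfrak Y = \mathbf 1_{\mathcal B}$ for sets $\mathcal A, \mathcal B \subset \{s\le M\}$ with $\mu(\partial\mathcal A)=\mu(\partial\mathcal B)=0$ and $\mathfrak Z = \mathbf 1_{\mathcal H}$ for a bounded $\mathcal H$ with $u_{\mathcal V}(\partial\mathcal H)=0$, subject to the additional null-boundary condition analogous to the one displayed after Remark \ref{remsets}. Next, since $H$ is $\mu$-almost everywhere continuous and bounded on $\{s\le M\}$, I would fix $\eps>0$ and partition (a full-measure subset of) $\{s\le M\}$ into finitely many pieces $\mathcal A = \bigsqcup_i \mathcal A_i$, $\mathcal B = \bigsqcup_j \mathcal B_j$, each of small diameter and with null boundary, on which $H$ oscillates by at most $\eps$; pick base points with values $a_i \approx H|_{\mathcal A_i}$ and $b_j \approx H|_{\mathcal B_j}$. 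Then on $\mathcal A_i \cap \Phi^{-t}\mathcal B_j$ the shifted indicator $\mathbf 1_{\mathcal H - H(x,s) + H(\Phi^t(x,s))}$ is sandwiched between $\mathbf 1_{\mathcal H^{-\eps}_{ij}}$ and $\mathbf 1_{\mathcal H^{+\eps}_{ij}}$, where $\mathcal H^{\pm\eps}_{ij}$ are the $\eps$-neighborhoods (inside $\mathcal V$) of the fixed translate $\mathcal H - a_i + b_j$.

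Applying Definition \ref{defMLCLT} to each of the finitely many triples $(\mathbf 1_{\mathcal A_i}, \mathbf 1_{\mathcal B_j}, \mathbf 1_{\mathcal H^{\pm\eps}_{ij}})$ — after a further smoothing of the indicators from above and below by continuous functions, which is harmless by vague convergence — I get that
\[
t^{d/2}\!\int \mathbf 1_{\mathcal A_i}(x,s)\,\mathbf 1_{\mathcal B_j}(\Phi^t(x,s))\,
\mathbf 1_{\mathcal H^{\pm\eps}_{ij}}\!\bigl(\mathcal F(x,s)\bigr)\,d\mu
\longrightarrow \fg_\Sigma(W)\,\mu(\mathcal A_i)\,\mu(\mathcal B_j)\,u_{\mathcal V}\bigl(\mathcal H^{\pm\eps}_{ij}\bigr),
\]
uniformly in the stated sense. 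Summing over $i,j$ and using that $u_{\mathcal V}(\mathcal H^{\pm\eps}_{ij}) \to u_{\mathcal V}(\mathcal H - a_i + b_j)$ as $\eps\to 0$ (by the boundary hypothesis on $\mathcal H$), together with dominated convergence in $(x,s)$ to recognize $\sum_{i,j}\mu(\mathcal A_i)\mu(\mathcal B_j)\,\mathbf 1[\,\cdot\in\mathcal H - a_i + b_j\,]$ as a Riemann-sum approximation of $\int \mathbf 1_{\mathcal H}(z - H(x,s) + H(y,s'))\,d(\mu\times u_{\mathcal V}\times\mu)$, yields the claimed identity \eqref{MLCLTnonmimH0} in the limit $\eps\to 0$.

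The main obstacle is purely bookkeeping rather than conceptual: one must ensure the partition pieces can be chosen with $\mu$-null boundaries and small $H$-oscillation simultaneously (this is where $\mu$-a.e.\ continuity and boundedness of $H$ on $\{s\le M\}$ are used), and one must verify that the error terms introduced by the two-sided $\eps$-sandwiching are controlled uniformly in $t$ and $W$ over the relevant compact set — so that the limits can be interchanged with the finite sum and the $\eps\to 0$ passage. Once the discrete analogue (Lemma \ref{lemma:nonmin}, via the continuous mapping theorem) is granted, this is the natural continuous-time transcription, with the only genuinely new point being that the coboundary is now evaluated at the flowed point $\Phi^t(x,s)$ rather than at $T^n x$; since $H$ is bounded on each $\{s\le M\}$ and $\mathcal Y$ is supported there, no new difficulty arises from the suspension structure.
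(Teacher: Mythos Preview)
Your argument is correct and is essentially the same as the paper's: the paper simply says that Lemma \ref{lemma:nonminH0} follows from Definition \ref{defMLCLT} ``similarly to the case of the map (cf.\ Lemma \ref{lemma:nonmin})'', and Lemma \ref{lemma:nonmin} in turn is stated as ``a consequence of the continuous mapping theorem (recall that $h$ is bounded and almost everywhere continuous)''. Your partition-and-sandwich argument is precisely a hands-on proof of the continuous mapping theorem in this setting, so there is no substantive difference in approach --- you have just unpacked what the paper leaves implicit.
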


Before proceeding to the MLCLT for the flow, we make some 
important remarks.

\begin{remark}
As Lemma \ref{lemma:nonminH0} shows, we can assume that 
$H$ take values in $\mathbb R^d / \mathcal V$.
\end{remark}

\begin{remark}
\label{bddtau}
Definition \ref{defMLCLT} becomes simpler if $\tau$ is bounded. Indeed, in this case
 $\mathfrak X$ and $\mathfrak Y$  are just any bounded and continuous functions on $\Omega$. (In general, $X$ is not compact.)
If $\tau$ is unbounded, we only consider the test functions given in Definition \ref{defMLCLT}
in the first (abstract) part of the paper. However, later we will extend the convergence to any
bounded and continuous $\mathfrak X$ and $\mathfrak Y$ in important applications 
(namely, first return map to the base in Young towers, see Proposition \ref{propunbdH}). 
\end{remark}
 
 \begin{remark}
 \label{remsets2}
 Similarly to Remark \ref{remsets} we have the following reformulations of 
Definition \ref{defMLCLT} and Lemma 
 \ref{lemma:nonminH0}.  
 
Definition \ref{defMLCLT} remains unchanged if we choose 
 $\mathfrak X, \mathfrak Y $
 to be indicators of sets $\mathcal A, \mathcal B$)
 whose boundary has $\mu$--measure zero and 
 $\mathfrak Z$ to be an indicator
 of a bounded set $\mathcal H$ whose boundary, w.r.t. the topology on $\mathcal V$ has 
 $u_{\mathcal V}$--measure zero.
 Furthermore, it suffices to consider indicators of product sets
 $\mathfrak X = 1_{\mathcal A}$, where $\mathcal A = A \times I$, $\nu( \partial A) = 0$ and $I$ is a subinterval
 of $[0, \inf \{ \tau(x), x \in A\}]$. Similarly, we can choose $\mathfrak Y = 1_{\mathcal B}$, where 
 $\mathcal B = B \times J$,  $\nu( \partial B) = 0$ and $J$ is a subinterval
 of $[0, \inf \{ \tau(x), x \in B\}]$. 

Also Lemma \ref{lemma:nonminH0} remains valid if we consider the indicator 
test functions as above, satisfying
\begin{equation*}
\nu(\partial A) = \nu(\partial B) =
\mu \times \mu ((x,s,y,s'): u_{\mathcal V} (\partial (\mathcal H + H(x,s) - H(y,s') \cap \mathcal V)) >0 ) = 0.
\end{equation*}
 \end{remark}

\subsection{The linearized group}
Consider 
$M = M(\tau,\check{\bm \varphi} )$ and $r = r(\tau,\check{\bm \varphi} )$. 
The {\it linearized group} of $(\tau,\check{\bm \varphi} )$ is the closure of the group
generated by $M$ and $r$. We denote this group by $\hat{\mathcal V}$.
Define $\varkappa_n \in \mathbb R^2/ M$ by
$$
\varkappa_n = - n r   \;(\bmod\; M)
$$
and write $ M= Y \times L$, where $Y$ is a subspace of dimension $d_1$ and 
$L$ is a lattice of dimension $d_2$ with $d_1, d_2 \in \{ 0,1,2\}$, $d_1+d_2 = 2$.
The self map of $\mathbb R^2 / M$, defined by 
$\varkappa \mapsto \varkappa - r$ is linearly conjugate
to a translation of the $d_2$ dimensional torus. Consequently, the closure of any orbit is a subtorus. Furthermore, 
the orbit is uniformly distributed on this subtorus by Weyl's theorem.
We conclude 
\begin{lemma}
\label{lem:Weyl}
$\frac1{N} \sum_{n=1}^{N} u_M(.+ \check \varkappa_n)$ converges weakly to $u_{\hat{\mathcal V}}$
as $N \rightarrow \infty$, where
$\check \varkappa_n \in \mathbb R^2$ satisfies $\check \varkappa_n + M= \varkappa_n$
\end{lemma}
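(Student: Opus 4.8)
The plan is to reduce the claim to a standard equidistribution statement for orbits of a rotation on a torus, which is exactly what Weyl's theorem provides, and then transport this back to the weak convergence of the averaged Haar measures $u_M(\,\cdot\,+\check\varkappa_n)$. First I would record the structure theorem for closed subgroups of $\mathbb R^2$: since $M$ is a closed subgroup of $\mathbb R^2$, after a linear change of coordinates we may write $M = Y \times L$ with $Y$ a subspace of dimension $d_1$ and $L \cong \mathbb Z^{d_2}$ a lattice in a complementary subspace, $d_1+d_2=2$. In these coordinates the quotient $\mathbb R^2/M$ is isomorphic as a topological group to $\mathbb T^{d_2}$ (the factor through $Y$ being trivial, the factor through $L$ being a torus), so the map $\varkappa\mapsto\varkappa - r$ on $\mathbb R^2/M$ becomes, under this identification, a translation $R_\beta$ of $\mathbb T^{d_2}$ by the image $\beta$ of $-r$. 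Then $\varkappa_n = -nr \bmod M$ corresponds to the orbit $\{n\beta : n\ge 1\}\subset\mathbb T^{d_2}$.

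Next I would invoke Weyl's equidistribution theorem for this rotation: the orbit $\{n\beta\}_{n\ge1}$ is uniformly distributed in the closure $\overline{\{n\beta\}}$, which is a closed subgroup (a subtorus) $\mathbb T' \subseteq \mathbb T^{d_2}$; concretely $\frac1N\sum_{n=1}^N \delta_{n\beta}$ converges weakly to the normalized Haar measure $m_{\mathbb T'}$ on $\mathbb T'$. Pulling this back through the identification $\mathbb R^2/M \cong \mathbb T^{d_2}$, the preimage of $\mathbb T'$ is precisely the closure of the subgroup of $\mathbb R^2/M$ generated by the image of $r$, whose preimage in $\mathbb R^2$ is the closure of the group generated by $M$ and $r$ — that is, $\hat{\mathcal V}$ by definition. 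The claimed convergence is now obtained by integrating: for a compactly supported continuous $\psi:\mathbb R^2\to\mathbb R$,
$$
\frac1N\sum_{n=1}^N \int \psi\, d\big(u_M(\,\cdot\, + \check\varkappa_n)\big)
= \int_{\mathbb R^2/M}\Big(\frac1N\sum_{n=1}^N \delta_{\varkappa_n}\Big)(d\bar\varkappa)\;\Big(\int \psi\, d\,u_M(\,\cdot\,+\check\varkappa)\Big),
$$
where one checks that $\bar\varkappa\mapsto \int\psi\,d\,u_M(\,\cdot\,+\check\varkappa)$ is a well-defined (independent of the representative, since $u_M$ is $M$-invariant) bounded continuous function on $\mathbb R^2/M$; it is compactly supported as a function on $\mathbb R^2/M$ precisely when $Y=\{0\}$, but in the general case boundedness and continuity suffice once we observe that the relevant orbit is confined to a bounded region of $\mathbb R^2/M$, so one may harmlessly multiply by a cutoff. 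Applying the equidistribution of $\frac1N\sum\delta_{\varkappa_n}$ to this test function, and identifying $\int_{\mathbb R^2/M}\big(\int\psi\,d\,u_M(\,\cdot\,+\check\varkappa)\big)\,dm_{\mathbb T'}(\bar\varkappa)$ with $\int\psi\,d\,u_{\hat{\mathcal V}}$ by uniqueness of Haar measure (up to the normalization fixed in the Notation), gives the lemma.

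The main obstacle, and the only genuinely non-formal point, is the bookkeeping around unbounded fibers: when $d_1 = \dim Y \ge 1$ the measure $u_M$ is not finite, $\mathbb R^2/M$ is not compact, and the function $\bar\varkappa\mapsto\int\psi\,d\,u_M(\,\cdot\,+\check\varkappa)$ need not have compact support, so the naive application of ``weak convergence tested against continuous compactly supported functions'' must be justified. I would handle this by noting that the translates $\check\varkappa_n$ can be chosen to lie in a bounded set (e.g. a fundamental domain for the lattice part, taking the $Y$-component to be zero), so everything in sight lives over a compact piece of $\mathbb R^2/M$ and the vague-versus-weak distinction evaporates after inserting a fixed smooth cutoff; alternatively one verifies directly that $\frac1N\sum_{n=1}^N u_M(\,\cdot\,+\check\varkappa_n)$ and $u_{\hat{\mathcal V}}$ assign the same (finite) mass to any bounded Borel set with $u_{\hat{\mathcal V}}$-null boundary, using that on each compact set only finitely many "sheets" of each Haar measure contribute. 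The remaining steps — the structure theorem for closed subgroups of $\mathbb R^2$, the identification of the closure of $\{n\beta\}$ as a subtorus, and Weyl's theorem itself — are entirely standard and I would cite them rather than reprove them.
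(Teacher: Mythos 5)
Your proof is correct and follows essentially the same route as the paper's (whose entire argument is the reduction you describe: write $M=Y\times L$, observe that $\varkappa\mapsto\varkappa-r$ on $\mathbb R^2/M$ is conjugate to a translation of $\mathbb T^{d_2}$, identify the orbit closure as a subtorus, apply Weyl's theorem, and pull back to the averaged Haar measures). One small correction to your final paragraph: since $d_1+d_2=2$ the quotient $\mathbb R^2/M\cong\mathbb T^{d_2}$ is always compact, so the only genuine issue is the infinitude of $u_M$ along its unbounded fibers when $d_1\geq 1$, and this is already handled by your (correct) observation that $\bar\varkappa\mapsto\int\psi\,d\,u_M(\cdot+\check\varkappa)$ is a well-defined bounded continuous function on the compact quotient, to which weak convergence of $\frac1N\sum_{n=1}^N\delta_{\varkappa_n}$ applies directly; no cutoff is needed.
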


Note that by Proposition \ref{prop1} and by the definition of the linearized group,
the projection of $\hat{\mathcal V}$ to the second coordinate is dense. Consequently, 
under assumptions {\bf (H1)} - {\bf (H7)},
one of the following cases holds.
\begin{enumerate}
 \item[\bf (A)] $\hat{\mathcal V} = \mathbb R^2$
 \item[\bf (B)] $\hat{\mathcal V} = a \mathbb Z \times \mathbb R$
 \item[\bf (C)] $\hat{\mathcal V} = \{ (x_1, x_2)\in \mathbb R^2: x_2 - \alpha x_1 \in \beta \mathbb Z\}$ with some $\beta \neq 0$
 \item[\bf (D)] 
 $\hat{\mathcal V}$ is generated by $(a,b)$ and $(0,d)$, where $b/d$ is irrational. We can assume 
$a,d>0$.
 \item[\bf (E)] $\hat{\mathcal V}$ is generated by $(a',b')$ and $(c',d')$, where 
 $a'/c'$ and $b'/d'$ are irrational. We can assume $d'>0$ and $a'd'-b'c'>0$.
\end{enumerate}

We can interpret cases {\bf (A)} and {\bf (B)} as arithmetic independence between 
$\check{\bm \varphi}$ and $\tau$, as
$\hat{\mathcal V} = \pi_1 \hat{\mathcal V} \times \pi_2 \hat{\mathcal V}$,
where $\pi_i$ is the projection to the $i$th coordinate.

\subsection{MLCLT for $\Phi$}
\label{sec:mainres}

To fix notations, we write the decomposition \eqref{deccoh} for the functions $\check{\bm \varphi}$
and $\tau$ as
\begin{equation}
\label{defh}
\check{\bm \varphi} (x) = \psi(x) - h(x) + h(Tx), \text{ and }
\end{equation}
\begin{equation}
\label{defh_tau}
\tau (x) = \hat \tau (x) - h_{\tau}(x) + h_{\tau}(Tx).
\end{equation}

\begin{theorem}
\label{thm1}
Assume hypotheses {\bf (H1)}-{\bf (H7)}. 
In cases {\bf (A)}-{\bf (C)}, $(\Phi, \bm \varphi)$ satisfies the MLCLT with $\mathcal V, R$ and $H$ given by:
\begin{itemize}
 \item[{\bf (A)}] $\mathcal V = \mathbb R$, $R=0$, $H(x,s) = 0$, 
\item[{\bf (B)}] $\mathcal V = a \mathbb Z$, $R=0$,
$H_B(x,s) = \int_0^{s} \bm \varphi(x,s') ds' + h(x)  \;(\bmod\; \mathcal V)$,  
\item[{\bf (C)}] $\mathcal V = \frac{\beta}{\alpha} \mathbb Z$,
$R= \frac{1}{\alpha}$,
$$H_C(x,s) = \int_0^{s} \bm \varphi(x,s') ds' + h(x)  -
 \frac{1}{\alpha} (s + h_{\tau}(x)) \;(\bmod\; \mathcal V).$$
\end{itemize}
In all the above cases, 
$\Sigma (\bm \varphi) = \frac{1}{\nu(\tau)} (\Sigma(\check{\bm \varphi}, \tau))_{11}$
\end{theorem}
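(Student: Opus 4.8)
\textbf{Proof proposal for Theorem \ref{thm1}.}

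The plan is to reduce the MLCLT for the flow to the MLCLT for the base map $(T,(\check{\bm\varphi},\tau))$ (hypothesis {\bf (H6)}) via the standard ``unfolding'' of the suspension. First I would fix test functions of the special product form allowed by Remark \ref{remsets2}: $\mathfrak X = 1_{A\times I}$, $\mathfrak Y = 1_{B\times J}$ and $\mathfrak Z$ a continuous compactly supported function. Writing a point of $\Omega$ as $(x,s)$ with $x\in A$, $s\in I$, and using $N_u(x)$ as in \eqref{DefNu} to count roof crossings, the condition $\Phi^t(x,s)\in B\times J$ becomes $T^{N}(x)\in B$ together with a constraint that $S_\tau(N,x)$ lies in a window of size $|I|+|J|$ around $t$; here $N=N_{t+s}(x)$ is essentially $\lfloor t/\nu(\tau)\rfloor$ up to $O(\sqrt t)$. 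Simultaneously, the Birkhoff-type integral $\int_0^t \bm\varphi(\Phi^{s'}(x,s))\,ds'$ telescopes, up to bounded boundary terms coming from the initial and final fibers, into the ergodic sum $S_{\check{\bm\varphi}}(N,x)$. Thus the flow integral is rewritten as an integral over $s\in I$ of a sum over $n$ near $t/\nu(\tau)$ of base-map quantities, exactly the shape treated in Section \ref{SSNAMix}.

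Next I would insert the cohomological decompositions \eqref{defh} and \eqref{defh_tau}: replacing $\check{\bm\varphi}$ by $\psi$ and $\tau$ by $\hat\tau$ introduces the coboundary corrections $h(x)-h(T^n x)$ and $h_\tau(x)-h_\tau(T^n x)$, which is precisely what the boundary fibers contribute, and this is where the claimed formulas for $H$ in cases {\bf (A)}--{\bf (C)} come from: in case {\bf (A)} the group is all of $\mathbb R$ so no correction survives and $H=0$; in case {\bf (B)} the second coordinate is continuous so only the $\check{\bm\varphi}$-direction matters, giving $H_B(x,s)=\int_0^s\bm\varphi(x,s')\,ds' + h(x)\ (\mathrm{mod}\ \mathcal V)$, the within-fiber part plus the base coboundary; in case {\bf (C)} the lattice is ``tilted'', so one must also subtract the contribution along the $\tau$-direction, $\tfrac1\alpha(s+h_\tau(x))$, producing $H_C$. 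Then, splitting the sum over $n$ as $S_1+S_2$ with $S_2$ negligible by {\bf (H7)}, I apply {\bf (H6)} together with Lemma \ref{lemma:nonmin} to each term of $S_1$: the $n$-th term converges to $\tfrac{1}{\nu(\tau)\sqrt n}\fg_{\sigma_\tau}\!\big(\tfrac{m}{\sqrt n}\nu(\tau)\big)$ times an integral of $\mathfrak Z$ against $u_M$ translated by $\varkappa_n$ (defined as in the linearized-group discussion), integrated against the measures of $A$, $B$, $I$, $J$. Here $\sigma_\tau$ is the $\tau$-variance, and one checks $\Sigma(\bm\varphi)=\tfrac{1}{\nu(\tau)}(\Sigma(\check{\bm\varphi},\tau))_{11}$ by the usual Birkhoff-sum-to-flow-time variance rescaling (the $1/\nu(\tau)$ comes from the time change $t\leftrightarrow n\nu(\tau)$).

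Finally I would pass to the limit. The sum over $n$ near $t/\nu(\tau)$ of $\tfrac{1}{\nu(\tau)\sqrt n}\fg_{\sigma_\tau}(\cdot)$ against a slowly varying integrand is a Riemann sum converging, after sending $K\to\infty$, to the Gaussian $\fg_\Sigma(W)$ in flow time; this is the computation already carried out in \eqref{eq:Rie00}. The one extra ingredient beyond Section \ref{SSNAMix} is handling the $u_M(\cdot+\check\varkappa_n)$ factors in cases {\bf (B)} and {\bf (C)}: as $n$ ranges over a long block the translations $\varkappa_n=-nr\ (\mathrm{mod}\ M)$ equidistribute by Lemma \ref{lem:Weyl}, so the block average of $\int \mathfrak Z\,d(u_M(\cdot+\check\varkappa_n))$ converges to $\int \mathfrak Z\,du_{\hat{\mathcal V}}$; after the change of variables absorbing the tilt into $H$ this is exactly $\int \mathfrak Z\,du_{\mathcal V}$ with $\mathcal V=\mathbb R$ (case {\bf A}), $a\mathbb Z$ (case {\bf B}) or $\tfrac\beta\alpha\mathbb Z$ (case {\bf C}). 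Uniformity in $W$ on compacta and in $K$ follows from the corresponding uniformity in {\bf (H6)} and {\bf (H7)}.

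\medskip

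\emph{Main obstacle.} The delicate point is the bookkeeping of the boundary fibers and the change of variables that ``straightens'' $u$ in the fibers: one must verify that the leftover terms $\int_0^s\bm\varphi(x,s')ds'$, $h(x)$, and (in case {\bf (C)}) $\tfrac1\alpha(s+h_\tau(x))$ assemble correctly modulo $\mathcal V$ into the stated $H$, that $H$ is $\mu$-a.e.\ continuous and bounded on each slab $\{s\le M\}$ (it inherits this from boundedness/continuity of $\bm\varphi$, $h$, $h_\tau$), and that the discrete translations $\varkappa_n$ interact correctly with the $\mathfrak Z$-argument after this change of variables — in particular that the Weyl-equidistribution in Lemma \ref{lem:Weyl} can be applied termwise inside the Riemann sum without disturbing the Gaussian limit, which requires the now-familiar trick of grouping the sum over $n$ into blocks of length $N$ with $1\ll N\ll\sqrt t$. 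Cases {\bf (D)} and {\bf (E)} are deliberately excluded from the statement of this theorem (they presumably require a genuinely two-dimensional equidistribution argument handled separately).
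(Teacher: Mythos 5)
Your proposal follows the paper's proof essentially step for step: the same unfolding via $N_{t+s}$ and the telescoping identity \eqref{eq:flowtomap}, the same reduction to product test sets via Remark \ref{remsets2}, the same $S_1+S_2$ split with {\bf (H7)} killing $S_2$, the same application of {\bf (H6)} plus Weyl equidistribution (Lemma \ref{lem:Weyl}) over blocks in $n$, and the same shear absorbing the $\tau$-direction into $H_C$ in Case {\bf (C)}. The only imprecision is notational — the per-$n$ factor from {\bf (H6)} is $n^{-d/2}$ with $d=2$ and the two-dimensional Gaussian $\fg_\sigma(-W\sqrt{\nu(\tau)},\,m\nu(\tau)/\sqrt n)$, which the paper then marginalizes to produce $\sigma'=\sigma_{11}$ and the $1/\nu(\tau)$ rescaling — but this is exactly the computation you defer to and does not affect the argument.
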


In case {\bf (D)}, a weaker version of Theorem \ref{thm1} holds. Namely, the limiting measure 
depends on $t \;(\bmod\; d)$ and is not a product over $\Omega \times a \mathbb Z \times \Omega$.
Let us denote by $\Xi_{t} = \Xi_{t,W(t),\bm \varphi, H}$
the push forward of the measure $\mu$ by the map
$$
(x,s) \mapsto \left( (x,s), \int_0^t \bm \varphi (\Phi^{s'} (x,s)) ds' +H(x,s) - H(\Phi^t(x,s)) -W(t), \Phi^t(x,s) \right).
$$

For simplicity, we only consider the case when
\begin{enumerate}
\item[{\bf (H8)}] $\tau$ is bounded.
\end{enumerate}

\begin{proposition}
\label{prop:pos}
Assume conditions {\bf (H1)}-{\bf (H8)} and Case {\bf (D)}. 
Recall the notation introduced in Remark \ref{remsets}.Then for any $l \in \mathbb Z$
and any $W(t) \in a \mathbb Z$ with $W(t) \sim W \sqrt t$, we have
$$
\lim_{t\rightarrow \infty} |\sqrt t \Xi_{t,W(t),\bm \varphi,H} (\mathcal A \times \{la \} \times \mathcal B)- 
\mathcal I_t | =0,
$$
where $H= \int_0^{s} \bm \varphi(x,s') ds' + h(x)  \;(\bmod\; a)$,
\begin{equation}
\label{eq:propposgen}
\mathcal I_t = \frac{ad \fg_{\Sigma}(W)}{\nu(\tau)} \sum_{|k| \leq \frac{ \| \tau \|_{\infty} + 2 \| h_{\tau}\|_{\infty} +1}{d}} 
\int_{(x,s) \in \mathcal A} \int_{y \in B} 
1_{\{\rho + kd +h_{\tau}(x) -h_{\tau}(y) \in J \}}
d \nu(y) d\mu(x,s),
\end{equation}
and 
$\rho \in [0,d)$ satisfies
\begin{equation}
\label{defrho}
\rho \equiv s+ t - \left( \frac{W(t)}{a} +l \right) b \;(\bmod\; d). 
\end{equation}

\end{proposition}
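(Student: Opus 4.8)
The plan is to follow the same reduction as in Section~\ref{SSNAMix}, reducing the flow-level statement to the map-level MLCLT {\bf (H6)} together with the moderate-deviation bound {\bf (H7)}, but now keeping track of the second coordinate $\check{\bm\varphi}$ and of the residue class of $t$ modulo $d$. First I would write $\mathcal A = A \times I$, $\mathcal B = B \times J$ as in Remark~\ref{remsets2} and unfold $\Xi_{t,W(t),\bm\varphi,H}(\mathcal A \times \{la\} \times \mathcal B)$ as an integral over $s \in I$ of a quantity of the form
$$
\nu\Bigl( x \in A:\ S_{\hat\tau}(N_{t+s},x) - t \in -J + s + h_\tau(x) - h_\tau(T^{N_{t+s}}x),\ T^{N_{t+s}}x \in B,\ \Theta_{N_{t+s}} = la \Bigr),
$$
where $N_u$ is the renewal count from \eqref{DefNu} and $\Theta_n$ denotes $S_{\psi}(n,x) - n\nu(\psi)$ reduced by the appropriate coboundary $H$-terms modulo $a$, using that in Case {\bf (D)} we have $M(\tau,\check{\bm\varphi}) = a\mathbb Z \times \mathbb R$ and $r$ has second coordinate irrational relative to $d$ (so the second coordinate of $\check{\bm\varphi}$ contributes a continuous factor). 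Boundedness of $\tau$ from {\bf (H8)} guarantees $N_{t+s}$ lies in a window of size $O(\sqrt t)$ around $t/\nu(\tau)$ except on a set controlled by {\bf (H7)}, which produces the $S_2$-type error term that vanishes after the $K \to \infty$ limit.

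Next I would apply {\bf (H6)} (in the form of Lemma~\ref{lemma:nonmin} and its Remark~\ref{remsets} reformulation) to each term in the main sum $S_1$ over $n$ with $|n - t/\nu(\tau)| < K\sqrt t$. The two-dimensional MLCLT with group $M = a\mathbb Z \times \mathbb R$ gives, for each such $n$, a factor $\fg_{\Sigma(\check{\bm\varphi},\tau)}$ evaluated at the rescaled deviation, times $(\nu \times u_{a\mathbb Z} \times \mathrm{Leb}_1 \times \nu)$ of the relevant cylinder set, shifted by $\varkappa_n = -nr \pmod{M}$ as in \eqref{eq:Rie000}. Because $b/d$ is irrational, the first coordinate of $\varkappa_n$, namely $-nb \pmod a$ paired with the constraint $\Theta_{N_{t+s}} = la$ and the residue $\rho$ of \eqref{defrho}, forces exactly the discrete sum over $k$ in \eqref{eq:propposgen}: the constraint $t + s - (W(t)/a + l)b \equiv \rho \pmod d$ together with boundedness of $\tau$, $h_\tau$ restricts $S_{\hat\tau}(n,x) - t$ to finitely many admissible values $kd + h_\tau(x) - h_\tau(y)$, explaining the range $|k| \le (\|\tau\|_\infty + 2\|h_\tau\|_\infty + 1)/d$. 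Then I would pass from the Riemann sum over $n$ (via the variable $m = \lfloor t/\nu(\tau)\rfloor - n$) to the Gaussian integral $\int \fg = 1$, picking up the $1/\nu(\tau)$ and the $a d$ normalization, and integrate the continuous (Lebesgue) coordinate against $I$, which converts $\int_{s\in I}\,ds$ into the $\mu(x,s)$-integral over $\mathcal A$ in \eqref{eq:propposgen}.

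The reason an exact product measure fails here — unlike Cases {\bf (A)}--{\bf (C)} — is that $\hat{\mathcal V}$ is not a direct product of its coordinate projections: the linearized group is spanned by $(a,b)$ and $(0,d)$ with $b/d$ irrational, so the fiber over the first coordinate is a dense but nontrivial coset depending on the actual value of the first coordinate and on $t \bmod d$, which is precisely what the residue $\rho = \rho(s,t,l,W(t))$ encodes. I would use Lemma~\ref{lem:Weyl} (Weyl equidistribution for $\varkappa_n$) exactly as in the passage following \eqref{eq:Rie000}: splitting the $m$-sum into blocks of length $N$ and letting $N \to \infty$ after $t \to \infty$, the equidistribution of $n \mapsto -nr \pmod M$ on the relevant subtorus replaces the $\varkappa_n$-shifted measure by its average, but — crucially — the arithmetic constraint tying the first coordinate to $k d + \rho$ survives the averaging because it is a closed condition on the torus, not something that gets smeared out.

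\textbf{Main obstacle.} The delicate point is the bookkeeping of the coboundary terms $h$, $h_\tau$, $H$ and the residue $\rho$: one must verify that the $H$-shift built into $\Xi_t$ (here $H = \int_0^s \bm\varphi(x,s')\,ds' + h(x) \pmod a$) interacts correctly with the section-to-map passage so that the $h_\tau$-translations appear inside the indicator $1_{\{\rho + kd + h_\tau(x) - h_\tau(y) \in J\}}$ and nowhere else, and that the definition \eqref{defrho} of $\rho$ modulo $d$ is the consistent one. I expect this — rather than any analytic estimate — to be where the real work lies; the moderate-deviation tail and the Weyl-averaging steps are essentially identical to Section~\ref{SSNAMix} and can be cited almost verbatim.
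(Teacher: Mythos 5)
Your overall architecture is the same as the paper's: unfold $\Xi_t$ over $s\in I$ via the renewal count $N_{t+s}$, control the window with {\bf (H7)}--{\bf (H8)}, apply the two--dimensional MLCLT {\bf (H6)}, and obtain the discrete $k$-sum from the arithmetic constraint that $(W(t)+la,\,z_2)$ lie in $\hat{\mathcal V}$. However, there is a genuine error at the measure-theoretic core. You assert that in Case {\bf (D)} one has $M(\check{\bm\varphi},\tau)=a\mathbb Z\times\mathbb R$ and that the fiber measure in the MLCLT is $u_{a\mathbb Z}\times Leb_1$, the second coordinate ``contributing a continuous factor.'' That is the structure of Case {\bf (B)}, not Case {\bf (D)}: in Case {\bf (D)} the linearized group $\hat{\mathcal V}$ is the two-dimensional \emph{lattice} generated by $(a,b)$ and $(0,d)$, hence $M$ is a (finite-index sub)lattice of $\hat{\mathcal V}$ and $u_M$ is purely atomic. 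This is not a cosmetic point: the atomicity is exactly what produces both the sum over $k$ and the constant $ad=\vol(\mathbb R^2/\hat{\mathcal V})$, which is the $u_{\hat{\mathcal V}}$-mass of a single lattice point under the normalization of the Haar measure. If one computes with your product measure $u_{a\mathbb Z}\times Leb_1$, the second coordinate constraint $z_2\in -J+s+h_\tau(x)-h_\tau(y)$ contributes $|J|$ and the conclusion collapses to Theorem \ref{thm1}{\bf (B)}, contradicting \eqref{eq:propposgen}. Indeed your own later sentence --- that the constraint ``restricts $S_{\hat\tau}(n,x)-t$ to finitely many admissible values'' --- is incompatible with a Lebesgue fiber, since finitely many values have $Leb_1$-measure zero; as written, the two halves of your argument cannot both be run.

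The repair is the paper's route: for each $n$ and each admissible $k$ the event pins $(S_\psi(n,x)-W(t),\,S_{\hat\tau}(n,x)-t)$ to the single lattice point $(la,\,s-\rho-kd)$, which lies in $\hat{\mathcal V}$ precisely by the definition \eqref{defrho} of $\rho$; one applies {\bf (H6)} in the set formulation of Remark \ref{remsets} with this singleton as the fiber test set (of $u_M$-mass equal to the covolume), checks that the events $C_{n,k}(s)$ are pairwise disjoint because $x\in C_{n,k}(s)$ forces $n=N_{t+s}(x,s)$ and determines $k$, and then sums over $k$ and over $n$ in the $K\sqrt t$-window before integrating over $s$. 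Your remaining steps --- the {\bf (H7)} tail estimate, the Riemann-sum-to-Gaussian-integral passage, and the equidistribution over the finite quotient $\hat{\mathcal V}/M$ --- are correct and coincide with the paper's.
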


Note that in the special case when $\tau$ is minimal and thus $h_{\tau}$ can be chosen to be zero,
the formula \eqref{eq:propposgen} reduces to 
\begin{equation}
\label{eq:propposmin}
\mathcal I_t = \frac{\nu(A)}{\nu(\tau)} 
\fg_{\Sigma}(W) a d \int_{s \in I} \mathrm{Card} (m: \rho + md \in J) ds 
\frac{\nu(B)}{\nu(\tau)}.
\end{equation}
This formula is consistent with 
 Theorem \ref{thm1} {\bf (B)} in the sense that 
for $I$ and $J$ fixed and $d \ll 1$, $d \int_{s \in I} Card (m: \rho + md \in J) ds \approx |I||J|$ and
$u_{a \mathbb Z}$ is $a$ times the counting measure. Thus, recalling Remark \ref{remsets}, we recover 
Theorem \ref{thm1} {\bf (B)} in the limit $d \searrow 0$.

\begin{proposition}
\label{prop:E}
Assume conditions {\bf (H1)}-{\bf (H8)} and Case {\bf (E)}. Then the statement of Proposition \ref{prop:pos}
remains valid with the following changes:
\begin{enumerate}
 \item $a:=a'-\frac{c'd'}{b'}$, $b: = b', d: =d'$,
 \item $W(t) \in a \mathbb Z + \frac{c'}{d'} t$,
 \item $H_E(x,s) = \int_0^{s} \bm \varphi(x,s') ds' + h(x)  - \frac{c'}{d'} (s + h_{\tau}(x)) \;(\bmod\; a).$
 \item \eqref{defrho} has to be replaced by 
 $\rho \equiv s+ t - \left( \frac{W(t)-c't/d'}{a} +l\right)b \;(\bmod\; d). $
\end{enumerate}

\end{proposition}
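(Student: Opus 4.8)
\textbf{Proof proposal for Proposition \ref{prop:E}.}

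The plan is to reduce Case \textbf{(E)} to Case \textbf{(D)} by a linear change of coordinates in $\mathbb{R}^2$, exactly in the spirit of how \textbf{(C)} is handled relative to \textbf{(B)} in Theorem \ref{thm1}. The linearized group $\hat{\mathcal V}$ in Case \textbf{(E)} is generated by $(a',b')$ and $(c',d')$ with $a'/c'$, $b'/d'$ irrational and $a'd'-b'c'>0$; I would apply the shear $L(x_1,x_2)=\bigl(x_1 - \tfrac{c'}{d'}x_2,\, x_2\bigr)$ (equivalently, replace the first component $\check{\bm\varphi}$ by $\check{\bm\varphi}-\tfrac{c'}{d'}\tau$, which only changes it by a multiple of $\tau$ and hence does not affect the flow observable modulo the adjustment in $H$). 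Under $L$, the generator $(c',d')$ maps to $(c'-\tfrac{c'}{d'}d',\,d')=(0,d')$, and $(a',b')$ maps to $(a'-\tfrac{c'd'}{b'}\cdot\tfrac{b'}{d'}\,?\,,\,b')$ — more carefully one checks $L(a',b')=(a'-\tfrac{c'b'}{d'},\,b')$, and since the original group also contains $(c',d')$ one can further subtract an integer multiple so that the first coordinate of a suitable generator becomes $a:=a'-\tfrac{c'd'}{b'}$; the point is that $L\hat{\mathcal V}$ is generated by $(a,b)$ and $(0,d)$ with $b/d=b'/d'$ irrational, i.e. it is exactly the group of Case \textbf{(D)} with the stated parameters $a,b,d$. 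The correction term $-\tfrac{c'}{d'}(s+h_\tau(x))$ appearing in $H_E$ is precisely the contribution of integrating the new first-coordinate observable $\bm\varphi - \tfrac{c'}{d'}\cdot(\text{flow-time derivative of }\tau)$ along the flow, together with the coboundary $h,h_\tau$ from \eqref{defh}, \eqref{defh_tau}; this is the same bookkeeping as in the passage from \textbf{(B)} to \textbf{(C)} in Theorem \ref{thm1}.

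Concretely the steps are: (1) verify that replacing $\check{\bm\varphi}$ by $\check{\bm\varphi}-\tfrac{c'}{d'}\tau$ leaves hypotheses \textbf{(H6)}, \textbf{(H7)} intact (the MLCLT and the moderate deviation bound are invariant under invertible linear changes of the pair $(\check{\bm\varphi},\tau)$ that fix the $\tau$-coordinate, since $u_M$ transforms covariantly and the Gaussian $\fg_\Sigma$ transforms by the corresponding quadratic change — and only the $(1,1)$ entry of $\Sigma$ survives in the final formula, which is unchanged because the shear fixes the second coordinate); (2) check that under this change the flow observable $\int_0^t\bm\varphi(\Phi^{s'}\cdot)ds'$ is modified by $-\tfrac{c'}{d'}\bigl(S_\tau(N,x)\bigr)$-type terms that telescope into the $H_E$ given, modulo $a$, on the event that the flow has run for time $t$; (3) invoke Proposition \ref{prop:pos} for the transformed data in Case \textbf{(D)}, which outputs $\mathcal I_t$ with $\rho\equiv s+t-(\tfrac{W(t)}{a}+l)b\ (\mathrm{mod}\ d)$ but now $W(t)$ lives in the image lattice, i.e. $W(t)\in a\mathbb Z + \tfrac{c'}{d'}t$, so that the "$W(t)$'' fed into the \textbf{(D)}-formula is really $W(t)-\tfrac{c'}{d'}t \in a\mathbb Z$; substituting this gives item (4), $\rho\equiv s+t-\bigl(\tfrac{W(t)-c't/d'}{a}+l\bigr)b\ (\mathrm{mod}\ d)$; (4) transport the conclusion back, noting that $\mathcal A\times\{la\}\times\mathcal B$ and the ranges of $k$ in \eqref{eq:propposgen} are unaffected because the fiber direction (second coordinate) is fixed by the shear.

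The main obstacle is step (2): one must be careful that the coordinate change on $\mathbb{R}^2$ at the level of the \emph{base map} data $(\check{\bm\varphi},\tau)$ induces exactly the asserted coboundary $H_E$ at the level of the \emph{flow}, including the correct reduction modulo $a$ and the correct identification of which representative of $\varkappa_n \in \mathbb R^2/M$ is being used. This requires redoing the computation in Section \ref{sec:mainres} that produced $H_B$ and $H_C$ in Theorem \ref{thm1}, now with the extra shear, and tracking that the discrete fiber variable $W(t)$ and the continuous fiber of Case \textbf{(D)} get matched up consistently; the use of boundedness of $\tau$ (\textbf{(H8)}) is what keeps the coboundary terms $h$, $h_\tau$, and $\int_0^s\bm\varphi$ bounded so that "modulo $a$'' is well behaved and the continuous mapping theorem applies as in Lemma \ref{lemma:nonminH0}. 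Apart from this bookkeeping, no new analytic input beyond Proposition \ref{prop:pos} and Lemma \ref{lem:Weyl} should be needed.
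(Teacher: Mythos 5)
Your approach is the paper's: Case \textbf{(E)} is reduced to Case \textbf{(D)} by the shear $(x_1,x_2)\mapsto(x_1-\tfrac{c'}{d'}x_2,\,x_2)$, implemented by replacing $S_{\psi}(N_{t+s},x)-W(t)$ with $S_{\psi}(N_{t+s},x)-W(t)-\tfrac{c'}{d'}\bigl[S_{\hat\tau}(N_{t+s},x)-t\bigr]$ and rerunning the proof of Proposition \ref{prop:pos} with the correspondingly tilted set $C^{(k)}$; your bookkeeping of $H_E$, of the recentering $W(t)-c't/d'\in a\mathbb Z$, and of the modified \eqref{defrho} is exactly what the paper does, and the fact that the shear fixes the second coordinate is what keeps the $k$-range and the $J$-condition unchanged.

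One step of your write-up is wrong, though: you cannot ``subtract an integer multiple'' to turn the sheared generator $\bigl(a'-\tfrac{c'b'}{d'},\,b'\bigr)$ into one with first coordinate $a'-\tfrac{c'd'}{b'}$. Adding multiples of $(0,d')$ changes only the second coordinate, so the first coordinates occurring in the sheared lattice are precisely the integer multiples of $a'-\tfrac{c'b'}{d'}$. Your initial computation $L(a',b')=\bigl(a'-\tfrac{c'b'}{d'},\,b'\bigr)$ is the correct one; it is consistent with $\vol(\mathbb R^2/\hat{\mathcal V})=ad=a'd'-b'c'$, which the proof of Proposition \ref{prop:pos} actually uses. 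The expression $a'-\tfrac{c'd'}{b'}$ in item (1) of the statement appears to be a typo (with $b'$ and $d'$ interchanged), not something your argument should try to reproduce; with $a:=a'-\tfrac{c'b'}{d'}$ the rest of your reduction goes through as written.
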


\begin{remark}
 Case {\bf (C)} and can be reduced to Case {\bf (B)}
and Case {\bf (E)} can be reduced to Case {\bf (D)}
 by applying the shear 
$
\left[
\begin{array}{c c}
 1& -v\\
 0 & 1
\end{array}
\right]
$ to $\hat{\mathcal V}$
with $v= \frac{1}{\alpha}$ in Case {\bf (C)} and  with $v= \frac{c'}{d'}$ in Case {\bf (E)} (this will produce 
a non-zero $R$ in the MLCLT and in its weaker form as in Propositions \ref{prop:pos} and \ref{prop:E}).
We note that while the shear is uniquely determined in Case {\bf (C)}, it is not unique in Case~{\bf (E)}.
\end{remark}

\subsection{Higher dimensions}
Here we state the high dimensional generalization of Theorem \ref{thm1}. We omit the proof as
it is analogous to the proof of the one dimensional case.
We need to replace 
$w^{1/2}$ by $w^{d/2}$ in {\bf (H7)}.  
Observe that we can construct the group $\hat{\mathcal V}$ exactly as before as Weyl's theorem holds in any
dimension.
Now we have the following.

\begin{theorem}
\label{thmhighd}
Assume  {\bf (H1)} - {\bf (H8)}.
Assume furthermore that there is a closed subgroup $\mathcal V \subset \mathbb R^d$, a 
$(d+1)\times (d+1)$ matrix of the form
$
A = \left[
\begin{array}{c c}
I_d &-\bm v\\
\bm 0^T &1
\end{array}
\right]
$, with $\bm v \in \mathbb R^d$ such that $A \hat{\mathcal V} = \mathcal V \times \mathbb R$
(we assume that $\bm v$ is orthogonal to the linear subspace contained in $\mathcal V$, the choice of such $\bm v$
is unique). Then $(\Phi, \bm \varphi)$ satisfies the MLCLT with $\mathcal V$, $R= \bm v + \mathcal V$ 
and
$$H(x,s) = \int_0^{s} \bm \varphi(x,s') ds' + h(x)  - v (s + h_{\tau}(x)) \;(\bmod\; \mathcal V).$$
\end{theorem}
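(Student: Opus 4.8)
The plan is to reduce Theorem \ref{thmhighd} to the already-established one-dimensional results (Theorem \ref{thm1} and Propositions \ref{prop:pos}, \ref{prop:E}) by performing a change of coordinates in $\mathbb{R}^{d+1}$ that straightens $\hat{\mathcal V}$. Concretely, I would work with the pair $(\tau, \check{\bm \varphi})$ on $X$, which lives in $\mathbb{R}\times\mathbb{R}^d$; its linearized group, with the first (temporal) coordinate written last, is $\hat{\mathcal V}\subset\mathbb{R}^{d+1}$. The matrix $A$ of the stated form shears the temporal coordinate into the last slot without touching it, and by hypothesis $A\hat{\mathcal V}=\mathcal V\times\mathbb{R}$. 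So after applying $A$, the transformed observable $(\tau, \check{\bm\varphi}-\bm v\tau)$ has a linearized group that splits as a product, and the temporal projection is still $\mathbb{R}$ (the shear preserves the last coordinate, hence Proposition \ref{prop1} still applies). This places us exactly in the product situation handled by the ``arithmetic independence'' cases (A)/(B) of Theorem \ref{thm1}, now in $d$ dimensions rather than $1$.

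The key steps, in order, are as follows. First, verify that hypotheses {\bf (H1)}--{\bf (H8)} are stable under the substitution $\check{\bm\varphi}\mapsto \check{\bm\varphi}-\bm v\tau$: the MLCLT {\bf (H6)} transforms covariantly (the covariance matrix gets conjugated by the relevant linear map and $M, r$ get pushed forward), the moderate deviation bound {\bf (H7)} is unaffected since it only concerns the temporal marginal $S_\tau$ and the new observable differs from the old by a bounded-variance term controlled on the same events, and boundedness/continuity and zero-mean are clearly preserved (note $\mu(\bm\varphi - v\tau\text{-type correction})$ — here one uses that $\int_0^t(\bm\varphi - (\text{correction}))$ differs from $\int_0^t\bm\varphi$ by the coboundary-type term $v(s+h_\tau(x)) - v(s'+h_\tau(y))$ plus a term linear in $t$ absorbed into $W(t)$). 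Second, apply the $d$-dimensional analogue of Theorem \ref{thm1}(A)/(B) to the sheared system to get the MLCLT with group $\mathcal V$, translation $R = \bm v + \mathcal V$, Gaussian density $\fg_\Sigma(W)$, and coboundary $H$ as in case (B) but with the extra shear term $-v(s+h_\tau(x))$, exactly the formula stated. Third, translate back: the change of variables $z\mapsto z - \bm v\cdot(\text{time})$ on the $\mathbb{R}^{d+1}$-fiber converts the product limiting measure $u_{\mathcal V}\times\text{Leb}$ (in straightened coordinates) into $u_{\hat{\mathcal V}}$, and one checks via Lemma \ref{lem:Weyl} (Weyl equidistribution, valid in all dimensions as the excerpt notes) that the Riemann-sum-over-$n$ argument from Section \ref{SSNAMix} and Section \ref{sec:mainres} goes through verbatim, now summing $u_M(\cdot + \check\varkappa_n)$ against the Gaussian weights.

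The main obstacle I expect is bookkeeping the coboundary term $H$ and confirming it is $\mu$-almost everywhere continuous and bounded on $\{s\le M\}$: the term $\int_0^s\bm\varphi(x,s')ds' + h(x) - v(s+h_\tau(x))$ must be shown well-defined modulo $\mathcal V$ and compatible with the identification $(y,\tau(y))\sim(Ty,0)$, which is precisely where the decompositions \eqref{defh} and \eqref{defh_tau} and the fact that $(\tau,\check{\bm\varphi})$ has linearized group $\hat{\mathcal V}$ enter — the jump of $\int_0^s\bm\varphi\,ds' + h(x) - v(s+h_\tau(x))$ across a roof crossing equals $\check{\bm\varphi}(x) - h(x) + h(Tx) - v(\tau(x) - h_\tau(x) + h_\tau(Tx)) = \psi(x) - v\hat\tau(x)$, which lies in $M \subset$ (the straightened) $\hat{\mathcal V}$, so the formula descends to $\Omega$ with values in $\mathbb{R}^d/\mathcal V$. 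A secondary technical point is handling unbounded $\tau$ for the abstract statement; but since we impose {\bf (H8)} the test functions $\mathfrak X,\mathfrak Y$ are arbitrary bounded continuous functions on $\Omega$ (Remark \ref{bddtau}), and the finite sum over $n$ with uniform control from {\bf (H7)} needs only the compactness afforded by $\tau\in L^\infty$. Everything else is a routine higher-dimensional transcription of the one-dimensional arguments already carried out, which is why the proof is omitted.
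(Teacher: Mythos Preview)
Your proposal is correct and follows the same approach as the paper, which simply states that the proof is analogous to the one-dimensional case (Theorem \ref{thm1}) and omits the details. Your sketch fills in precisely the expected steps: apply the shear $A$ to reduce to the product situation $\mathcal V\times\mathbb R$ (the higher-dimensional analogue of reducing Case {\bf (C)} to Case {\bf (B)}), then rerun the Riemann-sum argument of Section \ref{sec:casesac} using Lemma \ref{lem:Weyl}; your verification that $H$ descends to $\Omega$ modulo $\mathcal V$ via $\psi(y)-\bm v\,\hat\tau(y)\in\mathcal V$ is exactly the right consistency check.

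One small correction: {\bf (H7)} involves the full vector $f=(\tau,\check{\bm\varphi})$, not just the temporal marginal, but it transfers under the shear simply because $A$ is a linear isomorphism (so balls map to bounded sets and the estimate is preserved up to changing $R$).
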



\subsection{Proof of Theorem \ref{thm1}}
\label{sec:casesac}

The proof of Theorem \ref{thm1} is similar to the proof that (c) implies (b) in Proposition
\ref{prop1}. The main difference is that we apply {\bf (H6)} instead of {\bf (H1)}.

Recall the notation introduced in \eqref{defh} and \eqref{defh_tau} and write 
\begin{equation}
\label{Hhat}
\hat H (x,s)  = \int_0^{s} \bm \varphi(x,s') ds' + h(x).
\end{equation}
 Recall \eqref{DefNu}.
 By construction, for any $(x,s) \in \Omega$ and any $t>0$, we have
 \begin{eqnarray}
 && \int_0^t \bm \varphi(\Phi^{s'}(x,s))ds' + \hat H(x,s) - \hat H(\Phi^t(x,s)) \\
 &=& S_{\check{\bm \varphi}} (N_{t+s},x) + h(x) - h (T^{N_{t+s}} x) \nonumber \\
&=&  S_{\psi} (N_{t+s},x)\label{eq:flowtomap}.
 \end{eqnarray}

According to Remark \ref{remsets2}, we choose $\mathfrak X = 1_{\mathcal A}$, 
$\mathfrak X = 1_{\mathcal B}$ 
where $\mathcal A=A\times I$, $\mathcal B=B\times J,$
and $\mathfrak Z  = 1_{\mathcal H}$, where $\mathcal H \subset \mathcal V$.
Without loss of generality, we can assume that $\mathcal H$ is a compact interval in cases {\bf(A)} and
{\bf (B)} and 
$|\mathcal H| = 1$ in case~{\bf (C)}.

Recall the definition of $\Xi_t$ and write $\mathcal C = \mathcal A \times \mathcal H \times \mathcal B$.
We have 
\begin{eqnarray}
\Xi_t(\mathcal C) &=& \mu \left(  
(x,s) \in \mathcal A: S_{\psi}(N_{t+s},x) -W(t)
\in \mathcal H, 
\Phi^t (x,s) \in \mathcal  B \right) \label{XiC1} \\
&=& \frac{1}{\nu(\tau)}\int_{s \in I} \nu \bigg(  
x \in  A: S_{\psi}(N_{t+s},x) - W(t) \in \mathcal H, \label{XiC2} \\
&& S_{\tau}(N_{t+s},x) -t \in - J +s,
T^{N_{t+s}(x,s)} (x) \in B \bigg) ds. \nonumber
\end{eqnarray}
Now for
some fixed $s \in I$, 
we define $C(s)$ to be the set of points $(x, z, y)$, $x, y \in X$, $z = (z_1,z_2) \in \mathbb R^2$ 
that satisfy
$$ x \in A;\quad  y \in B; \quad z_1 \in \mathcal H; \quad
z_2 \in -J + s+ h_{\tau} (x) - h_{\tau} (y).$$
Then we have
$$\Xi_t(\mathcal C) 
= \int_{s \in I} \frac{1}{\nu(\tau)} \nu \bigg(  
x:
\big(
x, 
S_{\psi}(N_{t+s},x) - W(t), 
S_{\hat \tau}(N_{t+s},x) -t, 
T^{N_{t+s}} (x)
\big) \in C(s)
 \bigg) ds
$$
Let 
 $$
 C_n(s) = \{ x: 
 \big(
x, 
S_{\psi}(n,x) - W(t),
S_{\hat \tau}(n,x) -t, 
T^{n} (x)
\big) \in C(s) \}.
 $$
Observe that $x \in C_n(s)$ implies $t+s - S_{\tau}(n,x) \in J.$ Hence $N_{t+s}(x,s) = n$.
Therefore
\begin{equation}
\label{eq:H7'use}
\Xi_t(\mathcal C) =  \int_{s \in I} \frac{1}{\nu(\tau)} \nu (x: x \in C_{N_{t+s}} (s)) ds = 
\int_{s \in I}  \sum_{n=1}^{t/\inf \tau } \frac{1}{\nu(\tau)} \nu (C_{n} (s)) ds.
\end{equation}
By {\bf (H4)}, $\nu(\check{\bm \varphi}, \tau) = (0,\nu(\tau))$.
We write the above sum as $S_1+ S_2$, where
$$
S_1 = \sum_{n: |n-t/\nu(\tau)| < K \sqrt t} \int ...
$$
with $K \gg 1$ and $S_2$ is an error term which is small by {\bf (H7)}. It suffices to compute $S_1$.

Let us first study the special case 
when $M(\check{\bm \varphi}, \tau) = \mathbb R^{2}$. We will refer to this case as {\it non-arithmetic}.
Clearly, the non-arithmeticity implies Case {\bf (A)}.

\subsubsection{The non-arithmetic case}
\label{sec:A1}
We assume
$M(\check{\bm \varphi}, \tau) = \mathbb R^{2}$. 
Clearly, $r=0$, $\mathcal V = \mathbb R$ and $R= 0$.

For a fixed $K$, we have by 
{\bf (H6)} and by dominated convergence that
\begin{equation}
\label{eq:Rie1}
S_1 \sim \sum_{n: |n-t/\nu(\tau)| < K \sqrt t} \frac{1}{\nu(\tau) n} \fg^*
\int_{s \in I} (\nu \times Leb_{2} \times \nu) (C(s)) ds,
\end{equation}
where $m = \lfloor t/\nu(\tau) \rfloor -n$ and 
$\fg^*=\fg_{\sigma} \left(  - W \sqrt{\nu(\tau)}, \frac{m}{\sqrt n} \nu(\tau) \right).$
 
To compute $\int_{s \in I} (\nu \times Leb_{2} \times \nu) (C(s)) ds$, let us evaluate the integral with respect to 
$z_2$ first. For any $s, x, y$ fixed, this integral is equal to $|J|$. Consequently,
$$
\int_{s \in I} (\nu \times Leb_{2} \times \nu) (C(s)) ds = |I| \nu(A) Leb_1 (\mathcal H) |J| \nu(B) 
= (\nu(\tau))^2 \mu (\mathcal A) Leb_1 (\mathcal H) \mu(\mathcal B)
$$
Substituting the Riemann sum in \eqref{eq:Rie1} with a Riemann integral, we obtain
\begin{equation}
\label{eq:Rie2}
S_1 \sim 
\left(\frac{\nu(\tau)}{t} \right)^{1/2} \left[ \int_{-K \sqrt{\nu(\tau)}}^{K \sqrt{\nu(\tau)}}
\fg_{\sigma} \left(  - W \sqrt{\nu(\tau)}, \mathfrak z \nu(\tau) \right) d \mathfrak z \right]
\nu(\tau) \mu (\mathcal A) Leb_1 (\mathcal H) \mu(\mathcal B),
\end{equation}
where $\sigma \in GL(2,\mathbb R)$ 
is the covariance matrix of $(\check{\bm \varphi}, \tau)$.
Let us write $\sigma' = \sigma_{11}$.
By the properties of the Gaussian distribution,
$$
\int_{-K \sqrt{\nu(\tau)}}^{K \sqrt{\nu(\tau)}}
\fg_{\sigma} \left(  - W \sqrt{\nu(\tau)}, \mathfrak z \nu(\tau) \right) d\mathfrak z  = 
\frac{1}{\nu(\tau)} \fg_{\sigma'} ( - W \sqrt{\nu(\tau)}) 
(1+o_{K \rightarrow \infty} (1)),
$$
and 
$$
 \fg_{\sigma'} ( - W \sqrt{\nu(\tau)}) =  \sqrt{\nu(\tau)}\fg_{\frac{1}{\nu(\tau)} \sigma'} ( W ) 
$$
Substituting the last two displays into \eqref{eq:Rie2} gives 
\begin{eqnarray*}
S_1 &\sim& \frac{1}{ t^{1/2}} \fg_{\frac{1}{\nu(\tau)} \sigma'} ( W )
\mu (\mathcal A) Leb_2 (\mathcal H) \mu(\mathcal B)
(1+o_{K \rightarrow \infty} (1)).\\
\end{eqnarray*}
This implies
$$
\Xi_t(\mathcal C) \sim \frac{1}{ t^{1/2}} \fg_{\frac{1}{\nu(\tau)} \sigma'} ( W )
\mu (\mathcal A) Leb_2 (\mathcal H) \mu(\mathcal B).
$$
Noting that $H$=$\hat H \;(\bmod\; \mathcal{V})$, we obtain the
MLCLT with variance $\Sigma = \frac{1}{\nu(\tau)} \sigma'$.

\subsubsection{Cases (A) and (B)} 

We  follow the strategy of Section \ref{sec:A1}. 
The main difference
is that now \eqref{eq:Rie1} is replaced by
\begin{equation}
\label{eq:Rie3}
\sum_{n: |n-t/\nu(\tau)| < K \sqrt t} \frac{1}{\nu(\tau) n} 
\fg^*
\int_{s \in I} (\nu \times u_M \times \nu) (C(s) + \varkappa_n) ds
\end{equation}
where 
$M=M(\check{\bm \varphi}, \tau)$, 
$r=r(\check{\bm \varphi}, \tau),$
$\varkappa_n \in \mathbb R^2 /M$ is defined by
$$
\varkappa_n = (Z(t), t) - n r  \;(\bmod\; M)
$$
and $C(s) + \varkappa$ is defined as
$$\{ (x,z+\check \varkappa,y): (x,z,y) \in C(s)\},$$
with some $\check \varkappa \in \mathbb R^2$, $\check \varkappa + M= \varkappa$ (since $u$ is the Haar measure, 
\eqref{eq:Rie3} does not depend on the choice of the representative).

Recall the definition of the linearized group $\hat{\mathcal V}$. 
Writing the sum in \eqref{eq:Rie3} as
$$
\sum_{j=0}^{2 K \sqrt t /N -1} \left(\sum_{m=-K \sqrt t + jN}^{-K \sqrt t + (j+1)N -1} \dots \right),
$$
and using Lemma \ref{lem:Weyl}, we conclude that
\begin{equation}
 \label{eq:caseacsplit}
S_1 \sim
\sum_{n: |n-t/\nu(\tau)| < K \sqrt t} \frac{1}{\nu(\tau) n} 
\fg^*
\int_{s \in I} (\nu \times u_{\hat{\mathcal V}} \times \nu) (C(s) +(W(t),t)) ds
\end{equation}
In cases {\bf (A)} and {\bf (B)}, $\hat{\mathcal V} = \mathcal V \times \mathbb R$ and 
$W(t) \in \mathcal V$. Consequently,
\begin{equation}
 \label{eq:centering}
(W(t),t) \in \hat{\mathcal V}.
\end{equation}
We need to compute
\begin{equation}
\label{eq:hommeasure}
  \int_{s \in I} (\nu \times u_{\hat{\mathcal V}} \times \nu) (C(s) +(W(t),t)) ds = 
\int_{s \in I} (\nu \times u_{\hat{\mathcal V}} \times \nu) (C(s)) ds.
\end{equation}
Integrating with respect to $z_2$ 
we conclude that \eqref{eq:hommeasure} is equal to 
$$
|I| \nu(A) u_{\mathcal V} (\mathcal H) |J| \nu(B).
$$
The rest of the proof is identical to Section \ref{sec:A1}.

\subsubsection{ Case (C)}
We need to adjust the above proof to cover case {\bf (C)} as the measure $u_{\hat{\mathcal V}}$
is not a product in the coordinates $z_1$, $z_2$. Since the proof is similar, we just list the required modifications.
First, 
$\hat H$ is replaced by 
$$
\int_0^{s} \bm \varphi(x,s') ds' + h(x) - \frac{1}{\alpha} (s + h_{\tau}(x)).
$$
Then we need to replace $S_{\psi}(N_{t+s},x)$ by 
$$
S_{\psi}(N_{t+s},x) - \frac{1}{\alpha} \left[ 
S_{\hat \tau}(N_{t+s},x) -t \right]
$$
in formulas \eqref{XiC1} and \eqref{XiC2}.
Also, we replace $C(s)$ by 
the set of points $(x, z, y)$, such that  $x, y \in X$, $z = (z_1,z_2)$ and
$$x \in A; \quad y \in B; \quad z_2 \in -J + s+h_{\tau} (x) - h_{\tau} (y);\quad
z_1 \in \mathcal H + \frac{1}{\alpha} z_2.$$
With this modification, we repeat the previous proof up to the derivation of the formula \eqref{eq:caseacsplit}.
Note that \eqref{eq:centering} and \eqref{eq:hommeasure} hold as well since 
$W(t) \in \frac{\beta}{\alpha} \mathbb Z + t \frac{1}{\alpha}$.
Now we can easily compute \eqref{eq:hommeasure}. Indeed, for any $s,x,y$ fixed, we have
$$
u_{\hat{\mathcal V}} \left( (z_1,z_2): z_2 \in -J + s+ h_{\tau} (x) - h_{\tau} (y), 
z_1 \in \mathcal H + \frac{1}{\alpha} z_2 \right) = u_{\mathcal V} (\mathcal H) |J|.
$$
and we can complete the proof as before.

\subsection{Cases (D) and (E)}

\begin{proof}[Proof of Proposition \ref{prop:pos}]
The proof is similar to the one in Section \ref{sec:casesac}. That is, we define $N_{t+s}$ by
\eqref{DefNu} as before and analogously to
\eqref{XiC2}, we have 
\begin{align}
\Xi_t(\mathcal A \times \{ la\} \times \mathcal B) \nonumber & \\
= \frac{1}{\nu(\tau)}\int_{s \in I} \nu \bigg(  x \in  A: \nonumber &\\
 & S_{\psi}(N_{t+s},x) - W(t) =la , \label{XiD2} \\
 & S_{\hat \tau}(N_{t+s},x) -t \in - J +s + h_{\tau}(x) - h_{\tau} (T^{T_t(x,s)} (x)) \nonumber \\
 & T^{N_{t+s}(x,s)} (x) \in B \bigg) ds. \nonumber
\end{align}
The main difference with Section \ref{sec:casesac} is that the set of $z_2$'s such that 
$(W(t) + la, z_2) \in \hat{\mathcal V}$ is discrete (and not an interval). We will index this set by $k$.
Fix some $s \in I$. 
Given $x \in A$ and $k$ with $|k| \leq \frac{\|\tau  \|_{\infty} + 2\|h_{\tau}\|_{\infty} +1}{d}$, let 
$$
B_{k,x} = \{ y \in B: \rho + kd + h_{\tau}(x) - h_{\tau}(y) \in J\}
$$
and
$$
C_{n,k}(s) = \{ x\in A : S_{\psi} (n,x) - W(t) = la, s+t-S_{\hat \tau }(n,x) = \rho + kd, T^{n} x \in B_{k,x} \}.
$$
Observe that $x \in C_{n,k}(s)$ implies 
$$0 \leq \rho + kd + h_{\tau}(x) - h_{\tau}(T^n x)  = s+t - S_{ \tau }(n,x) < \tau (T^n x),$$ 
and consequently
$n = N_{t+s}(x,s)$. 
It follows that 
\begin{equation}
\label{eq:disjointcnk}
 C_{n,k}(s) \cap C_{n',k'}(s) = \emptyset \text{ unless }n=n' \text{ and } k=k'.
\end{equation}
Let $C^{(k)} =\{ (x,la, s-\rho- kd,y) : x \in A, y \in B_{k,x} \}$. Then,
\begin{equation}
\label{Ckn}
C_{n,k}(s) = \{ x: (x,S_{\psi} (n,x) - W(t), S_{\hat \tau }(n,x) - t, T^n(x)) \in C^{(k)}\}. 
\end{equation}
Note that by the definition of $\rho$, $(W(t) + la, s+t - \rho -kd) \in \hat{\mathcal V}$. Thus we can use 
{\bf (H6)} similarly to the proof of Theorem \ref{thm1}, to deduce
$$
\nu (x: x \in C_{N_{t+s},k}(s)) \sim \frac{ \fg_{\Sigma}(W)}{\nu(\tau) \sqrt t} 
(\nu \times \nu) (x\in A, y \in B_{k,x}) \vol(\mathbb R^2 / \hat{\mathcal V}). 
$$
Next, we have by \eqref{eq:disjointcnk} that 
$$
\nu (x: x \in \cup_k C_{N_{t+s},k}(s)) = \sum_k \nu (x: x \in C_{N_{t+s},k}(s)).
$$
We obtain the result by integrating with respect to $s \in I$ (using dominated convergence) and using
that $\vol(\mathbb R^2 / \hat{\mathcal V}) = ad$ and 
$\mu = \nu \otimes Leb_1/\nu(\tau)$.
\end{proof}

\begin{proof}[Proof of Proposition \ref{prop:E}]
Case {\bf (E)} can be reduced to Case {\bf (D)}  similarly to the reduction of Case {\bf (C)} 
to Case {\bf (B)}. We only list the adjustments needed to the proof of Proposition \ref{prop:pos}.
First, 
we  replace $S_{\psi}(N_{t+s},x) - W(t)$ by
$$
S_{\psi}(N_{t+s},x) - W(t) - \frac{c'}{d'} [S_{\hat \tau}(N_{t+s},x) -t]
$$
in  \eqref{XiD2} and in the definition of $C_{n,k}(s)$. Then, we replace 
$C^{(k)}$ by
$$
\{ (x,la + \frac{c'}{d'}(s-\rho- kd), s-\rho- kd,y) : x \in A, y \in B_{k,x} \}.
$$
Note that \eqref{Ckn} is unchanged. Finally,  instead of $(W(t) + la, s+t - \rho -kd) \in \hat{\mathcal V}$
we have
$$
\left( W(t) + la + \frac{c'}{d'}(s-\rho- kd), s+t - \rho -kd \right) \in \hat{\mathcal V}.
$$
The last inclusion holds by conditions  (2) and (4) 
of Proposition \ref{prop:E}. Now we can apply 
{\bf (H6)} as before.
\end{proof}

\section{Expanding Young towers}
\label{secpoly}

\subsection{Setup and results}
\label{sec:towerpolydef}

Let $(\Delta, \tilde \nu)$ be a probability space with a partition 
$(\Delta_{k,l})_{ k \in I,
  l \leq r_k}$
into positive measure subsets, where $I$ is either finite or countable and 
$r_k = r(\Delta_{0,k})$ is a positive
integer. Let $F: \Delta \rightarrow \Delta$ be a map that satisfies the following
\begin{enumerate}
\item[(A1)] for every $i \in I$ and $0 \leq j < r_i -1$, $F$ is a measure preserving isomorphism
between $\Delta_{i,j}$ and $\Delta_{i,j+1}.$
\item[(A2)] for every $i \in I$, $F$ is an isomorphism between $\Delta_{i, r_i -1}$
and $$X:= \Delta_0 := \cup_{i \in I} \Delta_{i,0}.$$
\item[(A3)] Let $r(x) = r(\Delta_{0,k})$ if $x \in \Delta_{0,k}$ and 
$T:X \rightarrow X$ be the first return map to the base, i.e. $T(x) = F^{r(x)} (x)$.
Let $s(x,y)$, the separation time of $x,y \in X$, be defined as the smallest integer $n$
such that $T^n x \in \Delta_{0,i}$, $T^n y \in \Delta_{0,j}$ with $i \neq j$. 
As $T: \Delta_{0,i} \rightarrow X$ is an isomorphism, it has an inverse. Denote by
$g$ the jacobian of this inverse (w.r.t. the measure $\tilde \nu$). Then there are constants
$\beta < 1$ and $C>0$ such that for every $x,y \in  \Delta_{0,i} $, 
$|\log g(x) - \log g(y) | \leq C \beta^{s(x,y)}.$
\item[(A4)] Extend $s$ to $\Delta$ by setting $s(x,y) = 0$ if $x,y$ do not belong to the same
$\Delta_{i,j}$ and $s(x,y) = s(F^{-j}x,F^{-j}y) + 1$ if $x,y \in \Delta_{i,j}$.
$(\Delta, \tilde \nu, F)$ is exact (hence ergodic and mixing) with respect to the
metric 
\begin{equation}
\label{DefSymbMetr}
d(x,y) = \beta^{s(x,y)}.
\end{equation}
\end{enumerate}

\medskip\noindent
See \cite{Y99} for the introduction and several examples of such maps.

The measure defined by $\nu(A) = \tilde \nu (A) / \tilde \nu (X)$ for $A \subset X$ is invariant
for $T$.
Note that $\nu(A) = \nu(r) \tilde \nu (A)$.
We assume that
\begin{equation}
\label{TailPoly}
\nu (x: r(x) > n)  =  \nu(r) \tilde \nu (x \in X: r(x) > n) = O(n ^{- \beta})
\end{equation}
with $\beta >2$.

We consider the space of dynamically H\"older functions on $\Delta$: 
$$
C_{\varkappa}(\Delta, \mathbb R^d) = \{f: \Delta \rightarrow \mathbb R^d \text{ bounded and } \exists C: |f(x) -f(y)| \leq C 
\varkappa^{s(x,y)} \},
$$
where $\varkappa < 1$ is fixed and $s(x,y)$ is the separation time of $x$ and $y$ 
(there is no
major difference between H\"older and Lipschitz terminologies as one can increase 
$\beta<1$
in the definition of the metric \eqref{DefSymbMetr}).
We will use 
the notation $C_{\varkappa}(X, \mathbb C) $ for the space of functions with domain $X$ and range $\mathbb{C}$, 
defined analogously to $C_{\varkappa}(\Delta, \mathbb{R}^d).$
The corresponding norm is
$$
\|  f\|_{\varkappa} = \inf \{ C: | f(x) -  f(y)| \leq C \varkappa^{s(x,y)} \forall x,y  \} + \|  f \|_{\infty} .
$$
To given $ f \in C_{\varkappa}(\Delta, \mathbb R^d)$, we associate the function
$f_X : X \rightarrow \mathbb R^d$ where
\begin{equation}
\label{IndSum}  
\displaystyle f_X(x) = \sum_{i=0}^{r(x)-1} f(F^i x).
\end{equation}

\begin{theorem}
\label{propYoungpoly}
 Let $T: X \rightarrow X$ be as above.
 Assume that $(\check{\bm \varphi}, \tau) = f_X$ for some $f = 
(\check{\tilde{\bm \varphi}}, \tilde \tau)
\in C_{\varkappa}(\Delta, \mathbb R^2)$, 
where $f_X$ is non-degenerate and $\tau$ is bounded away from zero.
 Then  {\bf (H1)}, {\bf (H2)}, {\bf (H6)} and {\bf (H7)} hold.
\end{theorem}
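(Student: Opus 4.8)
The plan is to reduce the four hypotheses {\bf (H1)}, {\bf (H2)}, {\bf (H6)}, {\bf (H7)} to a single local central limit theorem with test functions for the tower map $F$ together with a tail estimate for $r$. The key observation is that {\bf (H1)} is the special case of {\bf (H6)} obtained by projecting $f_X = (\check{\bm\varphi},\tau)$ to its second coordinate, and {\bf (H2)} is likewise the second-coordinate projection of {\bf (H7)}; so it suffices to establish {\bf (H6)} and {\bf (H7)}. For {\bf (H7)} (moderate deviations), the tail assumption \eqref{TailPoly} with $\beta>2$ is exactly what is needed: one first shows $S_\tau(n,x)/n\to\nu(\tau)$ with enough control (a large deviation or Rosenthal-type moment bound for H\"older observables over a Young tower) so that $\nu(x: |S_\tau(n,x) - n\nu(\tau)| > \epsilon n)$ decays polynomially fast, then sums over $|n-w|>K\sqrt w$; the extra factor $w^{1/2}$ is absorbed because each term in the sum is $O(1/\sqrt n)$ by the local limit estimate and the number of relevant $n$ is controlled. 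I expect to invoke standard results on statistical properties of Young towers (Young \cite{Y99}, and the operator-renewal / spectral machinery of Sarig, Gou\"ezel, Melbourne) rather than reprove them.

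The heart of the matter is {\bf (H6)}: the MLCLT with test functions for $(T, f_X)$ where $f_X = (\check{\bm\varphi},\tau)\in\mathbb R^2$. Here I would pass from the first return map $T$ on $X$ back up to the tower map $F$ on $\Delta$, for which $f=(\check{\tilde{\bm\varphi}},\tilde\tau)$ is genuinely dynamically H\"older and \eqref{TailPoly} gives integrable, in fact polynomially-decaying, return times. The standard route is the Nagaev--Guivarc'h perturbative spectral method: study the twisted transfer operators $\mathcal L_\xi h = \mathcal L(e^{i\langle\xi,f\rangle}h)$ acting on $C_\varkappa(\Delta,\mathbb C)$ (or on the corresponding anisotropic space for the tower), show that $\mathcal L_0$ has a spectral gap, that $\xi\mapsto\mathcal L_\xi$ is analytic near $0$, and hence $\mathcal L_\xi$ has a leading eigenvalue $\lambda(\xi) = e^{i\langle\xi,\nu(f)\rangle - \frac12\langle\Sigma\xi,\xi\rangle + o(|\xi|^2)}$ with $\Sigma$ positive definite precisely because $f_X$ is non-degenerate in the sense of Definition \ref{def:nd}. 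Non-degeneracy must also be used to identify the minimal translated subgroup $M = M(\check{\bm\varphi},\tau)$ and shift $r$: one shows that for $\xi$ outside a discrete ``bad set'' the spectral radius of $\mathcal L_\xi$ is $<1$ (this is the aperiodicity / Bradley--type dichotomy argument — $|\lambda(\xi)|=1$ forces a coboundary equation $e^{i\langle\xi,f\rangle} = \overline{c}\,(v\circ F)/v$, which by Definition \ref{def:nd} can only happen on the dual lattice of $M$), and that on $M$'s dual the operator has modulus-one eigenvalues giving the translation $r$. Then the cohomological decomposition \eqref{deccoh} $f_X = g - h + h\circ T$ with $g$ minimal and $h$ bounded and a.e.\ continuous comes out of the eigenfunction: $h$ is built from $\log v$, and boundedness/continuity follow from the H\"older regularity on the tower (one must check $h$ descends to a bounded, $\nu$-a.e.\ continuous function on $X$, using that $\tilde\tau$ is integrable).

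Once the spectral picture is in place, the MLCLT with test functions \eqref{MLCLTtestf} is obtained by Fourier inversion: write $n^{d/2}\int \mathfrak x(x)\mathfrak y(T^nx)\mathfrak z(S_g(n,x) - n\nu(g) - w_n)\,d\nu$ as an integral over $\xi$ of $\mathfrak z$'s Fourier transform times $\int \mathfrak x\cdot (\mathcal L_\xi^n(\mathfrak y\cdot))\,d\nu$ (with $g$ in place of $f$ — the coboundary $h$ is harmless inside the twisted operator), split the $\xi$-integral into a neighborhood of the ``good'' frequencies (the dual of $M$) where the leading eigenvalue dominates and rescaling gives the Gaussian $\fg_\Sigma(w)$ together with $\int\mathfrak x\,d\nu\int\mathfrak y\,d\nu\int\mathfrak z\,du_M$, and a complement where the spectral radius bound $<1$ gives exponential decay; uniformity for $w$ in compacts and fixed $K$ is routine from the uniform control of the remainder. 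The main obstacle, and where the real work lies, is precisely the aperiodicity step combined with a uniform (in $\xi$) bound on $\|\mathcal L_\xi^n\|$ away from zero for a \emph{Young tower with only polynomial return times}: unlike the finite-horizon Gibbs--Markov case one does not have a uniform spectral gap, so one needs either Gou\"ezel's operator-renewal estimates or Melbourne--T\"or\"ok type arguments to control the tail contributions and still extract $n^{-d/2}$ decay; the condition $\beta>2$ is exactly the threshold that makes this analysis go through (it guarantees $\tilde\tau\in L^2$ and the requisite summability of the renewal series).
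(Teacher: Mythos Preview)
Your route to {\bf (H6)} differs from the paper's in a way that makes your life harder than it needs to be. You propose to run Nagaev--Guivarc'h on the \emph{full tower} $(\Delta,F)$ and then invoke operator-renewal to cope with the lack of a spectral gap. The paper instead works directly with the transfer operator of the \emph{first return map} $T$ on the base $X$: since $T:\Delta_{0,k}\to X$ is a full-branched uniformly expanding map with bounded distortion, $(X,T,\nu)$ is Gibbs--Markov and its transfer operator \emph{does} have a spectral gap on $C_{\varkappa'}(X,\mathbb C)$. The twisted operators $P_t u=P(e^{i\langle t,f_X\rangle}u)$ then satisfy a Lasota--Yorke inequality uniformly for bounded $t$ (this is the content of \cite[Lemma~4.1]{G05} and \cite[Prop.~2.1]{AD01}, extended to $\mathbb R^2$), so the standard perturbative picture applies with no renewal machinery. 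Your concern about polynomial tails killing the spectral gap is thus misplaced: the tail exponent $\beta>2$ enters only to guarantee $f_X\in L^2$ and to control the Taylor expansion of the leading eigenvalue $\lambda_t$ near $t=0$. Note also that working on $\Delta$ you would prove MLCLT for $(F,f)$, not for $(T,f_X)$; the transfer between the two is an extra step you do not address.

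There is a genuine gap in your treatment of {\bf (H7)}. The argument ``each term is $O(n^{-d/2})$ by the local limit estimate and the number of relevant $n$ is controlled'' does not close: for $d=2$ one has $w^{1/2}\sum_{|n-w|\le w} n^{-1}\asymp w^{1/2}$, which diverges. What the paper actually does is split the sum over $n$ into five ranges and use different tools in each. In the near range $|n-w|\in[K\sqrt w,\,w^\gamma]$ (with $\gamma=\min\{\beta/4,3/4\}$) one needs the \emph{refined} local bound
\[
\nu\bigl(S_{f_X}(n,\cdot)-n\nu(f_X)\in B(v,R)\bigr)\le C\bigl(n^{-d/2}\fg_\sigma(v/\sqrt n)+n^{-(d+\beta)/2+1}+n^{-(d+1)/2}\bigr),
\]
where the Gaussian factor supplies the missing decay in $v$. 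In the far range $|n-w|>w^\gamma$ this is still not enough (the non-Gaussian error terms carry no decay in $v$), and one needs a \emph{local} moderate deviation estimate: $\nu(S_{f_X}(n,\cdot)-n\nu(f_X)\in B(v,R))$ must be shown to be small when $|v|\ge n^{1/2+\varepsilon}$. The paper obtains this by a decoupling trick: split $[0,n]$ into two halves, observe that on at least one half the partial sum deviates by $\ge |v|/2$, bound that half by the \emph{global} moderate deviation estimate (which you do mention), and bound the other half by the uniform local bound $O(n^{-d/2})$; bounded distortion makes the two halves quasi-independent. Without this local-from-global step, your sum over $n$ with $|n-w|$ of order $w$ does not converge.
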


Before proving Theorem \ref{propYoungpoly}, we make several remarks and derive 
several corollaries from that theorem.

Note that $\beta = \alpha + 1$ in the notation of \cite{Y99}. 
By the results of \cite{Y99}, $C_{\varkappa}(\Delta, \mathbb R)$ 
observables decorrelate at the speed $O(n^{-\beta +1})$
and satisfy the CLT as long as $\beta >2$. 

We would like to conclude the MLCLT for suspensions over
some maps which can be modeled
by a tower with polynomial tails. 
Let $\mathbb  F :\mathbb M \rightarrow \mathbb M$ be a map on a compact Riemannian manifold
$\mathbb M$
with invariant measure $\lambda$ that satisfies assumptions 1-4 in Section 1.3.1. of \cite{G05} 
(in \cite{G05}, $\mathbb M,\mathbb F,\lambda$ are denoted by $X,T,\nu$, respectively). 
We also
assume that $\lambda$ is the unique SRB measure for $\mathbb F.$
Let $\upsilon$ be a H\"older roof function on $\mathbb M$ and $\Psi$ be the corresponding suspension 
(semi-)flow on the phase space
$\aleph$.
Let $\bm \chi: \aleph \rightarrow \mathbb R$ be a zero mean continuous observable such that
$\check{\bm \chi}$ is H\"older.
Then, as explained in Section 1.3.1. of \cite{G05}, we can construct a tower $(\Delta, F, \tilde \nu)$,
with $X = \Delta_0 \subset \mathbb M$
satisfying assumptions (A1)--(A4) above
and a H\"older mapping $\rho: \Delta \rightarrow \mathbb M$ so that 
$\rho \circ F=\mathbb F \circ \rho$ and $\rho_* \tilde \nu = \lambda$.
Define $\tilde \tau: \Delta \rightarrow \mathbb R_+ $ 
by $\tilde \tau (x) = \upsilon (\rho(x))$. Let $\tilde \Omega$ be the phase space of the suspension (semi-)flow over 
 $(\Delta, F, \tilde \nu)$ with roof function $\tilde \tau$ and let 
 $\tilde{\bm \varphi}: \tilde \Omega \rightarrow \mathbb R$ be defined by
 $\tilde{\bm \varphi} (x,s) = \bm \chi(\rho(x), s)$. As before we let $(X,T,\nu)$
be the first return to the base of the tower. 
Let 
$\Omega$ be the phase space of the 
suspension (semi-)flow over $(X,T,\nu)$ with roof function $\tau = \tilde{\tau}_X$.
Let ${\bm \varphi}: \Omega \rightarrow \mathbb R$ be defined by
 ${\bm \varphi} (x,s) = \tilde{ \bm \varphi} (x,s)$
(here, $s \in [0, \tau (x))$, thus $\tilde{ \bm \varphi} (x,s)$ is to be interpreted with the usual identification
$(x, \tilde \tau(x)) = (F x, 0) \in \tilde \Omega$).
Note that $(\aleph, \kappa =  \frac{1}{\lambda(\upsilon)} \lambda \otimes Leb, \Psi^t)$
is a factor of $(\Omega, \mu=\frac{1}{\nu(\tau)}\nu \otimes Leb, \Phi^t)$. Indeed,
the mapping $\iota : \Omega \rightarrow \aleph$, 
$\iota (x,s) = (x,s)$ is a homomorphism (mind the identification
$(x, \upsilon(x)) = (\mathbb F(x), 0) \in \aleph$) which is is in general not invertible.
We can lift up the test function $\mathfrak X: \aleph \rightarrow \mathbb R$, 
to ${\mathfrak V}: \Omega \rightarrow \mathbb R$ by ${\mathfrak V} = 
\mathfrak X \circ \iota$ (similarly, let 
${\mathfrak W} =
\mathfrak Y \circ \iota$).
Now we have by definition
\begin{align}
&\int_{\aleph} \mathfrak X (x,s) \mathfrak Y( \Psi^t (x,s))
\mathfrak Z \left( \int_0^t \bm \chi(\Psi^{s'}(x,s))ds'   - W(t) \right) d\kappa(x,s) \nonumber \\
=&\int_{\Omega} {\mathfrak V} (x, s) {\mathfrak W}( \Phi^t (x,s))
\mathfrak Z \left( \int_0^t {\bm \varphi}(\Phi^{s'}(x,s))ds'   - W(t) \right) d\mu(x,s). 
\label{eq:factor2}
\end{align}

This identity, combined with Theorem \ref{propYoungpoly} readily gives

\begin{proposition}
\label{corpoly1}
In the setup of the previous paragraph, let us assume that
$(\Delta, F, \tilde \nu)$ satisfies \eqref{TailPoly} with $\beta >2$.
Furthermore, assume that $(\check{\bm \varphi}, \tau)$ is minimal and its linearized
group falls into cases {\bf (A)}, {\bf (B)} or {\bf (C)}. 
Then 
the conclusion of Theorem \ref{thm1} holds for $(\Phi, \bm \varphi)$
with $h(x) = 0$, $h_{\tau}(x) = 0$.
\end{proposition}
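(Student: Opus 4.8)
The plan is to verify hypotheses {\bf (H1)}--{\bf (H7)} for $(\Phi,\bm\varphi)$ and then quote Theorem~\ref{thm1}; the substantive work is already contained in Theorems~\ref{propYoungpoly} and~\ref{thm1}, so what remains is bookkeeping along the semiconjugacy set up before the statement. By the tower construction one has $\tau=\tilde\tau_X$ with $\tilde\tau=\upsilon\circ\rho$, and $\check{\bm\varphi}=(\check{\tilde{\bm\varphi}})_X$ with $\check{\tilde{\bm\varphi}}(x)=\int_0^{\tilde\tau(x)}\bm\chi(\rho(x),s)\,ds=\check{\bm\chi}(\rho(x))$; hence $(\check{\bm\varphi},\tau)=f_X$ for $f=(\check{\tilde{\bm\varphi}},\tilde\tau)$. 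Since $\rho$ is H\"older and $\upsilon,\check{\bm\chi}$ are H\"older, $f\in C_\varkappa(\Delta,\mathbb R^2)$ after shrinking $\varkappa$; moreover $\tau\ge\inf\upsilon>0$ is bounded away from zero, and the non-degeneracy of $f_X$ demanded by Theorem~\ref{propYoungpoly} is implicit in the hypothesis that the linearized group falls into one of the cases {\bf (A)}--{\bf (C)} (cases {\bf (A)}--{\bf (E)} being the classification of the non-degenerate situation under {\bf (H1)}--{\bf (H7)}). Together with the standing assumption \eqref{TailPoly} with $\beta>2$, Theorem~\ref{propYoungpoly} then delivers {\bf (H1)}, {\bf (H2)}, {\bf (H6)} and {\bf (H7)}.

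It remains to check {\bf (H3)}, {\bf (H4)}, {\bf (H5)}. Since $\bm\varphi=\bm\chi\circ\iota$ and $\iota_*\mu=\kappa$, one has $\mu(\bm\varphi)=\kappa(\bm\chi)=0$, which is {\bf (H4)}; and $\bm\chi$ is continuous and bounded on the compact space $\aleph$ while $\iota$ is $\mu$-almost everywhere continuous, so $\bm\varphi$ is bounded and $\mu$-almost everywhere continuous, which is {\bf (H5)}. For {\bf (H3)} I would argue by contradiction via Proposition~\ref{prop1}: with {\bf (H1)} and {\bf (H2)} in force, were $\Phi$ not mixing, then by part (c) of that proposition $\tau$ would be cohomologous to a $b\mathbb Z$-valued function for some $b>0$, so that $(\check{\bm\varphi},\tau)$ would be cohomologous to a function valued in $\mathbb R\times b\mathbb Z$, forcing $\hat{\mathcal V}\subset\mathbb R\times b\mathbb Z$ and in particular making the second-coordinate projection of $\hat{\mathcal V}$ discrete --- impossible in cases {\bf (A)}--{\bf (C)}, where that projection is dense. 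Hence $\Phi$ is mixing.

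With {\bf (H1)}--{\bf (H7)} established and the linearized group in case {\bf (A)}, {\bf (B)} or {\bf (C)}, Theorem~\ref{thm1} gives the MLCLT for $(\Phi,\bm\varphi)$ with the corresponding $\mathcal V,R,H$. Finally, the assumption that $(\check{\bm\varphi},\tau)$ is minimal says precisely that the decompositions \eqref{defh} and \eqref{defh_tau} hold with $h\equiv0$ and $h_\tau\equiv0$; substituting these into the formulas for $H$ in Theorem~\ref{thm1} yields the asserted conclusion. The only step needing genuine care is {\bf (H3)}: correctly identifying the coordinate projections of $\hat{\mathcal V}$ and tying them to the mixing criterion of Proposition~\ref{prop1}; all the rest is a routine transport of H\"older regularity, zero mean, and continuity along $\iota$, followed by applications of the two main theorems of this section.
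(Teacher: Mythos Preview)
Your proof is correct and follows essentially the same approach as the paper, which merely says the result ``readily'' follows from Theorem~\ref{propYoungpoly} (together with the factor identity \eqref{eq:factor2} and the setup preceding it). You have made explicit the verification of {\bf (H3)}--{\bf (H5)} and the use of minimality to set $h=h_\tau=0$, all of which the paper leaves to the reader; in particular your derivation of {\bf (H3)} from Proposition~\ref{prop1} via the second-coordinate projection of $\hat{\mathcal V}$ is exactly the contrapositive of the observation the paper makes just before listing cases {\bf (A)}--{\bf (E)}.
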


Sometimes the flow $\Psi^t$ does not have a canonical coding as a suspension of a Young tower map.
For example, $\Psi^t$ can be given by an ODE. In this case
it is important to reformulate 
Proposition \ref{corpoly1} as an MLCLT for $(\Psi, \bm \chi)$. 
However, if we want to do so, the following two difficulties arise:

\begin{enumerate}
\item[(D1)] $H: \Omega \rightarrow \mathbb R$, given by Theorem \ref{thm1} may not be the lift-up of a function on 
$\aleph$.
\item[(D2)] for given bounded and continuous test functions $\mathfrak X,\mathfrak Y: 
\aleph \rightarrow \mathbb R$, the corresponding lift-ups $\mathfrak V, \mathfrak W: \Omega \rightarrow \mathbb R$
may not be supported on $\{ (x,s) \in \Omega: s \leq M\}$ for some finite $M$.
\end{enumerate}

It is easy to overcome (D1) in Case {\bf (A)} since $H=0$. In Cases {\bf (B)} and {\bf (C)}
we do not know how to overcome (D1) in general. However, at least we can distinguish between cases {\bf (A)}, {\bf (B)} and {\bf (C)}
by only looking at the manifold itself (rather than at the tower).

\begin{proposition}
\label{proppoly2}
$\hat{\mathcal V}_{(\check{\bm \chi}, \upsilon)}$ and 
$\hat{\mathcal V}_{(\check{\bm \varphi}, \tau)} $ fall into the same case {\bf (A)} - {\bf (E)}.
\end{proposition}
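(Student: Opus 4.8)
The plan is to show that the linearized groups $\hat{\mathcal V}_{(\check{\bm\chi},\upsilon)}$ and $\hat{\mathcal V}_{(\check{\bm\varphi},\tau)}$ are literally \emph{equal} (as subgroups of $\mathbb R^2$), which is more than enough since the classification into cases \textbf{(A)}--\textbf{(E)} depends only on the group. Recall that the linearized group is the closed group generated by $M(\cdot)$ and $r(\cdot)$, and that by Lemma \ref{lemma:nonmin} (and the uniqueness remark after it) the pair $(M,r)$ is intrinsic to the cohomology class of the observable. So the task reduces to comparing cohomology classes under the semiconjugacy $\iota:\Omega\to\aleph$ and, on the base, under $\rho:\Delta\to\mathbb M$ together with the inducing from $(\Delta,F)$ to $(X,T)$.

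First I would unwind the two inducing steps. The flow observable $(\check{\bm\chi},\upsilon)$ on $\aleph$ pulls back under $\rho$ to $(\check{\tilde{\bm\varphi}},\tilde\tau)\in C_\varkappa(\Delta,\mathbb R^2)$, and then inducing to the first-return map $T$ on $X=\Delta_0$ produces $(\check{\bm\varphi},\tau)=f_X$ in the notation of \eqref{IndSum}. The key point is a general principle: if $f\in C_\varkappa(\Delta,\mathbb R^d)$ and $f_X$ is its induced sum over the first-return map, then $f$ and (the $F$-suspension constant extension of) $f_X$ have the same "translated subgroup" data — more precisely, $M(f_X)=M(f)$ and $r(f_X)=r(f)$ where these are defined for $F$ and $T$ respectively — because a coboundary equation over $T$ can be transported to one over $F$ via the standard telescoping of the coboundary along return orbits, and conversely. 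This is essentially the statement that the Aaronson--Denker/essential value or "arithmetic" data is preserved under inducing, and I would either cite it or give the short telescoping argument: given $f = a + \ell + m\circ F - m$ on $\Delta$ with $\ell\in L$ a sublattice, summing over a return block of length $r(x)$ gives $f_X = r(x)a' + \ell' + m(Tx) - m(x)$ with the same generated group; the converse uses that $F$-orbits are nested inside $T$-return blocks. Because $\rho$ is a measure isomorphism onto $(\mathbb M,\lambda)$ that conjugates $F$ to $\mathbb F$, the same data matches between $(\check{\bm\chi},\upsilon)$ on $\mathbb M$ and $(\check{\tilde{\bm\varphi}},\tilde\tau)$ on $\Delta$ trivially.

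Next I would handle the flow-level comparison, i.e. relate $\hat{\mathcal V}_{(\check{\bm\chi},\upsilon)}$ defined from the return map $\mathbb F$ on $\mathbb M$ to the group $\hat{\mathcal V}_{(\check{\bm\varphi},\tau)}$ defined from $T$ on $X$. But note $X=\Delta_0\subset\mathbb M$ and $T$ \emph{is} itself the first return of $F$ — and simultaneously, after pushing forward by $\rho$, $T$ is realized on a subset of $\mathbb M$; the roof $\tau=\tilde\tau_X$ is exactly the return-time sum of $\upsilon\circ\rho$. So the flow $\Phi^t$ on $\Omega$ and the flow $\Psi^t$ on $\aleph$ are the \emph{same flow} up to the measure-preserving semiconjugacy $\iota$, which is a genuine isomorphism of the flows because suspending a first-return map with the induced roof reconstructs the original suspension flow. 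Hence $(\check{\bm\varphi},\tau)$ and $(\check{\bm\chi},\upsilon)$ are the section observables of the \emph{same} flow observable relative to two different cross-sections, and the linearized group — being built from the translation data of $\tau$ together with $M(\check{\bm\varphi},\tau)$ — is a flow invariant: it equals the closed subgroup generated by the set of "arithmetic periods" of the pair (flow displacement of $\bm\varphi$, return time), which does not depend on the choice of cross-section. I would make this last assertion precise by invoking Proposition \ref{prop1}'s dichotomy for the $\tau$-component (which pins down $\pi_2\hat{\mathcal V}$ as a flow-mixing invariant) and Lemma \ref{lemma:nonmin}'s uniqueness for the full pair.

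The main obstacle I expect is precisely the claim that the $(M,r)$ data is unchanged under the two inducing operations ($\Delta\rightsquigarrow X$) and under passing between cross-sections at the flow level — this requires care because inducing can \emph{a priori} shrink or enlarge the group (e.g. an $F$-coboundary need not restrict to a $T$-coboundary with controlled regularity, and vice versa the return-block telescoping sum $m(Tx)-m(x)$ must be checked to land in the right function space, here one only needs measurability for the definition of $M(f_X)$, so boundedness of the tower transfer machinery is not even needed). I would resolve it by working purely at the level of cohomology classes with \emph{measurable} transfer functions, where the inducing correspondence is a clean bijection on cohomology classes preserving the "values group," so that $M$ and $r$ transport exactly. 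Everything else — the identification of $\Phi$ and $\Psi$ up to isomorphism, the invariance of $\hat{\mathcal V}$ as the closure of $\langle M,r\rangle$ — is then formal, and the conclusion that both linearized groups fall into the same case \textbf{(A)}--\textbf{(E)} is immediate because that trichotomy/pentachotomy is a function of the isomorphism type of the closed subgroup alone.
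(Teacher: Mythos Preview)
Your inducing argument between $(\Delta,F)$ and $(X,T)$ is essentially correct and matches the paper's Lemma~\ref{lem:suspgroup}. The gap is in the other step: you assert that ``$\rho$ is a measure isomorphism onto $(\mathbb M,\lambda)$'' and that therefore the arithmetic data matches ``trivially'' between $(\check{\bm\chi},\upsilon)$ on $\mathbb M$ and $(\check{\tilde{\bm\varphi}},\tilde\tau)$ on $\Delta$. But $\rho$ is only a \emph{semiconjugacy}; the paper says explicitly that the induced map $\iota:\Omega\to\aleph$ ``is in general not invertible.'' A measurable coboundary equation on $\Delta$ need not descend to $\mathbb M$ (the transfer function $h$ may fail to be constant on the fibers of $\rho$), so $M(f_\Delta)$ can be a \emph{proper} subgroup of $M(f_{\mathbb M})$. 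Your claimed equality $\hat{\mathcal V}_{(\check{\bm\chi},\upsilon)}=\hat{\mathcal V}_{(\check{\bm\varphi},\tau)}$ is therefore not justified and is in general false.

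The paper's fix is Lemma~\ref{lem:Gouezel1.4}: using \cite[Theorem~1.4]{G05} (reduced to the one-dimensional case via coordinate projections), one shows that $M(f_\Delta)$ is a \emph{finite index} subgroup of $M(f_{\mathbb M})$, with compatible shifts $r$. This is a nontrivial input---it uses the specific structure of the Young tower, not just abstract measurable cohomology. Combining this with Lemma~\ref{lem:suspgroup}, one linearized group is a finite index subgroup of the other, and finite index subgroups of closed subgroups of $\mathbb R^2$ fall into the same case {\bf(A)}--{\bf(E)}. So the weaker conclusion (same case, not same group) is what is actually provable, and it suffices.
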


Next, we introduce the notion of minimal group of an observable for an abstract dynamical system.


\begin{definition}
\label{defgroup}
Given any dynamical system $\mathcal T$ on a measurable space $(\mathcal X, \zeta)$ and an observable 
$u: \mathcal X \rightarrow \mathbb R^d$, we define
$S(u)$ as the minimal closed group, a translate of which supports 
the values of  $u$.
Let us define the minimal closed group $M(u)$ 
by $M(u) = \cap_{v \sim u} S(v)$, where $v \sim u$ means that 
$u + h - h \circ \mathcal T$ holds with some $h = h_u: \mathcal X \rightarrow \mathbb R^d$ measurable.
Let $r(u) \in \mathbb R^d / M(u)$ be the translate, i.e. $range(v) \subset M(u) + r(u)$ with some $v \sim u$.
A function $u$ is called minimal if $M(u) = S(u)$ and is called non-arithmetic if $M(u) = \mathbb R^d$.
\end{definition}

The notation $M(u), r(u)$ 
was used in Sections \ref{sec2} and~\ref{sec3} where $M$ was the symmetry group 
of the local distribution in the MCLLT and $r$ was the shift.
 We will see in the proof of Theorem \ref{propYoungpoly}, that for Young towers
the local limit theorem holds with $M,r$ given by Definition \ref{defgroup}.

Our next result lifts the MLCLT to $\Psi$ in a special case.

\begin{proposition}
\label{propunbdH}
In the setup of Proposition \ref{corpoly1}, let us also assume that 
$(\check{\bm \chi}, \upsilon)$ is non-arithmetic. Then for any continuous $\mathfrak X, \mathfrak Y: \aleph \rightarrow \mathbb R$,
any continuous and compactly supported $\mathfrak Z: \mathbb R \rightarrow \mathbb R$,
and any $W(t)$ with $W(t) / \sqrt t \rightarrow W$,
\begin{align}
&\lim_{t \rightarrow \infty} t^{1/2} \int_{\aleph} \mathfrak X (x,s) \mathfrak Y( \Psi^t (x,s))
\mathfrak Z \left( \int_0^t \bm \chi(\Psi^{s'}(x,s))ds'   - W(t) \right) d\kappa(x,s) \nonumber \\
&= \mathfrak g_{\Sigma} (W) 
\int \mathfrak X d \kappa \int \mathfrak Y d \kappa \int \mathfrak Z d Leb .
\nonumber
\end{align}
\end{proposition}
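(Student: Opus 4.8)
The plan is to reduce the statement to the MLCLT for $(\Phi,\bm\varphi)$ on $\Omega$ already available from Theorem~\ref{thm1}, and then to remove the restriction that the base/top test functions be supported in a region $\{s\le M\}$ of bounded fiber height, by a truncation argument. The non-arithmeticity hypothesis is used precisely so that the coboundary $H$ produced by Theorem~\ref{thm1} vanishes, so no descent of $H$ from $\Omega$ to $\aleph$ is required.

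\textbf{Reduction.} First I would record that by Proposition~\ref{proppoly2} the non-arithmeticity of $(\check{\bm\chi},\upsilon)$ forces Case~{\bf (A)} for $(\check{\bm\varphi},\tau)$, so (using that Proposition~\ref{corpoly1} applies with $h\equiv h_\tau\equiv 0$) Theorem~\ref{thm1}~{\bf (A)} gives the MLCLT for $(\Phi,\bm\varphi)$ with $\mathcal V=\mathbb R$, $R=0$, $H\equiv 0$ and $\Sigma=\Sigma(\bm\varphi)$. Since $\iota$ intertwines the flows, $\iota_*\mu=\kappa$ and $\bm\varphi=\bm\chi\circ\iota$, identity \eqref{eq:factor2} shows the integral in Proposition~\ref{propunbdH} equals $t^{1/2}\int_\Omega \mathfrak V\,(\mathfrak W\circ\Phi^t)\,\mathfrak Z(\int_0^t\bm\varphi(\Phi^{s'}(\cdot))ds'-W(t))\,d\mu$ with $\mathfrak V=\mathfrak X\circ\iota$, $\mathfrak W=\mathfrak Y\circ\iota$ (both bounded and continuous on $\Omega$, as $\iota$ is continuous), and that the target value is $\fg_\Sigma(W)\int\mathfrak V\,d\mu\int\mathfrak W\,d\mu\int\mathfrak Z\,dLeb$. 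Thus it suffices to prove that the MLCLT of Definition~\ref{defMLCLT} for $(\Phi,\bm\varphi)$ (with $H\equiv0$) in fact holds for \emph{all} bounded continuous $\mathfrak V,\mathfrak W:\Omega\to\mathbb R$, not only those supported in $\{s\le M\}$. Equivalently, $\Omega$ with $\Phi$ is also the suspension over the full tower $(\Delta,F,\tilde\nu)$ with the \emph{bounded} roof $\tilde\tau=\upsilon\circ\rho$, so by Remark~\ref{bddtau} this is just the MLCLT for $(\Phi,\bm\varphi)$ in its natural bounded-roof formulation.

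\textbf{Truncation.} Fix $M>0$ (any $M$ is admissible, $\mu\{s=M\}=0$) and write $\mathfrak V=\mathfrak V\cdot 1_{\{s\le M\}}+\mathfrak V\cdot 1_{\{s>M\}}$ and likewise for $\mathfrak W$. The functions $\mathfrak V\cdot 1_{\{s\le M\}},\mathfrak W\cdot 1_{\{s\le M\}}$ are bounded, $\mu$-a.e.\ continuous and supported in $\{s\le M\}$, so the main term $t^{1/2}\int (\mathfrak V1_{\{s\le M\}})\,((\mathfrak W1_{\{s\le M\}})\circ\Phi^t)\,\mathfrak Z(\ldots)\,d\mu$ converges to $\fg_\Sigma(W)\int\mathfrak V1_{\{s\le M\}}d\mu\int\mathfrak W1_{\{s\le M\}}d\mu\int\mathfrak Z$ by the established MLCLT, Remark~\ref{remsets2} and a routine approximation of bounded a.e.-continuous functions by admissible indicators; as $M\to\infty$ this tends to the target value by dominated convergence. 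The three cross terms are bounded by $C\|\mathfrak Z\|_\infty[\mathcal E_M(t)+\mathcal E_M'(t)]$, where with $L$ a bound for $\mathrm{supp}\,\mathfrak Z$,
\[
\mathcal E_M(t)=t^{1/2}\,\mu\Big((x,s):\ s>M,\ \big|\textstyle\int_0^t\bm\varphi(\Phi^{s'}(x,s))ds'-W(t)\big|\le L\Big)
\]
and $\mathcal E_M'(t)$ is the analogue with ``$s>M$'' replaced by ``$\Phi^t(x,s)$ has fiber height $>M$''. Hence the proposition follows by the standard ``$\limsup_t$ then $\lim_M$'' argument once one shows $\lim_{M\to\infty}\limsup_{t\to\infty}\mathcal E_M(t)=0$ and the same for $\mathcal E_M'$.

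\textbf{The tail estimate (main obstacle).} The hard part is this last estimate, i.e.\ controlling trajectories that linger in a tall fiber (equivalently, deep in a tower column) near time $0$ or near time $t$; this is where the polynomial tail exponent $\beta>2$ is essential. For $\mathcal E_M$ I would set $u=\tau(x)-s$, the time until the orbit next hits the base $X$, and split. If $u\ge\sqrt t$ the orbit spends a stretch of length $\ge\sqrt t$ in one fiber of height $>M+\sqrt t$; this set has $\mu$-measure $\le \nu(\tau>M+\sqrt t)=O((\sqrt t)^{-\beta})$ by \eqref{TailPoly}, so after multiplying by $t^{1/2}$ it is $O(t^{1-\beta/2})\to0$. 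If $u<\sqrt t$, then $\int_0^t\bm\varphi(\Phi^{s'}(x,s))ds'=A(x,u)+\int_0^{t-u}\bm\varphi(\Phi^{s'}(Tx,0))ds'$ with $|A(x,u)|\le\|\bm\varphi\|_\infty u$; decomposing $u\in[2^j,2^{j+1}]$, pushing the $d\nu$-integral forward by the transfer operator of $T$ (whose bounded distortion (A3) gives $\|\mathcal L\,1_{\{\tau>a\}}\|_\infty\le C\nu(\tau>a)$), and bounding the residual probability $\nu\big(y:\int_0^{t-u}\bm\varphi(\Phi^{s'}(y,0))ds'-W(t)\in(\text{window of length }O(2^j))\big)$ by $C\,2^j t^{-1/2}$ via the local limit theorem {\bf (H1)}/{\bf (H6)} for $(\check{\bm\varphi},\tau)$ (supplied by Theorem~\ref{propYoungpoly}) together with {\bf (H2)}, one gets a contribution $\lesssim t^{1/2}\sum_j 2^{2j}t^{-1/2}\nu(\tau>c(M+2^j))$. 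Since $\nu(\tau>a)=O(a^{-\beta})$ with $\beta>2$, the geometric sum is dominated by the term with $2^j\sim M$ and equals $O(M^{2-\beta})$, which goes to $0$ uniformly in $t$. The term $\mathcal E_M'$, where the tall fiber occurs near time $t$, is handled analogously using the $\Phi$-invariance of $\mu$ (equivalently, decomposing according to the fiber occupied at time $t$, the relevant rare event being that the last base-return before $t$ lands in $\{\tau>M\}$), with the same tail and LLT inputs.
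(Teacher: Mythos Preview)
Your overall strategy coincides with the paper's: reduce to the MLCLT for $(\Phi,\bm\varphi)$ on $\Omega$ via \eqref{eq:factor2} and Proposition~\ref{proppoly2}, truncate the test functions at height $M$, and show the remainder is negligible. The reduction and truncation steps are fine.

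The gap is in your tail estimate. Your key claim
\[
\nu\Big(y:\Big|\int_0^{t-u}\bm\varphi(\Phi^{s'}(y,0))ds'-W(t)\Big|\le C2^j\Big)\le C\,2^j\,t^{-1/2}
\]
does \emph{not} follow from {\bf (H1)}/{\bf (H6)}/{\bf (H2)} alone. Write the flow integral as $S_{\check{\bm\varphi}}(N_{t-u},y)+\text{(partial)}$, where $|\text{partial}|\le\|\bm\varphi\|_\infty\,\tau(T^{N_{t-u}}y)$. Since $\tau$ is \emph{unbounded}, the partial term at the end of the orbit cannot be absorbed into a window of fixed size; controlling it amounts to controlling the landing cell, i.e.\ precisely the content of your $\mathcal E_M'$. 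Thus treating $\mathcal E_M$ and $\mathcal E_M'$ separately is circular: the bound for $\mathcal E_M$ implicitly needs $\mathcal E_M'$-type control, and conversely. Your sentence on $\mathcal E_M'$ (``analogously using $\Phi$-invariance of $\mu$'') does not break this circularity either, since $\Phi$ is only a semiflow here.

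The paper resolves this by proving and using a refined LLT \emph{with a landing-cell constraint} (Lemma~\ref{lemtowerLLT}(C)):
\[
\nu\big(S_{f_X}(n,\cdot)-n\nu(f_X)\in B(v,R),\ T^n(\cdot)\in\Delta_{0,l}\big)\le C\,\nu(\Delta_{0,l})\,n^{-d/2}(\cdots),
\]
which produces the factor $\nu(\Delta_{0,l})$ needed to sum over all landing cells (including tall ones) using $\sum_l r_l^2\nu(\Delta_{0,l})<\infty$ from $\beta>2$. The paper then partitions simultaneously by the starting cell $\Delta_{0,k}$ (handled by bounded distortion of $T|_{\Delta_{0,k}}$, your transfer-operator step) \emph{and} the landing cell $\Delta_{0,l}$ (handled by Lemma~\ref{lemtowerLLT}(C)), obtaining $S_3\le C\sum_{r_k\ge M}\sum_{r_l\ge M}r_k r_l(r_k+r_l)\nu(\Delta_{0,k})\nu(\Delta_{0,l})t^{-1/2}\le CM^{3-2\beta}t^{-1/2}$. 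Once you supply Lemma~\ref{lemtowerLLT}(C), your dyadic argument can be completed in the same way; without it, the claimed bound is unjustified.
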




\subsection{Proof of Proposition \ref{proppoly2}}

Let $f_{\mathbb M}: \mathbb M \rightarrow \mathbb R^d$ be an observable.
Let $f_{\Delta}: \Delta \rightarrow \mathbb R^d$ be its lift-up, i.e. 
$f_{\Delta} (x) = f_{\mathbb M} ( \rho(x)) $ and $f_{X}: X \rightarrow \mathbb R^d$ be the corresponding observable on $X$,
i.e. $f_X(x) = \sum_{i=0}^{r(x) -1} f_{\Delta} (F^ix)$.

\begin{lemma}
\label{lem:Gouezel1.4}
$M(f_{\Delta})$ is a finite index subgroup of $M( f_{\mathbb M})$ and $r(f_{\mathbb M}) = \iota' (r (f_{\Delta}))$, where $\iota'$ is the natural 
surjective homomorphism from $\mathbb R^d / M(f_{\Delta})$ to $\mathbb R^d / M(f_{\mathbb M})$.
\end{lemma}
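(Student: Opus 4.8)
The plan is to unwind the two constructions --- "lift to the tower $\Delta$" and "induce to the base $X$" --- and track how the minimal translated subgroup behaves under each. First I would set up the cohomological bookkeeping: a function $v \sim_F f_\Delta$ on $\Delta$ means $f_\Delta = v + h - h\circ F$ for some measurable $h:\Delta\to\mathbb R^d$, and similarly $\sim_{\mathbb M}$ on $\mathbb M$ and $\sim_T$ on $X$ for the induced function $f_X$. The key elementary observation is that pushing a coboundary down through $\rho$ works cleanly because $\rho\circ F = \mathbb F\circ\rho$: if $f_{\mathbb M} = v_{\mathbb M} + h_{\mathbb M} - h_{\mathbb M}\circ\mathbb F$, then composing with $\rho$ gives $f_\Delta = v_{\mathbb M}\circ\rho + h_{\mathbb M}\circ\rho - (h_{\mathbb M}\circ\mathbb F)\circ\rho = v_{\mathbb M}\circ\rho + h_{\mathbb M}\circ\rho - (h_{\mathbb M}\circ\rho)\circ F$, so $v_{\mathbb M}\circ\rho \sim_F f_\Delta$; hence $S(v_\Delta)\subseteq S(v_{\mathbb M}\circ\rho)\subseteq S(v_{\mathbb M})$ for the minimal $v_\Delta$, and intersecting over all $v_{\mathbb M}$ gives $M(f_\Delta)\subseteq M(f_{\mathbb M})$ together with compatibility of the shifts, i.e. $\iota'(r(f_\Delta))\supseteq$-side of the claim. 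The genuinely nontrivial direction is that $M(f_\Delta)$ has \emph{finite index} in $M(f_{\mathbb M})$, equivalently that every $F$-coboundary relation on $\Delta$ can, up to a finite-index correction, be realized by an $\mathbb F$-coboundary on $\mathbb M$.

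For that direction I would relate both $M(f_\Delta)$ and $M(f_{\mathbb M})$ to the induced function $f_X$ on the base $X$ via Definition \ref{defgroup}, and lean on the standard fact (this is exactly the content of Gou\"ezel's Lemma 1.4, which the citation acknowledges) that inducing and lifting through a finite-tower/Young-tower structure changes the minimal group only by a finite index. Concretely: $T = F^r$ is the first-return map, so an $F$-coboundary on $\Delta$ restricts to a $T$-coboundary on $X$ (and $f_\Delta$ induces to $f_X$ as in \eqref{IndSum}), giving $M(f_\Delta)\subseteq M(f_X)$ with finite index --- the index being controlled by the fact that a $T$-cohomology can be "un-induced" to $F$-cohomology on $\Delta$ after passing to a finite cover determined by the return-time partition. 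Symmetrically, $\mathbb F$ admits $\rho(X)$ (or rather $X$ itself, under $\rho$) as a section-type object, and $f_X$ is, up to $\rho$, the induced function of $f_{\mathbb M}$; the same inducing/un-inducing argument gives $M(f_X)$ finite index in $M(f_{\mathbb M})$. Composing the two finite-index inclusions $M(f_\Delta)\subseteq M(f_X)\subseteq M(f_{\mathbb M})$ yields the lemma, and chasing the representatives through the inclusions yields $r(f_{\mathbb M}) = \iota'(r(f_\Delta))$.

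The main obstacle, and the step I would spend the most care on, is the "un-inducing" of a cohomology relation: given $f_X = v_X + h_X - h_X\circ T$ on the base with $v_X$ minimal, I need to produce a function $h_\Delta$ on all of $\Delta$ (not just $X$) and a $v_\Delta$ with small range such that $f_\Delta = v_\Delta + h_\Delta - h_\Delta\circ F$. The natural construction is to define $h_\Delta$ on $\Delta_{i,j}$ by $h_\Delta(F^j x) = h_X(x) + \sum_{k=0}^{j-1} f_\Delta(F^k x)$ for $x\in\Delta_{i,0}$, which forces $f_\Delta = h_\Delta\circ F - h_\Delta$ off the top level and, on the top level, reproduces the relation $f_X = \cdots$ modulo $S(v_X)$; the subtlety is that this only works modulo the subgroup generated by $S(v_X)$ and the values of $f_\Delta$ summed over partial returns, and one must check this correction subgroup has finite index in $M(f_{\mathbb M})$ --- which is where boundedness of $f$ (it lies in $C_\varkappa(\Delta,\mathbb R^d)$, hence $\|f\|_\infty<\infty$) and, crucially, the \emph{finiteness up to null sets} of the effective range enters, together with minimality to rule out spurious extra symmetry. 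I would handle the measurability/$\tilde\nu$-a.e. issues by working with the exactness in (A4) and the metric \eqref{DefSymbMetr}, but I expect the bulk of the write-up to be this finite-index estimate rather than any hard analysis.
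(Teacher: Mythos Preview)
Your easy inclusion $M(f_\Delta)\subseteq M(f_{\mathbb M})$ via lifting coboundaries through $\rho$ is correct and matches the paper. The gap is in the finite-index direction.

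Your un-inducing construction does not yield what you claim. With $h_\Delta(F^j x)=h_X(x)+\sum_{k<j}f_\Delta(F^k x)$ you get $v_\Delta\equiv 0$ off the top level and $v_\Delta=v_X$ on the top, so the minimal translated group containing the range of $v_\Delta$ must contain both $0$ and $M(f_X)+r(f_X)$; this forces $M(f_\Delta)\subseteq\overline{\langle M(f_X),\,r(f_X)\rangle}$, the \emph{linearized} group of $f_X$, not $M(f_X)$ itself. There is no reason for $M(f_X)$ to have finite index in its linearized group (take $M=\mathbb Z$, $r=\sqrt 2$: the closure is $\mathbb R$). So the chain $M(f_\Delta)\subseteq M(f_X)\subseteq M(f_{\mathbb M})$ with finite indices does not follow from this construction, and your appeal to ``boundedness of $f$'' and ``finiteness up to null sets of the effective range'' does not help: $f$ bounded says nothing about discreteness of its essential range.

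The second leg, $M(f_X)$ versus $M(f_{\mathbb M})$, is not ``symmetric'' to the tower step either: $\mathbb M$ is not a Young tower over $\rho(X)$, so there is no parallel un-inducing available on the manifold side. The paper avoids both problems by a completely different route: it reduces the $d$-dimensional statement to the one-dimensional case via coordinate projections (after a linear change of variables putting $M(f_\Delta)=\mathbb R^{\tilde d_1}\times\mathbb Z^{\tilde d_2}$, use $\mathrm{Cl}(\pi_V M(u))=M(\pi_V u)$), and then invokes Gou\"ezel's one-dimensional Theorem~1.4, which is a genuine spectral result --- it uses the eigenfunction characterization of periods for the twisted transfer operator, not cohomological bookkeeping. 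That analytic input is what actually produces the finite-index conclusion, and your proposal has no substitute for it.
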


\begin{proof}
The one dimensional case is proved in  \cite[Theorem 1.4]{G05}. 
 
We can assume $r(f_{\Delta})=0$ by possibly adding a constant to both $f_{\mathbb M}$ and $f_{\Delta}$. 
Clearly $M(f_{\Delta})$ is a subgroup of $M( f_{\mathbb M})$, as we can lift up any cohomological
equation from $\mathbb M$ to $\Delta$.
$M(f_{\Delta})$ is isomorphic to $\mathbb R^{\tilde d_1} \times \tilde{ \mathcal L}$, where $\tilde{ \mathcal L}$ is a $\tilde d_2$ dimensional lattice and 
$\tilde d_3 := d - \tilde d_1 - \tilde d_2 \geq 0$. 
Similarly, $M(f_{\mathbb M})$ is isomorphic to $\mathbb R^{d_1} \times \mathcal L$, where $\mathcal L$ is a $d_2$ dimensional lattice and 
$d_3 = d - d_1 - d_2 \geq 0$. Furthermore, by the subgroup property, $\tilde d_1 \leq d_1$ and $d_3 \leq \tilde d_3$.
It remains to show that $\tilde d_1 = d_1$ and $\tilde d_3 = d_3$. 
Replacing $f_{\mathbb M}$ and $f_{\Delta}$ by
$A f_{\mathbb M}$ and $A f_{\Delta}$
with some invertible matrix $A$, we can assume that $M(f_{\Delta}) = \mathbb R^{\tilde d_1} \times \mathbb Z^{\tilde d_2}$. 
Next we observe that for an $\mathbb R^d$ - valued function $u$, 
\begin{equation}
\label{eq:proj}
\text{Cl}(\pi_{V}) M(u)=M(\pi_V u), 
\end{equation}
where Cl means closure and
$\pi_{V}$ is the orthogonal projection to the (coordinate) subspace $V$. Similarly, let $\pi_k$ be the projection to the $k$th coordinate
subspace.

If $\tilde d_3 > d_3$, applying \eqref{eq:proj} to the $(\tilde d_1 + \tilde d_2 +1)$st coordinate direction, we obtain 
$M(\pi_{\tilde d_1 + \tilde d_2 +1}f_{\Delta}) = \mathbb Z$ while $M( \pi_{\tilde d_1 + \tilde d_2 +1} f_{\mathbb M}) = 0$, which is a contradiction
with the one dimensional case. Thus $\tilde d_3 = d_3$. 
Similarly, if $d_1 > \tilde d_1$, then there exists some $d'$ with $\tilde d_1 < d' \leq \tilde d_1 + \tilde d_2$ such that
$\mathbb R = \text{Cl}(\pi_{d'} M(f_{\mathbb M}))=  M(\pi_{d'} f_{\mathbb M})$. On the other hand, $ M(\pi_{d'} f_{\Delta}) = \mathbb Z$, which is again a contradiction with
the one dimensional case.
\end{proof}

\begin{lemma}
\label{lem:suspgroup}
The linearized groups of $f: \Delta \rightarrow \mathbb R^d$ and $f_X: X \rightarrow \mathbb R^d$ 
are the same, i.e. $\hat{\mathcal V}_{f} = \hat{\mathcal  V}_{f_X}$.
\end{lemma}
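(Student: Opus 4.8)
The plan is to compare the two linearized groups directly via their defining data $(M,r)$. Recall that the linearized group $\hat{\mathcal V}_u$ of a function $u$ is by definition the closure of the group generated by $M(u)$ and $r(u)$. So it suffices to show that $M(f)$ and $M(f_X)$ generate the same closed group together with $r(f)$ and $r(f_X)$ respectively. I would first observe that $r(f)$ and $r(f_X)$ are compatible in the natural sense: if $f = g - h + h\circ F$ with $g$ minimal and $\mathrm{range}(g) \subset M(f) + r(f)$, then summing over a return block gives $f_X = g_X - (h|_X) + (h|_X)\circ T$ (more precisely $f_X(x) = \sum_{i=0}^{r(x)-1} g(F^i x) - h(x) + h(T x)$ since $h\circ F$ telescopes inside the block), so $f_X$ is cohomologous to $g_X$, and $\mathrm{range}(g_X) \subset M(f) + r(x)\cdot r(f)$ sits inside the group generated by $M(f)$ and $r(f)$. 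Hence $\hat{\mathcal V}_{f_X} \subset \hat{\mathcal V}_f$, and also $M(f_X)$ and $r(f_X)$ both lie in $\hat{\mathcal V}_f$.

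For the reverse inclusion, the key is that $r_k = r(\Delta_{0,k})$ takes infinitely many values in general, or at least that one can realize suitable arithmetic progressions of return times; but cleaner is to use the fact that the base map $T$ has a fixed point structure or, more robustly, that $f$ and $f_X$ determine each other's cohomology classes. Concretely, I would argue: $M(f)$ is a subgroup of $M(f_X)$ reinterpreted appropriately — no, rather, I want to show $\hat{\mathcal V}_f \subset \hat{\mathcal V}_{f_X}$. Since $\hat{\mathcal V}_f$ is generated by $M(f)$ and $r(f)$, and since by a Gou\"ezel-type argument (analogous to Lemma \ref{lem:Gouezel1.4}, with the roles adapted) $M(f_X)$ is a finite-index subgroup of the group generated by $M(f)$ and the values $\{r_k r(f) \bmod M(f)\}$, I need that these values together with $M(f)$ generate a closed group containing $r(f)$. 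This holds because $\gcd$-type / density considerations on the return times $r_k$ (combined with the tail assumption \eqref{TailPoly} which in particular forces $r_k$ to be unbounded, hence take at least two coprime-difference values, or one uses that $1$ is always a possible increment via the $\Delta_{0,k}$ with $r_k = 1$ if present, otherwise a Bezout argument) show that the subgroup of $\mathbb{R}^d/M(f)$ generated by $\{n\, r(f): n \in \mathbb{Z}\}$ already equals the subgroup generated by $\{r_k\, r(f)\}$. Thus $r(f) \in \overline{\langle M(f_X), r(f_X)\rangle} = \hat{\mathcal V}_{f_X}$, and $M(f) \subset \hat{\mathcal V}_{f_X}$ similarly, giving $\hat{\mathcal V}_f \subset \hat{\mathcal V}_{f_X}$.

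The main obstacle I anticipate is the arithmetic step: controlling the subgroup generated by $\{r_k r(f) \bmod M(f)\}$ versus $\{n\, r(f) \bmod M(f)\}$ when $M(f)$ is a proper sublattice and the $r_k$ need not include $1$ or a full set of residues. One must rule out that lifting to the first-return map $T$ "loses" some arithmetic direction — equivalently that $f_X$ is not "more arithmetic" than $f$. The honest way to handle this is to work with cohomology directly: given $h_X$ with $f_X = g_X - h_X + h_X \circ T$ and $g_X$ minimal, extend $h_X$ to a function $\tilde h$ on all of $\Delta$ by flowing it along the tower (i.e. $\tilde h(F^j x) = h_X(x) + \sum_{i=0}^{j-1} f(F^i x)$ for $x \in X = \Delta_0$, $0 \le j < r(x)$), check this is well-defined and that $f = \tilde g - \tilde h + \tilde h \circ F$ where $\tilde g = \tilde h - \tilde h\circ F + f$ pushes down to a function cohomologous to $f$ on $\Delta$ with range controlled by the group generated by $M(f_X)$ and $r(f_X)$. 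This shows $M(f) \subseteq \langle M(f_X), r(f_X)\rangle$-closure and $r(f)$ lies there too, giving $\hat{\mathcal V}_f \subseteq \hat{\mathcal V}_{f_X}$ and completing the proof together with the first paragraph. I expect the bookkeeping of this "spreading out" of the coboundary along tower levels — and verifying $\tilde g$ genuinely descends to $\Delta/{\sim}$ with the claimed range — to be where the real work lies.
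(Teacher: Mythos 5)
Your ``honest way'' is the paper's proof. The easy direction ($\hat{\mathcal V}_{f_X}\subseteq\hat{\mathcal V}_f$) via telescoping the coboundary along each tower block is identical. For the hard direction, the paper takes $\underline g\colon X\to\hat{\mathcal V}_{f_X}$ with $f_X=\underline g+h-h\circ T$ and defines $\tilde f(x,l)=\underline g(x)\,1_{\{l=0\}}+h_l(x)-h_{l+1}(x)$, observing that $(\tilde f)_X=f_X$ (so $\tilde f\sim f$ over $F$) and that $\tilde f$ is cohomologous over $F$ to $\underline g\,1_{\{l=0\}}$, whose range lies in $\hat{\mathcal V}_{f_X}$. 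You build the same object by first extending $h_X$ along the fiber to $\tilde h$ and setting $\tilde g=f+\tilde h-\tilde h\circ F$; a direct computation gives $\tilde g(x,l)=g_X(x)\,1_{\{l=r(x)-1\}}$, which is the paper's function with the mass at the top level instead of the bottom. The bookkeeping you flag (well-definedness on $\Delta/\!\sim$, the claimed range) goes through exactly as you expect, and this is where the paper's content lies.

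Your earlier detour, however, contains a genuinely false step that you should not leave in even as a sketch: the claim that the tail assumption forces the $r_k$ to have two coprime-difference values (or that a Bezout/density argument recovers $r(f)$ from $\{r_k\,r(f)\bmod M(f)\}$) is wrong. Nothing prevents all $r_k$ from being, say, even, in which case the group generated by $\{r_k\,r(f)\}$ can be a proper subgroup of the one generated by $r(f)$ modulo $M(f)$, and that arithmetic route collapses. Lemma \ref{lem:Gouezel1.4} is also aimed at the manifold-to-tower comparison, not tower-to-base, so borrowing it here is not a fit. You were right to call this an obstacle and to switch to the cohomological construction, which sidesteps the return-time arithmetic entirely; I would simply delete the arithmetic paragraph from a final write-up.
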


\begin{proof}
Assume that $f = g + h - h \circ F$, where $g: \Delta \rightarrow \hat{\mathcal V}_{f}$. Then 
$f_X = g_X + h - h\circ T$, where $g_X: X \rightarrow \hat{\mathcal V}_{f}$. 
Thus $\hat{\mathcal V}_{f_X} \subseteq \hat{\mathcal V}_{f} $.
Next, assume that for some $f: \Delta \rightarrow \mathbb R^d$, we have 
$f_X = \underline g + h - h\circ T$, with $\underline g: X \rightarrow \hat{\mathcal V}_{f_X}$.
Let us define $\tilde f: \Delta \rightarrow \mathbb R^d$ by
$\tilde f (x,l) = \underline g(x) 1_{\{l = 0\}} + h_l(x) - h_{l+1}(x)$, where 
$h_0(x) = h(x)$, $h_{r(x)}(x) = h(Tx)$ and $h_l(x) = 0$ if $l \notin \{ 0, r(x)\}$. 
Thus $\hat{\mathcal V}_{\tilde f} \subseteq \hat{\mathcal V}_{f_X} $
By construction, $(\tilde f)_X = f_X$ and thus $f$ and $\tilde f$ are cohomologous, 
$\hat{\mathcal V}_{\tilde f} = \hat{\mathcal V}_{f} $.
\end{proof}

\begin{proof}[Proof of Proposition \ref{proppoly2}]
Observe that $\hat{\mathcal V}_{(\check{\bm \chi}, \upsilon)}$ is a finite index subgroup of
$\hat{\mathcal V}_{(\tilde{\check{\bm \varphi}}, \tilde \tau)}$ by Lemma \ref{lem:Gouezel1.4}.
Thus, by Lemma \ref{lem:suspgroup}, $\hat{\mathcal V}_{(\check{\bm \chi}, \upsilon)}$
is a finite index subgroup of $\hat{\mathcal V}_{(\check{\bm \varphi}, \tau)} $. In particular, 
$\hat{\mathcal V}_{(\check{\bm \chi}, \upsilon)}$ and 
$\hat{\mathcal V}_{(\check{\bm \varphi}, \tau)} $ fall into the same cases {\bf (A)}--{\bf (E)}.
\end{proof}

\subsection{Proof of Theorem \ref{propYoungpoly}}

  The main ingredients of the proof are

  (i) MLCLT with an error estimate and
  
   (ii) moderate and large deviation estimates. 

We start with establishing MLCLT with rates in parts (A) and (B) of Lemma~\ref{lemtowerLLT}.
Lemma \ref{lemtowerLLT}(C) provides a useful generalization of Lemma \ref{lemtowerLLT}(B).  
The required moderate and large deviation bounds are contained
in Lemmas~\ref{lemmaMel}--\ref{lemmaLSLD}.

\begin{lemma}
\label{lemtowerLLT}
Consider the setup of Theorem \ref{propYoungpoly} with either $d=1$ and $f_X = \tau$ or
$d=2$ and $f_X = (\check{\bm \varphi}, \tau) $ with $\nu(\check{\bm \varphi}) = 0$. Then\\

\noindent
(A) $ (T,f_X)$ satisfies the MLCLT.
$$(B) \quad \nu (x: S_{f_X}(n,x) - n \nu(f_X) \in B(v, R)) \leq C \left( n^{-\frac{d}{2}} \fg_{\sigma}(v/\sqrt n) 
+ n^{-\frac{d+\beta}{2} + 1} + n^{-\frac{d+1}{2}}\right).$$
$$(C)\quad \nu (x: S_{f_X}(n,x) - n \nu(f_X) \in B(v, R), T^n x \in \Delta_{0,l}) 
\quad\quad\quad\quad\quad\quad\quad\quad\quad\quad
$$
$$\leq C \nu(\Delta_{0,l}) \left( n^{-\frac{d}{2}} \fg_{\sigma}(v/\sqrt n)
+ n^{-\frac{d+\beta}{2} + 1} + n^{-\frac{d+1}{2}}\right) .$$
\end{lemma}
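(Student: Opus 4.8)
The plan is to prove the three parts via a Nagaev--Guivarc'h (Fourier--perturbative) argument on the tower, following the general strategy of \cite{DN16} and \cite{G05}. First I would introduce the transfer operator $\mathcal L$ of $(\Delta,\tilde\nu,F)$ acting on $C_\varkappa(\Delta,\mathbb C)$ and, for $\xi\in\mathbb R^d$, the twisted operator $\mathcal L_\xi h = \mathcal L(e^{i\langle\xi,f\rangle}h)$. Since the first return map $T$ and the induced observable $f_X$ are what appear in the MLCLT, I would pass to the induced (renewal) picture: the twisted transfer operator of $T$ is $R(\xi) = \sum_{k\geq 1}\mathcal L_{\xi,k}$, where $\mathcal L_{\xi,k}$ is the piece of $\mathcal L_\xi^k$ coming from points with return time exactly $k$; equivalently $R(\xi)$ is the transfer operator of $T$ twisted by $e^{i\langle\xi,f_X\rangle}$. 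Because $\beta>2$ in \eqref{TailPoly}, the return time has finite second moment, so $\xi\mapsto R(\xi)$ is $C^2$ near $\xi=0$ on $C_\varkappa(X,\mathbb C)$, $R(0)$ is the (mixing) transfer operator of $T$ with simple leading eigenvalue $1$, and standard perturbation theory gives, for small $\xi$, a leading eigenvalue $\lambda(\xi) = 1 - \tfrac12\langle\Sigma\xi,\xi\rangle + o(|\xi|^2)$ with the remaining spectrum inside a disk of radius $<1$; here $\Sigma = \Sigma(f_X)$ is the asymptotic covariance, which is positive definite precisely because $f_X$ is non-degenerate (no cohomological degeneracy). The aperiodicity input — that $R(\xi)$ has spectral radius $<1$ for $\xi\notin M(f_X)^\perp$ modulo the dual lattice, i.e. that there is no eigenvalue of modulus $1$ for such $\xi$ — is exactly what identifies the group $M=M(f_X)$ and translate $r=r(f_X)$ of Definition \ref{defgroup} as the symmetry data in the MLCLT; this is the step I expect to require the most care, since one must rule out eigenvalues on the unit circle on the \emph{whole} torus of frequencies, not just near $0$, and show that the bad set is exactly the annihilator of $M(f_X)$.

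For part (A), I would write the quantity in \eqref{MLCLTtestf} as a Fourier integral: for test functions $\mathfrak x,\mathfrak y$ and $\mathfrak z$,
$$
\int \mathfrak x(x)\,\mathfrak y(T^n x)\,\mathfrak z(S_{f_X}(n,x)-n\nu(f_X)-w_n)\,d\nu(x)
= \frac{1}{(2\pi)^d}\int \hat{\mathfrak z}(\xi)\, e^{-i\langle\xi,n\nu(f_X)+w_n\rangle}\,\nu\!\left(\mathfrak y\circ T^n\cdot \mathcal R(\xi)^n(\mathfrak x)\right) d\xi,
$$
where $\mathcal R(\xi)$ is the $T$-twisted operator above (and on the discrete directions the integral becomes a sum over the dual lattice). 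Splitting the $\xi$-integral into a shrinking neighbourhood of $M(f_X)^\perp$ (where the eigenvalue expansion gives, after the change of variables $\xi\mapsto\xi/\sqrt n$, the Gaussian $\fg_\Sigma(w)$ together with the factor $\int\mathfrak x\,d\nu\int\mathfrak y\,d\nu$ from the projection onto the leading eigenprojector, and the oscillatory factor $e^{-i\langle\xi,w_n\rangle}$ producing exactly the constraint $w_n\in M+nr$) and its complement (which decays geometrically by aperiodicity), one obtains the stated limit with the claimed uniformity in $w$ on compacta and in $\| w_n - w\sqrt n\|\le K$. This is the standard local-CLT-with-test-functions computation; the coboundary decomposition \eqref{deccoh} is trivial here ($h=0$), so $f_X$ is minimal and $M,r$ come from Definition \ref{defgroup}.

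For parts (B) and (C) I would instead use the moderate/large deviation machinery — Lemmas \ref{lemmaMel}--\ref{lemmaLSLD} as cited — rather than the pure Fourier bound, since the error terms $n^{-(d+\beta)/2+1}$ and $n^{-(d+1)/2}$ reflect, respectively, the polynomial return-time tail \eqref{TailPoly} and the first-order Edgeworth correction. Concretely, one bounds $\nu(x: S_{f_X}(n,x)-n\nu(f_X)\in B(v,R))$ by splitting according to whether $\max_{k<n} r(T^k x)$ is large (a ``big jump'' in the renewal process, controlled by the tail estimate, contributing the $n^{-(d+\beta)/2+1}$ term) or all return times are moderate, in which case a truncated observable satisfies a uniform local CLT with error $n^{-(d+1)/2}$ and main term $n^{-d/2}\fg_\sigma(v/\sqrt n)$. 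Part (C) is the same estimate carried out with the extra observable $1_{\{T^n x\in\Delta_{0,l}\}}$; since the tower structure makes $T^n x\in\Delta_{0,l}$ essentially independent of $S_{f_X}(n,\cdot)$ up to the spectral gap, one gains the factor $\nu(\Delta_{0,l})$ — this is the analogue of the $\int\mathfrak y\,d\nu$ factor in (A), now localized. Finally, (H2) and (H7) follow from (B) (resp.\ (C)) by summing the geometric-type bound over $n$ with $|n-w|>K\sqrt w$: the Gaussian piece sums to something $o_{K\to\infty}(1)$ after multiplication by $w^{1/2}$ (resp.\ $w^{d/2}$), and the two polynomial error terms sum to $O(w^{-(\beta-2)/2})$ and $O(w^{-1/2})$ respectively, which vanish as $w\to\infty$ because $\beta>2$; and (H1), (H6) are just (A) with $d=1$ (for $\tau$) and $d=2$ (for $(\check{\bm\varphi},\tau)$). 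The main obstacle, as noted, is the global aperiodicity statement identifying the exact frequency set where the twisted operator loses its spectral gap; everything else is a routine, if lengthy, assembly of the perturbative estimates.
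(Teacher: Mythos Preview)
Your plan for part (A) is essentially the paper's approach --- twisted transfer operators, perturbation of the leading eigenvalue, aperiodicity away from the annihilator of $M(f_X)$ --- and would go through. One point you gloss over: you assert ``the coboundary decomposition \eqref{deccoh} is trivial here ($h=0$), so $f_X$ is minimal.'' This is not automatic. Nothing in the hypotheses forces $f_X$ to be minimal, and the MLCLT of Definition~\ref{def:MLCLT} requires $h$ to be bounded and $\nu$-a.e.\ continuous, not merely measurable. The paper spends a full paragraph on the non-minimal case, showing (via \cite[Prop.~3.7, 3.9]{AD01}) that when $M(f_X)\subsetneq\mathbb R^d$ one can choose a \emph{Lipschitz} $h$ such that $f_X-h+h\circ T$ takes values in a translate of $M(f_X)$; only then can one apply the spectral argument to the reduced function. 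You should not skip this.

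For parts (B) and (C), however, there is a genuine circularity in your plan. You propose to ``use the moderate/large deviation machinery --- Lemmas~\ref{lemmaMel}--\ref{lemmaLSLD}'' to obtain the local bound. But Lemma~\ref{lemmaLLD} (and, through it, Lemma~\ref{lemmaLSLD}) explicitly \emph{invokes} Lemma~\ref{lemtowerLLT}(B) in its proof: the step ``for any $\mathcal C_1$ fixed, $\sum_{\mathcal C_2\text{ compatible}}\nu(\mathcal C_2)\le C n^{-1/2}$'' is precisely an application of (B). So you cannot use those lemmas to establish (B).

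The paper's route for (B) and (C) is a direct Fourier bound, not a probabilistic big-jump decomposition. One takes a nonnegative test function $h_d$ whose Fourier transform has \emph{compact support} (the P\'olya density $h_1(z)=(1-\cos\eps z)/(\pi\eps^2 z^2)$ and its tensor square), writes
\[
\int h_d\bigl(S_{f_X}(n,x)-n\nu(f_X)-v\bigr)\,d\nu(x)
=\frac{1}{(2\pi)^d}\int_{[-\eps,\eps]^d}\hat h_d(t)\,\nu(P_t^n 1)\,e^{-it(n\nu(f_X)+v)}\,dt,
\]
and then estimates $\nu(P_t^n 1)$ using the eigenvalue expansion $\lambda_t=1+i\nu(f_X)t-t^T\bm m\,t+O(|t|^\beta+|t|^3)$. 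The three terms on the right of (B) are exactly the contributions of the Gaussian main term, the $O(|t|^\beta)$ remainder, and the $O(|t|^3)$ remainder after the rescaling $t\mapsto t/\sqrt n$ --- not a tail/truncation splitting. Part (C) is identical with $\nu(P_t^n 1)$ replaced by $\nu(1_{\Delta_{0,l}}P_t^n 1)$, which immediately produces the extra $\nu(\Delta_{0,l})$. The compact Fourier support is what makes the integral finite without any aperiodicity input away from $0$; this is the key trick you are missing.
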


\begin{proof}
Lemma \ref{lemtowerLLT}
can be prove
by the Fourier method, 
cf. similar results in \cite{AD01,G05, GH88, R83, SzV04}.
We briefly sketch the proof here only highlighting the differences from the analogous arguments in the above list. 

Let us assume first that $f_X$ is minimal in the sense of Definition \ref{defgroup}.
The proof is based on $P$, the Perron-Frobenius operator associated to $T$ acting on 
$C_{\varkappa}(X, \mathbb C)$ by the formula
$$ \nu(f (g\circ T))=\nu(g Pf). $$
and the twisted operators $P_{t}u = P(e^{i \langle t, f_X\rangle} u)$ for 
$t \in \mathbb R^d$. These operators satisfy the Lasotha-Yorke (a.k.a. Doeblin-Fortet) inequality:
\begin{equation}
 \label{LY}
\| P_t^n u \|_{\varkappa'} \leq C(1+|t|) \| u\|_{\varkappa'} + C \| u \|_{L_1}
\end{equation}
(with some $\varkappa ' \in (\varkappa, 1)$).
In the one dimensional case, \eqref{LY} is included in Lemma 4.1(2) of \cite{G05} and its proof is sketched based
on Proposition 2.1 of \cite{AD01}. Note that Proposition 2.1 of \cite{AD01} is valid in higher dimensions, so the 
adjustments
described in \cite{G05} give \eqref{LY}. By classical results of \cite{I-TM50}, $P$ is quasicompact with a 
simple eigenvalue at $1$ (eigenfunction identically $1$ as we took the Jacobian w.r.t. $\nu$ in the definition)
and finitely many eigenvalues with modulus in $(\rho, 1]$ for some $\rho <1$. By perturbation theory, similar picture holds for 
$P_t$ for $t$ in a small neighborhood of $0 \in \mathbb R^d$. We need to understand the asymptotics of 
$\lambda_t$, the eigenvalue of $P_t$ close to $1$, and the other eigenvalues on the unit circle.

If $d=2$, let us fix a vector $s = (s_1, s_2) \in \mathbb R^2$ of unit length, and for a moment, 
only consider $t \in \mathbb R^2$ in the direction of $s$, i.e.  $t = |t| s$. Then 
\cite[Prop. 4.5]{G05} applied to the function $\langle s, f\rangle$ tells us that
$$\lambda_{|t| s} = \nu(e^{i |t| \langle s, f\rangle}) + (c_1 s^2_1 + c_2 s^2_2) t^2 +O\left(|t|^3\right).$$
On the other hand the assumption \eqref{TailPoly} implies that 
$\nu(f_x>R)=O\left(R^{-\beta}\right)$,
and hence
$$\nu(e^{i |t| \langle s, f\rangle}) = 
1 + i \nu(\tau) s_2 |t| - (\tc_1 s^2_1 + \tc_2 s^2_2) t^2 +O(t^{\beta} + t^3)$$
(see e.g.  \cite{W73}). Combining the last two displays we obtain
$$\lambda_{|t| s} = 1 + i \nu(\tau) s_2 |t|- (\hc_1 s^2_1 + \hc_2 s^2_2) t^2 +O(|t|^{\beta} + |t|^3),$$
with some $\hc_1, \hc_2$, which are independent from $s$.
By the proof of the cited proposition, the implied constant in $O(\cdot)$ only depends on 
$ \| \langle s, f\rangle \|_{C_{\varkappa}(X, \mathbb R)},$ hence it is uniform in $s$. We conclude that 
there is a matrix $\bm m$
\begin{equation}
\label{T3Eigen}
\lambda_{t} = 1 + i (0, \nu(\tau)) t- t^T \bm m t +O(|t|^{\beta} + |t|^3).
\end{equation}
In fact, the coefficients of $\bm m$ are given by Green-Kubo formula  
(see the expression for $a$ in \cite[Prop 4.5]{G05}) but we will not need this fact.

If $d=1$, then 
Proposition 4.5 of \cite{G05} directly implies that there is a constant $m>0$ such 
$\lambda_{t} = 1 + i  \nu(\tau) t- t^2 m  +O(|t|^{\beta} + |t|^3)$.

The characterization of other eigenvalues of $P_t$ on $\mathcal S^1$ is 
  again analogous
to similar computations in \cite{AD01, G05}:  $\lambda \in \mathcal S^1$ is an eigenvalue (and 
in this case, $g=g_t$ is an eigenfunction with $|g_t| = 1$) if and only if
$e^{i \langle t, f_X \rangle} g_t = \lambda g_t \circ T$. Then we can finish the proof for $f_X$ minimal
as in the above references.

Now assume $f_X$ is not minimal, i.e., 
$f_X = \varsigma  + h - h \circ T$ for some measurable $\varsigma, h$ and $M=M(f_X) = S(\varsigma) \subsetneq S(f_X)$. 
A priori we only know that $h$ and $\varsigma$ are measurable. In order to prove the MLCLT, we need to show that 
we can choose $h$ and $\varsigma$ so that $h$ is bounded and almost everywhere continuous. In fact, we will show that
it is Lipschitz and consequently $\varsigma$ is Lipschitz as well. Then we can repeat the previous argument
with $f_X$ replaced by $\varsigma$ to conclude the MLCLT.
 
Let $G$ be defined by $M = \widehat{\mathbb R^2 / G}$, where $\widehat{L}$ is the
group of characters of $L$.
By \cite{AD01}, Proposition 3.7, for any $t \in G$, there is some $\lambda \in \mathcal S^1$ and
a Lipschitz function $g_t: X \rightarrow \mathcal S^1$
so that $e^{i \langle t, f_X(x) \rangle} = \lambda g_t(x) / g_t(T(x))$. Next, we show that there is a function
$h_t: X \rightarrow \mathbb R$ which is Lipschitz and satisfies $e^{ih_t} = g_t$. 
Since $g_t$ is Lipschitz, there is some $K$ so that the oscillation of $g_t$ on $K$-cylinders is less than $\sqrt 2$
(we call a cylinder of length $K$ a set of the type
$\Delta^0_{i_1,...,i_K} = \cap_{j=1}^K T^{-j +1} \Delta_{0, i_j}$). Fix a cylinder $\Delta^0_{i_1,...,i_K}$ and pick an arbitrary
element $x_{i_1,...,i_K} \in \Delta^0_{i_1,...,i_K}$. Let $h_t(x_{i_1,...,i_K})$ be the unique real number 
in $[0, 2 \pi)$ that satisfies $e^{ih_t(x_{i_1,...,i_K})} = g_t(x_{i_1,...,i_K})$. By the choice of $K$, for any 
  $y \in \Delta^0_{i_1,...,i_K}$ there is a unique 
  $h_t(y) \in (h_t(x_{i_1,...,i_K}) - \pi, h_t(x_{i_1,...,i_K}) + \pi)$
satisfying $e^{ih_t(y)} = g_t(y)$. By construction, $h_t$ is Lipschitz.
Observe that
$$\langle t, f_X(x) \rangle = \rho +h_t(x) - h_t(T(x)) + \varsigma_t(x),$$ where $\lambda = e^{i \rho}$
and $\varsigma_t: X \rightarrow 2 \pi \mathbb Z$ is some function.
Since $f_X$ is non-degenerate, Proposition 3.9 of \cite{AD01} implies that $G$ is a discrete group.
Now we define $h: X \rightarrow$ span$(G)$ via $\langle t, h\rangle = h_t$ for all $t$ in a fixed generator of $G$.
By construction, $h$ is Lipschitz. Lemma \ref{lemtowerLLT} (A) follows.

Next, we prove part (B).
Consider the non-negative function
$$ h_1(z) = \frac{1-\cos(\varepsilon z)}{\pi\varepsilon^2 z^2} $$
(with some small $\varepsilon >0$). Its Fourier transform equals
$$
\hat h_1 (t) = \int e^{itz} h_1(z) dz = 
1_{|t|< \varepsilon} \left( \frac{1}{\varepsilon} - \frac{|t|}{\varepsilon^2} \right).
$$
Next, consider $h_2(z_1,z_2) = h_1(z_1) h_1(z_2)$ and its Fourier transform
$\hat h_2(t_1, t_2) = \hat h_1(t_1) \hat h_1(t_2)$.
We will prove that
\begin{eqnarray}
&&\int h_d (S_{f_X}(n,x)-n \nu(f_X) -v ) \nu(dx) \nonumber \\
&&\leq C \left( n^{-\frac{d}{2}} \fg_{\sigma}(v/\sqrt n) 
+  n^{-\frac{d+\beta}{2} + 1} + n^{-\frac{d+1}{2}} \right). \label{eq:Polya}
\end{eqnarray}
This will imply Lemma \ref{lemtowerLLT} (B) 
(with some different $C$) as $h_d(z) \geq c >0$ for $\|z \| < \varepsilon/2$ and we can cover a ball of radius $R$
with balls of radius $\varepsilon /2$.

The proof of \eqref{eq:Polya} is standard. Namely, we rewrite the LHS as
\begin{equation}
\label{eq:PolyaFT}
\left(\frac{1}{2\pi}\right)^d \int_{[-\eps, \eps]^d} \hat h_d(t) 
\nu\left(P_t^n 1\right) e^{-i t(n\nu(f_X)+v)} dt 
\end{equation}
and estimate $|\nu(P_t^n 1)|$ taking the second order Taylor expansion of $P_t$ at zero.
The main contribution comes from the leading eigenvalue which is controlled by 
\eqref{T3Eigen}. A similar computation can be found in Sections A.2-A.4 of \cite{P09}.
The setting of \cite{P09} is different since only lattice distributions are considered where
but the lattice assumption is only used to ensure that the integration in \eqref{eq:PolyaFT} is over a compact
set. In our case the compactness comes from the fact that $\hat h_d$ has compact support.
Thus the proof of \eqref{eq:Polya} is similar to \cite{P09}, so we leave it to the reader.

The proof of part (C) is the same as the proof of
part (B) except that  in \eqref{eq:PolyaFT} 
$\nu(P_t^n 1)$ 
has to be replaced by $\nu(1_{\Delta_{0,l}} P_t^n 1)$
providing an additional improvement by the factor $\nu(\Delta_{0,l}).$ 
We refer the reader to the penultimate 
formula on page 834 in \cite{P09} for a similar argument.
\end{proof}

Next, we proceed to the desired moderate and large deviation estimates. 
We first prove global bounds and then
 derive local bounds from the global ones.

\begin{lemma}
\label{lemmaMel}
 ({\em Global moderate deviations})
Consider the setup of Theorem \ref{propYoungpoly} with $f_X = \check{\bm \varphi}$
or $f_X = \tau$. 
For any $\varepsilon >0$ there
is some constant $C= C_{\varepsilon}$ such that
for any $\xi \in (1/2,1]$
\begin{equation*}
\nu(x \in X: | S_{f_X}(n,x) - n \nu(f_X) | > n^{\xi} ) \leq 
C_{\xi, \varepsilon} n^{-(\beta -1)(2 \xi -1) + \varepsilon}.
\end{equation*}
\end{lemma}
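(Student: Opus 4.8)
The plan is to reduce the claimed polynomial moderate deviation bound to a known large deviation estimate for the induced map $T$ combined with careful control of the tail of the return time function $r$. Since $\tau = \tilde\tau_X$ and $\check{\bm\varphi} = \check{\tilde{\bm\varphi}}_X$ are Birkhoff sums over the tower of bounded H\"older functions $\tilde\tau, \check{\tilde{\bm\varphi}} \in C_\varkappa(\Delta,\mathbb R)$, and since $\tilde\tau$ is bounded while the return time $r$ has polynomial tail \eqref{TailPoly} with exponent $\beta > 2$, the ergodic sum $S_{f_X}(n,x)$ is controlled by $S_r(n,x) = \sum_{i=0}^{n-1} r(T^i x)$, the time spent in the tower after $n$ returns. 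Heuristically $S_r(n,x) \asymp n \nu(r)$, and the observable $f_X$ over $n$ steps of $T$ corresponds to roughly $n \nu(r)$ steps of $F$ of a bounded function, so a deviation of size $n^\xi$ in $S_{f_X}(n,\cdot)$ is driven either by a deviation of $S_r(n,\cdot)$ itself or by a genuine fluctuation of the bounded-observable Birkhoff sum.

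First I would invoke the existing literature on moderate and large deviations for Young towers with polynomial tails—e.g. the results of Melbourne and Nicol, which under $\nu(r>n) = O(n^{-\beta})$ give, for a mean-zero H\"older observable and $\xi \in (1/2,1]$, a bound of the form $\nu(|S_{f_X}(n,\cdot)| > n^\xi) = O(n^{-(\beta-1)(2\xi-1)+\varepsilon})$; this is precisely the shape claimed, so the lemma is essentially a citation once the setup is matched. The key technical point to verify is that the observable $f_X$ (either $\tau$ or $\check{\bm\varphi}$, both arising as sums over the tower of bounded H\"older functions) satisfies the hypotheses of the cited moderate-deviation theorem: namely that it is H\"older with respect to the symbolic metric on $X$ (which follows from $f \in C_\varkappa(\Delta,\mathbb R^2)$ and the definition \eqref{IndSum} of $f_X$, since the H\"older constant of the induced function is controlled using the exponential contraction of the separation time), and that the tail condition on $f_X$ inherits the polynomial rate from \eqref{TailPoly}—indeed $\nu(|f_X| > R) = O(R^{-\beta})$ because $|f_X(x)| \le \|f\|_\infty \, r(x)$.

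The main obstacle I anticipate is matching conventions precisely: the exponent $\beta$ here equals $\alpha+1$ in the notation of \cite{Y99} (as the paper already notes), and the various references (\cite{AD01}, \cite{G05}, \cite{P09}, plus Melbourne–Nicol) use slightly different normalizations for the tail exponent and for what "moderate deviations" means; one must make sure the power $(\beta-1)(2\xi-1)$ is the correct one and not off by a unit in $\beta$. A secondary subtlety is the uniformity of the constant $C_{\xi,\varepsilon}$ as $\xi \to 1/2^+$, where the exponent degenerates to $0$—this is fine since the statement only claims existence of a constant for each fixed $\xi$ and $\varepsilon$, but the proof should not implicitly pass to a limit. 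Finally, one should treat $f_X = \tau$ and $f_X = \check{\bm\varphi}$ on the same footing; for $\tau$ the observable need not be mean-zero, but only $S_\tau(n,\cdot) - n\nu(\tau)$ appears, so this is handled by recentering.

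Concretely, the steps in order are: (1) record that $f \in C_\varkappa(\Delta,\mathbb R)$ bounded implies $f_X \in C_{\varkappa}(X,\mathbb R)$ with $|f_X| \le \|f\|_\infty r$, hence $\nu(|f_X - \nu(f_X)| > R) = O(R^{-\beta})$; (2) quote the polynomial moderate deviation bound for first-return maps of Young towers satisfying \eqref{TailPoly} with $\beta > 2$, applied to the centered observable $f_X - \nu(f_X)$, to get $\nu(|S_{f_X}(n,\cdot) - n\nu(f_X)| > n^\xi) \le C_{\xi,\varepsilon} n^{-(\beta-1)(2\xi-1)+\varepsilon}$; (3) note that both choices $f_X = \tau$ and $f_X = \check{\bm\varphi}$ fall under this. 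This is essentially an application, so I would keep the write-up short, with the verification of the H\"older and tail properties of $f_X$ being the only genuine content.
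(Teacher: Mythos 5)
Your heuristic is right — a deviation of $S_{f_X}(n,\cdot)$ should be driven by a deviation of the underlying tower sum over roughly $n\nu(r)$ steps — but the reduction you then perform does not close the gap, and step (1) as written is false. The paper's space $C_\varkappa(X,\mathbb R)$ requires boundedness, yet $f_X$ (either $\tau$ or $\check{\bm\varphi}$) satisfies $|f_X(x)|\le\|f\|_\infty\, r(x)$ with $r$ unbounded of polynomial tail; so $f_X\notin C_\varkappa(X,\mathbb R)$. If $f_X$ were bounded H\"older for the first-return (Gibbs–Markov) map $T$, you would in fact get \emph{exponential} moderate deviations, not the claimed polynomial rate — the polynomial exponent $(\beta-1)(2\xi-1)$ exists precisely because the observable on $X$ is unbounded with tail governed by $\beta$. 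Consequently there is no off-the-shelf moderate deviation theorem "for first-return maps of Young towers" that applies directly to this class of unbounded observables and produces the stated exponent; citing \cite{M09} at the level of $T$ and $f_X$ is not available.

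What the paper actually does, and what your plan is missing, is to work at the level of the tower map $F:\Delta\to\Delta$, where the observable $f\in C_\varkappa(\Delta,\mathbb R)$ \emph{is} bounded H\"older and Melbourne's \cite[Thm 1.3]{M09} bound \eqref{eq:Mel1} applies verbatim. One then writes $S_{f_X}(n,x)=\tilde S_f(t_n(x),x)$ with the random hitting time $t_n(x)$, and decomposes the bad event into: (i) the tower Birkhoff sum over the deterministic time $n\nu(r)$ deviates ($A_1$); (ii) the fluctuation of $\tilde S_f$ between $n\nu(r)$ and $t_n(x)$ is large, which is empty for large $n$ because $f$ is bounded ($A_2$); and (iii) $t_n(x)$ itself deviates from $n\nu(r)$, controlled by applying Melbourne's estimate to the bounded observable $\chi_{\mathrm{roof}}$ ($A_3, A_4$). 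Your write-up should replace the purported citation at the $T$-level by this tower-to-base reduction, which is the actual content of the lemma.
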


\begin{proof}
Let us write $\tilde S_f (n,x) = \sum_{i=0}^{n-1} f(F^i(x))$ for $x \in \Delta$ where
$f \in C_{\varkappa}(\Delta, \mathbb R^d)$ is related to $f_X$ via \eqref{IndSum}.
\cite[Thm 1.3]{M09} states that for $\xi \in (1/2,1]$, 
\begin{equation}
\label{eq:Mel1}
\tilde \nu(x \in \Delta : | \tilde S_{f} (n,x) - n \tilde \nu(f) | > n^{\xi} ) \leq C_{\varepsilon} (\ln n)^{\beta -1 } 
n^{-(\beta -1)(2 \xi -1) }. 
\end{equation}

We will deduce Lemma \ref{lemmaMel} from \eqref{eq:Mel1}. 
Let $\chi_{roof}: \Delta \rightarrow \{0,1\}$ be the indicator function of the top floor of $\Delta$. With the 
notation
$$
t_n : X \rightarrow \mathbb Z, \quad t_n(x) := \min \{ m: \tilde S_{\chi_{roof}} (m,x) = n\}
$$
we have 
$S_{f_X} (n,x) = \tilde S_{f}(t_n(x),x)$. Next, we fix some
 $\xi'' < \xi' < \xi$ close to $\xi$ and write $n_{\pm} = \lfloor n \tilde \nu (r) \pm n^{\xi'} \rfloor$.
We claim that for $n$ large enough,
 $$
 \{ x \in X: |S_{f_X} (n,x) - n \nu (f_X) | > n^{\xi}\} \subset \bigcup_{i=1}^4 A_i,
 $$
 where
\begin{align*}
A_1 & = \{ x \in X: |\tilde S_f (n \nu(r), x) - n \nu(f_X)| > n^{\xi}/4 \}\\
A_2 & =  \{ x \in X: \exists k \in [n_-, n_+]: | \tilde S_f (n \nu(r), x) -  \tilde S_f (k, x) | > n^{\xi}/4 \} \\
A_3 & =  \{ x \in X:  | \tilde S_{\chi_{roof}} (n _-, x) -  n_-\tilde \nu (\chi_{roof}) | > n^{\xi''} \} \\
A_4 & =  \{ x \in X:  | \tilde S_{\chi_{roof}} (n _+, x) -  n_+ \tilde \nu (\chi_{roof}) | > n^{\xi''} \}.
\end{align*}
To verify the above claim, observe that
$$\{ x: t_n(x) < n_-\} \subset A_3\quad\text{and}\quad \{ x: t_n(x) > n_+\} \subset A_4. $$
Assuming (as we can) that $\xi'' = \xi''(\xi,\varepsilon)$ is sufficiently close to $\xi$,
 \eqref{eq:Mel1} gives that $\nu( A_1)+  \nu ( A_3) + \nu ( A_4)\ll n^{-(\beta -1)(2 \xi -1) + \varepsilon}$
Since $f$ is bounded, 
$A_2 = \emptyset$ for $n$ sufficiently large. We have finished the proof of Lemma \ref{lemmaMel}.
\end{proof}

\begin{lemma}
\label{lemmaLLD}
 ({\em Local moderate deviations}) For any $\varepsilon, \varepsilon' >0$ fixed, there exists a constant $C= C_{\varepsilon,\varepsilon'}$ such that 
 
 \noindent
($d=1$) For any $L$ with $|L| \geq n ^{1/2 + \varepsilon}$
$$\nu(x \in X: S_{\tau}(n,x) - n \nu(\tau) \in [L, L+1] ) \leq 
C \frac{n^{\beta - 3/2+ \varepsilon'}}{(\min \{ L,n\})^{2(\beta -1)}}.
$$
($d=2$) For any $\vec L \in \mathbb R^2$ with $|\vec L| \geq n ^{1/2 + \varepsilon}$
$$\nu(x \in X: S_{(\check{\bm \varphi}, \tau)}(n,x) - n (0, \nu(\tau)) \in \vec L + [0,1]^2 ) \leq 
C \frac{n^{\beta - 2+ \varepsilon'}}{(\min \{ |\vec L| ,n\})^{2(\beta -1)}}.
$$
\end{lemma}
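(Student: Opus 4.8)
The plan is to deduce the local moderate deviation estimates of Lemma~\ref{lemmaLLD} from the global moderate deviation bound of Lemma~\ref{lemmaMel} together with the local limit estimate in Lemma~\ref{lemtowerLLT}(B). The idea is a standard but slightly delicate ``first return to a good scale'' argument: a point whose ergodic sum is large at time $n$ must, at some earlier time $j$, make its \emph{first} excursion above a moderate threshold, and after that time the remaining $n-j$ steps must land the partial sum in the prescribed unit box. One controls the first factor by the global bound (Lemma~\ref{lemmaMel}) and the second factor by the local bound (Lemma~\ref{lemtowerLLT}(B)), using the quasi-Markov structure of the tower (bounded distortion, (A3)) to split the measure of the intersection as an approximate product.

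More concretely, first I would reduce to the case $|L|$ (resp.\ $|\vec L|$) of order between $n^{1/2+\varepsilon}$ and $n$, since for $|L|\gg n$ the bound follows directly from Lemma~\ref{lemmaMel} ($f$ is bounded on the tower only in the $d=1$, $f_X=\tau$-is-bounded setting; in general one still has the global bound, and the right-hand side is then dominated). Next I would fix a threshold scale $n^{1/2+\varepsilon'}\le \ell \le |L|$ of order a small power of $|L|$ and decompose the event according to the first time $j\le n$ at which $|S_\tau(j,x)-j\nu(\tau)|$ exceeds, say, $|L|/2$ (one needs $j$ not too small, roughly $j\gtrsim$ (something like $|L|$ because of the linear drift), and not too large). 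For the contribution of each such $j$, one writes the event as the intersection of a ``first passage'' event depending on $x$ through time $j$, and an event for the future orbit $T^j x$ of the form $S_\tau(n-j, T^j x)-(n-j)\nu(\tau)$ lying in a bounded set translated by a value determined by the past; here one uses the $T$-invariance of $\nu$ and the bounded distortion of the tower to bound the measure of the intersection by $C$ times the product of the two marginal measures. The future event is controlled by Lemma~\ref{lemtowerLLT}(B), giving a factor $C\bigl((n-j)^{-1/2}+(n-j)^{-\beta/2+1/2}+(n-j)^{-1}\bigr)$, and summing over $j$ together with the global tail bound $\nu(|S_\tau(j,x)-j\nu(\tau)|>|L|/2)\le C j^{?}|L|^{-2(\beta-1)+\varepsilon'}\cdot(\ldots)$ from Lemma~\ref{lemmaMel} (applied at the right exponent $\xi$) produces the claimed exponents $n^{\beta-3/2+\varepsilon'}$ in the $d=1$ case and $n^{\beta-2+\varepsilon'}$ in the $d=2$ case, with denominator $(\min\{|L|,n\})^{2(\beta-1)}$.

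The main obstacle, I expect, is the bookkeeping of the exponents: one has to choose the intermediate scale $\ell$ and the range of the first-passage time $j$ so that, simultaneously, (a) the global bound at time $j$ is summable against the local bound at time $n-j$, (b) the power of $n$ coming from the number of admissible $j$'s (at most $n$) is exactly absorbed to give $n^{\beta-3/2}$ rather than something worse, and (c) the $\varepsilon$-losses from Lemma~\ref{lemmaMel} (the $(\ln n)^{\beta-1}$ factor and the freedom in $\xi''<\xi'<\xi$) are swallowed into the final $\varepsilon'$. A secondary technical point is justifying the approximate product decomposition of $\nu$ on the intersection of a ``past'' and a ``future'' event; this is where assumption (A3) (the Gibbs/bounded-distortion property of the first-return map) enters, essentially via the fact that for a union of $n$-cylinders $Q$ and any measurable future event $E$ one has $\nu(Q\cap T^{-n}E)\le C\,\nu(Q)\,\nu(E)$, and one must be slightly careful because the ``future'' event's location depends on where in $Q$ the point sits (but only through the value of $S_\tau(j,x)$, which varies over a bounded range on the relevant level sets, so a dyadic-in-$L$ subdivision handles it). Once these are arranged, the $d=2$ case is formally identical with $n^{-1/2}$ replaced by $n^{-1}$ in the local bound, accounting for the different final exponent.
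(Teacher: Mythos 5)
You have the right ingredients (Lemma \ref{lemmaMel} for the global bound, Lemma \ref{lemtowerLLT}(B) for the local bound, and bounded distortion to factorize a ``past'' event against a ``future'' event), but the decomposition you choose does not close. The paper splits the time interval once, at the fixed time $n/2$: writing the event as a union over pairs of compatible cylinders $(\mathcal C_1,\mathcal C_2)$ of lengths $n/2$, it observes that compatibility forces one of the two halves to satisfy $|S_\tau(n/2,\cdot)-n\nu(\tau)/2|\geq L/2-\|\tilde\tau\|_\kappa/(1-\kappa)$ uniformly on the corresponding cylinder (H\"older continuity of $\tilde\tau$ controls the oscillation of $S_\tau(n/2,\cdot)$ on an $(n/2)$-cylinder), bounds the total mass of such cylinders by $Cn^{\beta-1+\varepsilon'}(\min\{L,n\})^{-2(\beta-1)}$ via Lemma \ref{lemmaMel}, bounds the mass of cylinders compatible with a fixed $\mathcal C_1$ by $Cn^{-1/2}$ via Lemma \ref{lemtowerLLT}(B), and multiplies using the quasi-independence inequality $\nu(\mathcal C_1\cap T^{-n/2}\mathcal C_2)\leq C\nu(\mathcal C_1)\nu(\mathcal C_2)$. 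There is no sum over an intermediate time, so the two factors combine directly into $n^{\beta-3/2+\varepsilon'}(\min\{L,n\})^{-2(\beta-1)}$.

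Your first-passage decomposition over $j\leq n$ is where the argument breaks. To sum the contributions you need a bound on $\nu(\text{first passage at time }j)$ for each individual $j$, weighted by the local factor $(n-j)^{-1/2}$; Lemma \ref{lemmaMel} only bounds the fixed-time probability $\nu(|S_\tau(j,x)-j\nu(\tau)|>L/2)\lesssim j^{\beta-1+\varepsilon}L^{-2(\beta-1)}$, and summing that over $j$ against $(n-j)^{-1/2}$ costs an extra factor of order $n^{1/2}$ (and for $j$ near $n$ the local factor is $O(1)$, so even exploiting disjointness of the first-passage events you would need a maximal inequality and would still land at $n^{\beta-1}L^{-2(\beta-1)}$ rather than $n^{\beta-3/2}L^{-2(\beta-1)}$). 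You flag this exponent bookkeeping as ``the main obstacle'' but do not resolve it, and it is not resolvable with the stated tools. A second, smaller error: the regime $|L|\gg n$ does \emph{not} follow directly from Lemma \ref{lemmaMel}; with $\xi=1$ that lemma gives only $n^{-(\beta-1)+\varepsilon}$, which misses the target $n^{-\beta+1/2+\varepsilon'}$ by a factor $n^{1/2}$ --- the localization gain is needed in every regime, which is exactly what the fixed $n/2$-split provides.
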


\begin{proof}
We prove the case $d=1$ and omit the similar proof for case $d=2$.
Recall that a cylinder of length $k$ a set of the type
$$\Delta^0_{i_1,...,i_k} = \bigcap_{j=1}^k T^{-j +1} \Delta_{0, i_j}.
$$
By bounded distortion, there exists a constant $C$ so that for any $k,l$, for any length $k$ cylinder
$\mathcal C_1$ and length $l$ cylinder $\mathcal C_2,$ and any $m \geq k$, 
\begin{equation}
\label{quasiind}
\nu (\mathcal C_1 \cap T^{-m} \mathcal C_2) \leq C \nu  (\mathcal C_1)
\nu  (\mathcal C_2)
\end{equation}
Given cylinders $\mathcal C_1$ of length $\lfloor n/2 \rfloor$ and $\mathcal C_2$ of length 
$\lceil n/2 \rceil$ (for ease of notation we drop the integer parts)
we say that $\mathcal C_1$ and $\mathcal C_2$ are compatible if there is some 
$x \in \mathcal C_1 \cap T^{ n/2 } \mathcal C_2$ so that 
$S_{\tau}(n,x) - n \nu(\tau) \in [L, L+1]$.
By \eqref{quasiind}, 
$$\nu(x \in X: S_{\tau}(n,x) - n \nu(\tau) \in [L, L+1] ) \leq 
\sum_{\mathcal C_1, \mathcal C_2} C \nu(\mathcal C_1) \nu(\mathcal C_2)
$$
where the sum
is taken over pairs of compatible cylinders 
$(\mathcal C_1, \mathcal C_2).$ 

We claim that if $\cC_1$ and $\cC_2$ are compatible then
for some $i = 1,2$, 
\begin{equation}
\label{highcyl}
|S_{\tau} (n/2, x)  - n \nu(\tau) / 2| \geq L/2 - \| \tilde \tau\|_{\kappa}/(1-\kappa) \text{ for all } x \in \mathcal C_i.
\end{equation}
Indeed, \eqref{highcyl} holds if there is some
$x \in \mathcal C_i$ with $|S_{\tau} (n/2, x)  - n \nu(\tau) / 2| \geq L/2$. Let us assume
$i=1$ (the case of $i=2$ is similar). 
By Lemma \ref{lemmaMel},
$$
\sum_{\mathcal C_1:\; \mathcal C_1 \text{ satisfies \eqref{highcyl}}} \nu (\mathcal C_1) \leq C
\frac{n^{\beta- 1 + \varepsilon'}}{(\min \{ L,n\})^{2 (\beta -1)}}.
$$
By Lemma \ref{lemtowerLLT}(B), for any $\mathcal C_1$ fixed,
$$
\sum_{\mathcal C_2 \text{ compatible with } \mathcal C_1} \nu(\mathcal C_2) \leq C n^{-1/2}.
$$
The lemma follows.
\end{proof}

\begin{lemma}
\label{lemmaLSLD}
 ({\em Local superlarge deviations}) 
 There is a constant $C$ so that 
 
 \noindent
($d=1$) For any $L$ with $|L| > 2 \nu(\tau) n $,
$$\nu(x \in X: S_{\tau}(n,x)  \in [L, L+1] ) \leq 
C \frac{n^{1/2}}{L^2}.
$$
($d=2$) For any $\vec L \in \mathbb R^2$ with $|\vec L| > 2 \nu(\tau) n$
$$\nu(x \in X: S_{(\check{\bm \varphi}, \tau)}(n,x) \in \vec L + [0,1]^2 ) \leq 
 \frac{C}{L^2}.
$$
\end{lemma}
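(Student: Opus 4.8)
The plan is to run a ``one big jump'' argument. Since $\tau=\tilde\tau_X$ and, in the case $d=2$, $\check{\bm\varphi}=(\check{\tilde{\bm\varphi}})_X$ are induced sums of \emph{bounded} functions on $\Delta$, one has $|S_{f_X}(n,x)|\le c_0\sum_{k=0}^{n-1}r(T^kx)$ for a constant $c_0$ depending only on $\|\tilde\tau\|_\infty$ and $\|\check{\tilde{\bm\varphi}}\|_\infty$; hence, on the event $S_{f_X}(n,x)\in\vec L+[0,1]^d$ with $|\vec L|>2\nu(\tau)n$, at least one return time $r(T^jx)$ must be of order $|\vec L|$. Moreover $\tau$ (and $\check{\bm\varphi}$) are dynamically H\"older on $X$, so $S_{f_X}(n,\cdot)$ has oscillation bounded by a fixed constant $c_1$ on every $n$-cylinder; it therefore suffices to bound $\sum\nu(\mathcal C)$ over those $n$-cylinders $\mathcal C$ on which $S_{f_X}(n,\cdot)$ lies within $c_1$ of $\vec L$, and the size of the target box enters only through an additive constant.

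The principal contribution comes from configurations in which a single coordinate has $|f_X(T^jx)|\ge|\vec L|/2$. I would union bound over the position $j$ and over the integer part $m$ of that coordinate. For fixed $j$ and $m$, applying the quasi-independence estimate \eqref{quasiind} a bounded number of times (to the coordinate blocks $\{0,\dots,j-1\}$, $\{j\}$, $\{j+1,\dots,n-1\}$) factorises the event, up to a multiplicative constant, into: (i) the ``big jump'' $\{$the $j$-th coordinate has integer part $m\}$, whose probability is at most $\nu(\tau\ge m)\le\nu(r\ge m/c_0)=O(m^{-\beta})$ by the tail bound \eqref{TailPoly}; and (ii) the event that the ergodic sum over the remaining two blocks of coordinates lies in a window of bounded width. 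Fixing the contribution of the shorter of those two blocks (which costs at most a factor $1$) and applying Lemma~\ref{lemtowerLLT}(B) to the longer one --- it has at least $(n-1)/2$ coordinates --- bounds (ii) by $O(n^{-d/2})$; crucially this bound is \emph{uniform} in the location of the window, because the Gaussian factor in Lemma~\ref{lemtowerLLT}(B) is bounded and its two error terms are $\le n^{-d/2}$ precisely because $\beta>2$. Summing the big-jump factor over $m$ telescopes, $\sum_{m\ge|\vec L|/2}\nu(\tau\in[m,m+1))=\nu(\tau\ge|\vec L|/2)=O(|\vec L|^{-\beta})$, and summing over the (at most $n$) positions $j$ gives a total of $O(n^{1-d/2}|\vec L|^{-\beta})\le O(n^{1-d/2}|\vec L|^{-2})$, which is the claimed bound ($n^{1/2}/L^2$ for $d=1$, $1/L^2$ for $d=2$, using that $|\vec L|$ is bounded below).

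It remains to control the complementary event, that every coordinate has $|f_X(T^kx)|<|\vec L|/2$. Splitting $\{0,\dots,n-1\}$ in half, one half carries an ergodic sum $\ge|\vec L|/2-O(1)$, still super-large for $\tfrac n2$ coordinates (as $|\vec L|/2>\nu(\tau)n=2\nu(\tau)\cdot\tfrac n2$), and one recurses: at each level this half either contains a coordinate exceeding half of its own target --- in which case the estimate of the previous paragraph applies to it directly --- or is halved again, while the complementary half, sitting at a value $\le$ the current target, is either in the moderate-deviation range (where Lemma~\ref{lemmaLLD} gives $O(n^{-\beta+1/2+\varepsilon'})\le O(n^{1/2}|\vec L|^{-2})$ once $\varepsilon'<\beta-2$) or is itself handled recursively. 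The recursion has depth $O(\log n)$, each step costs only the fixed constant from \eqref{quasiind}, and branches reached by many halvings are negligible since the product of the $O(n^{-d/2})$-type factors from Lemma~\ref{lemtowerLLT}(B) along a root-to-leaf path decays super-polynomially. I expect the real obstacle to be entirely bookkeeping: one must arrange this case split --- equivalently, an induction on $n$ for $Q_d(n):=\sup\{|\vec v|^2\nu(S_{f_X}(n,x)\in\vec v+[0,1]^d):|\vec v|\ge2\nu(\tau)n\}$ with target $Q_1(n)\le C_*n^{1/2}$, $Q_2(n)\le C_*$ --- so that the constant and the exact powers of $n$ and $|\vec L|$ close uniformly across the ``one dominant jump'', ``several comparable jumps'' and ``many medium jumps'' regimes. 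The case $d=2$ runs in exactly the same way, using that $\check{\bm\varphi}$ is also an induced sum of a bounded base function and that the two-dimensional version of Lemma~\ref{lemtowerLLT}(B) supplies the extra factor $n^{-1}$.
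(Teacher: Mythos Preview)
Your one-big-jump analysis is more elaborate than necessary. The paper's proof is two lines: first, since $\beta>2$ the function $\tau$ (and likewise $\check{\bm\varphi}$) lies in $L^2(\nu)$, so Chebyshev's inequality gives the \emph{global} bound $\nu(|S_{f_X}(n,\cdot)|\ge L)\le Cn/L^2$ directly (using $|L|>2\nu(\tau)n$ to absorb the centering). Second, the passage from global to local is done by exactly the same single half-split you already know from the proof of Lemma~\ref{lemmaLLD}: one half carries a deviation of size $\ge L/2-O(1)$, contributing the global $O(n/L^2)$; the complementary half is constrained to a window of bounded width, contributing $O(n^{-d/2})$ by Lemma~\ref{lemtowerLLT}(B). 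The product is $Cn^{1-d/2}/L^2$, which is the claim for both $d=1$ and $d=2$.

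Your approach, by contrast, tries to locate an individual coordinate responsible for the large sum and then recurses on the complementary event. The principal-contribution step is fine and in fact yields the sharper exponent $L^{-\beta}$ for that term. But the recursion is where the argument is genuinely incomplete: at each halving you pay the fixed constant from \eqref{quasiind}, and after $O(\log n)$ levels that is polynomial in $n$; your claim that the accumulated $(n/2^k)^{-d/2}$ factors compensate would have to be balanced against the summation over all admissible value configurations of the complementary halves at each level, and (as you yourself flag) it is not clear the induction on $Q_d(n)$ closes with a uniform constant. The point is that none of this machinery is needed: a single application of Chebyshev replaces the entire tail-plus-recursion apparatus for the global bound, and then one split---no recursion---suffices to localise.
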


\begin{proof}
We have by Chebyshev inequality that 
$$\nu(x \in X: |S_{\tau}(n,x)| \geq L ) \leq C \frac{n}{L^2} \text{ and }
\nu(x \in X: |S_{(\check{\bm \varphi}, \tau)}(n,x) | \geq |\vec L|  ) \leq 
 C \frac{n}{L^2}.
 $$
 The derivation of the local bound from the above global bound is the same as in the proof
 of Lemma \ref{lemmaLLD}.
\end{proof}

\begin{proof}[Proof of Theorem \ref{propYoungpoly}]
{\bf (H1)} and {\bf (H6)} are proved by Lemma \ref{lemtowerLLT} (A). 
In order to prove {\bf (H2)} and {\bf (H7)}, we decompose the
sum as
\begin{align*}
&S_1 = \sum_{n=1}^{w/3}, \quad
S_2 = \sum_{n = w/3}^{ w - w^{\gamma}},   \quad
&S_3 = \sum_{n: |n-w| \in \left[ K \sqrt w, w^{\gamma} \right]}, \quad
S_4 =  \sum_{ n = w + w^{\gamma}}^{5w/3},\quad
S_5 =  \sum_{n = 5w/3}^{\infty}
\end{align*}
where $\gamma = \min \{ \beta /4, 3/4\}$.
We need to show that for $d=1$, $f_X = \tau$ and for
$d=2$, $f_X = (\check{\bm \varphi}, \tau) $, that
\begin{equation}
\label{sums6}
\limsup_{w \rightarrow \infty} w^{\frac{d-1}{2}}\sum_{i=1}^6 S_i = o_{K\to\infty}(1).
\end{equation}

By Lemma \ref{lemmaLSLD}, we have 
$$
S_1 \leq C w^{-2} \sum_{n=1}^{w/3} n^{1-\frac{d}{2}} \leq C w^{-\frac{d}{2}} \ll w^{\frac{1-d}{2}}.
$$

We use Lemma \ref{lemmaLLD} with $L \lesssim n$ to conclude
$$ w^{\frac{d-1}{2}}(S_2+S_4)\leq  C w^{\frac{d-1}{2}} \sum_{m=w^{\gamma}}^{2w/3} 
C w^{\beta-1-\frac{d}{2}+\eps'} m^{-2\beta+ 2} \leq
C w^{\beta-\frac{3}{2} + \varepsilon' + \gamma (-2 \beta +3)}$$
which is $o(1)$ for sufficiently small $\varepsilon' = \varepsilon' (\beta)$ since $\gamma > 1/2$.

Next, we use Lemma \ref{lemtowerLLT} (B) to estimate $S_3$. Namely
 \begin{align*}
w^{\frac{d-1}{2}}S_3 &\leq C w^{\frac{d-1}{2}} \sum_{m = K \sqrt w}^{ w^{\gamma}}
\left( w^{-\frac{d}{2}} e^{-c m^2 /w}
+ w^{-\frac{d+\beta}{2} + 1} + w^{-\frac{d+1}{2}} \right) \\
& \leq C' e^{-c K} + C'w^{\gamma - \beta / 2 + 1/2} + C'w^{\gamma -1}.
\end{align*}
The above expression is $o_{K\to\infty, w\to\infty} (1)$ by the choice of $\gamma$.
Finally, we use Lemma \ref{lemmaLLD} with $L \gtrsim n$ to estimate $S_5$:
$$
S_5 \leq \sum_{n=5w/3}^{\infty} C n^{- \beta+ 1 - d/2 +\eps}
\leq C w^{- \beta+ 2 - d/2 +\eps} \ll w^{\frac{1-d}{2}}.
$$
We have verified \eqref{sums6}
and thus finished the proof of Theorem \ref{propYoungpoly}.
\end{proof}

\subsection{Proof of Proposition \ref{propunbdH}}
As mentioned earlier, under the conditions of Proposition \ref{propunbdH}, Theorem \ref{propYoungpoly}
implies 
{\bf (H1)}--{\bf (H7)} for $(\Phi, \bm{\varphi})$. Thus, by Theorem \ref{thm1}
and by \eqref{eq:factor2}, a {\it weaker} version of the MLCLT follows for $(\Psi, \bm{\chi})$: namely, when 
only those bounded and continuous test functions $\mathfrak X,\mathfrak Y: 
\aleph \rightarrow \mathbb R$ are allowed for which the corresponding
lift-ups $\mathfrak V = \mathfrak X \circ \iota, \mathfrak W= \mathfrak Y \circ \iota$
are supported on $\{ (x,s) \in \Omega: s \leq M\}$ for some finite $M$.

In order to complete the proof of Proposition \ref{propunbdH}, we need a
{\it stronger} version of the MLCLT for $(\Phi, \bm{\varphi})$: namely, all bounded and continuous
test functions $\mathfrak V, \mathfrak W$ on $\Omega$ are allowed. 
Recall that
$N_{t+s} = \max \{ n: S_{\tau}(n,x) < t+s\}.$
Approximating $\mathfrak V$ and $\mathfrak W$ by $\mathfrak V (x,s) 1_{s \leq M }$ and $\mathfrak W (x,s) 1_{s \leq M },$ 
we see that it suffices to show that 

\begin{align}
\lim_{M \rightarrow \infty} \lim_{t \rightarrow \infty} t^{1/2}
\mu \Bigg\{ (x,s): &(r(x) > M \text{ or } r(T^{N_{t+s} (x)}(x)) >M ) \nonumber \\
&\text{ and }
 \int_0^t \bm \varphi(\Phi^{s'}(x,s))ds'   - W(t) \in [0,1]
\Bigg\} =0, \label{eq:unbdH1}
\end{align}
uniformly for $W(t)$ satisfying \eqref{condMLCLT}. 
Recall that
$r_k=r(\Delta_{0,k})$ is the height of the tower above $\Delta_{0,k}$.
Consider the partition $\{ \Delta_{0,k}\}_{k \geq 1}$of $\Delta_0.$ 
Note that 
\begin{align*}
\mu((x,s): r(x) > t^{1/2 - \varepsilon}) &\leq \|\tilde{\tau}\|_{\infty}
\sum_{k: r_k \geq t^{1/2 - \varepsilon}} r_k \nu(\Delta_{0,k}) \\
& \leq C  \left[ t^{1/2 - \varepsilon}\nu (r \geq  t^{1/2 - \varepsilon}) + \sum_{m \geq  t^{1/2 - \varepsilon}} \nu(r \geq m) \right] \\
&  \leq C t^{(1/2 - \varepsilon )(1- \beta)} \ll t^{-1/2}
\end{align*}
assuming that $\epsilon< 1/2 - 1/(2 \beta -2)$.
Thus with the notations 
$$\mathcal I = \left\{  \int_0^t \bm \varphi(\Phi^{s'}(x,s))ds'   - W(t) \in [0,1]\right\}
\text{ and }\Delta_{0, \leq M} = \{ x \in \Delta_0: r(x) \leq M\},$$
it suffices
to verify that 
$$
\lim_{M \rightarrow \infty} \lim_{t \rightarrow \infty} t^{1/2}
(S_1 + S_2 + S_3) = 0,
$$
where
\begin{align*}
S_1 &= \sum_{k: r_k \in [M, t^{1/2 -\varepsilon}]}  
\mu((x,s)\in \mathcal I: x \in \Delta_{0,k}, T^{N_{t+s}(x)}(x) \in \Delta_{0,\leq M})\\
S_2 &= \sum_{l: r_l \in [M, t^{1/2 -\varepsilon}]}
\mu((x,s) \in \mathcal I: x \in \Delta_{0,\leq M}, T^{N_{t+s}(x)}(x) \in \Delta_{0,l})\\
S_3 &= \sum_{k: r_k \in [M, t^{1/2 -\varepsilon}]}  \sum_{l: r_l \in [M, t^{1/2 -\varepsilon}]}
\mu((x,s)\in \mathcal I: x \in \Delta_{0,k}, T^{N_{t+s}(x)}(x) \in \Delta_{0,l}).
\end{align*}

Using Lemma \ref{lemmaMel}(C) and proceeding the same way as in the proof of 
Theorem \ref{thm1}
there exits $C>0$ such that for any $t >0$, any $l$
with $r_l \leq t^{1/2 - \varepsilon}$, any $v \in \mathbb R$ and any $s \in [0, t^{1/2}]$
\begin{equation}
\label{eq:upperbdcyl}
\nu \left( x: \int_0^{N_{t+s}(x)} \bm \varphi (\Phi^{s'} (x,0)) ds' \in [v, v+1], 
T^{N_{t+s}(x)}(x) \in \Delta_{0,l} \right) 
\end{equation}
$$ \leq C r_l \nu(\Delta_{0,l}) t^{-1/2} .$$
We prove that
\begin{equation}
\label{eq:S3}
\lim_{M \rightarrow \infty} \lim{\sup}_{t \rightarrow \infty} t^{1/2} S_3 = 0,
\end{equation}
the proofs for $S_1$ and $S_2$ are similar and shorter. 
Pick some $k$ with $r_k \in [M, t^{1/2 - \varepsilon}]$. 
In particular, $|\int_0^{\tau(x)} \bm \varphi (\Phi^{s'} (x,0)) ds'| \leq C t^{1/2 -\varepsilon}$ for $x \in \Delta_{0,k}$.
Since the bijection 
$T^{r_k}: \Delta_{0,k} \rightarrow \Delta_0$ has bounded distortion,
\eqref{eq:upperbdcyl} implies
\begin{eqnarray*}
\nu \left( x \in \Delta_{0, k}: \int_{\tau (x)}^{N_{t+s}(x)} \bm \varphi (\Phi^{s'} (x,0)) ds' \in [v, v+1], 
T^{N_{t+s}(x)}(x) \in \Delta_{0,l} \right) \\
 \leq C r_l \nu(\Delta_{0,k}) \nu(\Delta_{0,l}) t^{-1/2}.
\end{eqnarray*}
Next, observe that 
$$
x \in \Delta_{0, k},\quad \int_{0}^{t} \bm \varphi (\Phi^{s'} (x,0)) ds' - W(t) \in [0,1], \quad
T^{N_{t+s}(x)}(x) \in \Delta_{0,l}
$$
imply 
$$
\left| \int_{\tau (x)}^{N_{t+s}(x)} \bm \varphi (\Phi^{s'} (x,0)) ds' - W(t) \right| \leq C (r_k + r_l).
$$
Thus $S_3$ can be bounded from above by
\begin{eqnarray*}
&&\sum_k \sum_l 
\mu \left( (x,s):
\begin{cases}
x \in \Delta_{0, k},\\
\left| \int_{\tau (x)}^{N_{t+s}(x)} \bm \varphi (\Phi^{s'} (x,0)) ds' - W(t) \right| \leq C (r_k + r_l),\\
T^{N_{t+s}(x)}(x) \in \Delta_{0,l}
\end{cases}
\right) \\
&&  \leq C\sum_{k: r_k \in [M, t^{1/2 -\varepsilon}]}  \sum_{l: r_l \in [M, t^{1/2 -\varepsilon}]} r_k r_l (r_k + r_l)  
\nu(\Delta_{0,k}) \nu(\Delta_{0,l}) t^{-1/2}\\
&&\leq C M^{3 - 2 \beta} t^{- 1/2}.
\end{eqnarray*}
We have verified \eqref{eq:S3} and finished the proof of Proposition \ref{propunbdH}. \qed

\section{Hyperbolic Young towers}
\label{ScHYT}

Let $\bm F: \bm M \rightarrow \bm M$ be a $\mathcal C^{1 + \varepsilon}$ diffeomorphism
of the Riemannian manifold $\bm M$. Assume that $\bm F$ satisfies assumptions (Y1)--(Y5) 
in Section 4.1 of \cite{PSZ17}.
These imply that there exists a "hyperbolic Young tower", namely a dynamical system $(\hat \Delta, \hat \nu, \hat F)$ which satisfies the following.
\begin{itemize}
\item
The base of the tower is the set
$\hat \Delta_0 = \hat \Delta_0^u \times \hat \Delta_0^s.$ The sets of the form
$A \times \hat \Delta_0^s$, $A \subset \hat \Delta_0^u$ be called
u-sets (similarly, sets of the form $\hat \Delta_0^u \times B$, $B \subset \hat \Delta_0^s$ are called s-sets).
Also, sets 
of the form $\Delta_0^u \times \{ x^s\}$ are called unstable manifolds and sets of the form $\{ x^u\} \times  \Delta_0^s$ are stable manifolds.
\item 
There is a partition of $\hat \Delta_0$ into s-sets $\hat \Delta_{0,k}=\hat \Delta^u_{0,k} \times \hat \Delta^s_{0}$ and positive integers $r_k$ so that
$\hat \Delta = \cup_{k\in \mathbb Z_+} \cup_{l=0}^{r_k -1} \hat \Delta_{l,k}$,
where $\hat \Delta_{l,k} = \{ (x,l): x \in \hat \Delta_{0,k}\}$.
\item For all $k$ and all $l =0,...,r_k-2$, $\hat F$ is an isomorphism between $\hat \Delta_{l,k}$ and $ \hat \Delta_{l+1,k}$ 
and $\hat F(x,l) = (x, l+1)$. Also
$\hat F$ is an isomorphism between $\hat \Delta_{r_k-1,k}$ and $\hat F(\hat \Delta_{r_k-1,k})$, the latter being a u-set of $\hat \Delta_0$.
Furthermore, if $x_1$ and $x_2$ belong to the same (un)stable manifold, so do $\hat F^{r_k}(x_1,0)$
and $\hat F^{r_k}(x_2,0)$. We write $\hat T = \hat F^{r_k - l}$ on $\hat \Delta_{l,k}$ and
$r(x^u, x^s) = r(x^u) = r_k$ for $(x^u, x^s) \in \hat \Delta_{0,k}$.
\item There is a mapping $\pi: \hat \Delta \rightarrow \bm M$ with $\pi|_{\hat \Delta_0}: \hat \Delta_0 \rightarrow \Lambda$ being a bijection,
where $\Lambda$ is a set with hyperbolic product structure and $\pi \circ \hat F = \bm F \circ \pi$.
\item Let $\Xi$ be the function on $\hat \Delta$ defined by $\Xi((x^s,x^u),l) = ((x^u, \bm x^s),l)$
with a fixed $\bm x^s$. Let $\tilde \Delta = \Xi (\hat \Delta)$ and 
$\tilde \nu = \Xi_* \hat \nu$. By the previously listed properties of $\hat F$, there is a well defined $\tilde F: \tilde \Delta \rightarrow \tilde \Delta$
such that $\Xi \circ \hat F = \tilde F \circ \Xi$. The dynamical system $(\tilde \Delta, \tilde \nu, \tilde F)$,
is an expanding
Young tower,  in the sense that it satisfies assumptions 
(A.1)--(A.4) of Section \ref{sec:towerpolydef}.
\item There exists some $a \in (0,1)$ so that for any $x,y$ on the same stable manifold in $\hat \Delta_0$, 
$d(\pi(\hat T(x)), \pi(\hat T(y))) < a d(\pi(x), \pi(y))$. Furthermore, for any $k$ and any $x,y$ on the same unstable manifold in $\hat \Delta_{0,k}$,
$d(\pi(x), \pi(y)) < a d(\pi(\hat T(x)), \pi(\hat T(y))) $
\end{itemize}

We also require that 
\begin{enumerate}
\item[(B1)] the expanding tower $(\tilde \Delta, \tilde \nu, \tilde F)$ satisfies \eqref{TailPoly} (this is slightly stronger requirement
than the ones in Section 4.1 of \cite{PSZ17}).
\item[(B2)] there are $K<\infty$ and $\hat\theta<1$ such that 
for every $k$, every $x,y \in \hat \Delta_{0,k}$ 
on the same stable manifold
  and every $0 \leq j \leq r_k -1$,
$$d(\pi(\hat F^j(x)), \pi(\hat F^j(y))) < K d(\pi(x), \pi(y)) \hat\theta^j . $$
\end{enumerate}

Consider the dynamical system $(\bm M, \lambda, \bm F)$ where $\lambda := \pi_* \hat \nu.$
Let $\upsilon$ be a positive H\"older roof function,
$\Psi^t$ is the corresponding suspension flow on the phase space $\aleph$. Let $\bm \chi$ be a zero mean continuous
observable so that
$\check{\bm \chi}(x)=\int_0^{\upsilon(x)} \bm\chi(\Psi^s x) ds$  
is H\"older.

Define $\hat \zeta: \hat \Delta \rightarrow \mathbb R_+$ by $\hat \zeta(x) = \upsilon (\pi (x))$. 
Let $\mathcal T$ be the phase space of
the suspension flow over $(\hat \Delta, \hat \nu, \hat F)$ with roof function $\hat \zeta$ 
and let $\hat{\bm \eta}: \mathcal T \rightarrow \mathbb R$
be defined by $\hat{\bm \eta} (x,s) = \bm \chi (\pi(x), s)$. Now we regard this flow as a suspension over the first return to the
base of the hyperbolic tower: let $\hat \tau (x) = \sum_{j=0}^{r(x) -1} \hat \zeta (\hat F^j (x))$, $\hat \Phi^t$ is the suspension
over $(\hat \Delta_0, \hat \nu|_{\hat \Delta_0}, \hat T)$ with roof function $\hat \tau$. Let the phase space of $\hat \Phi^t$ be denoted
by $\hat \Omega$ and its invariant measure be $\hat \mu = \hat \nu|_{\hat \Delta_0} \otimes Leb$.
We consider the observable $\hat {\bm \varphi}: \hat \Omega \rightarrow \mathbb R_+$ defined by 
$\hat {\bm \varphi}(x,s) = \hat \eta (x,s)$ (mind the identification $(x, \hat \zeta(x)) = (\hat F(x), 0) \in \mathcal T$).

We also introduce the suspension over the first return to the base
of the expanding tower $(\tilde \Delta, \tilde \nu, \tilde F)$. A fixed unstable manifold $\gamma^u$ is identified with the base
of the expanding tower, i.e. $\gamma^u$ is fully crossing $\Lambda$ in $\bm M$
and $\gamma^u \cap \Lambda = \pi \tilde \Delta_0$. 
Accordingly, any point $(x^u,l) \in \tilde \Delta$ is identified with $\bm F^l (\pi (x_u))$ and with $((x^u,\bm x^s),l) \in \hat \Delta$,
where $\{\bm x^u \} \times \hat \Delta_0^u$ is the lift-up of the unstable manifold $\gamma^u$ from $\Lambda$ to $\hat \Delta_0$. 
Let $T$ be the first return to the base, i.e. $T(x^u) = \sum_{j=0}^{r(x^u)-1} \tilde F(x^u)$.
The first return dynamics is $(X , \nu, T)$ where
$X = \tilde \Delta_0$ and $ \nu = \tilde \nu|_X.$
We consider the suspension flow over $(X, \nu, T)$
with roof function $\tau (x^u) = \sum_{j=0}^{r(x^u) -1} \upsilon (\bm F^j (\pi(x^u)))$. Denote this flow by $\Phi$, 
its phase space by $\Omega$ and consider 
the observable $\bm \varphi(x^u,s) =  \bm{\chi} (\pi(x^u),s )$ on $\Omega$ (with the identification $(x, \upsilon(x)) = (\bm F(x), 0) \in \aleph$).

By the above constructions and by (B1), we can apply Theorem \ref{propYoungpoly} and Propositions \ref{corpoly1}
to $(\Phi, \bm \varphi)$. 
However, if we want to conclude some result corresponding to Proposition \ref{propunbdH}, we need to
extend the MLCLT and the moderate deviation estimates from $(X, \nu, T)$ to $(\hat X = \hat \Delta_0, \hat \nu|_{\hat \Delta_0}, \hat T)$.
In order to do so, we introduce the metric $d_{\hat X}$ on $\hat X$. Let $d_{\hat X}(x,y) = 1$ if $x$ and $y$ belong to different
partition elements $\hat \Delta_{0,k}$. If $x = (x^u,x^s), y= (y^u,y^s) \in \hat \Delta_{0,k}$, then 
$d_{\hat X}(x,y) = d(\pi(\bm x^u, x^s), \pi(\bm x^u, y^s)) + \beta^{s(x^u, y^u)}$ with a fixed $\bm x^u$, where $s$ is the separation time
as in (A3) and $\beta <1$.
Let $\phi: \bm F \rightarrow \mathbb R^d$ be a piecewise H\"older function, $\hat \phi: \hat \Delta \rightarrow \mathbb R^d$
is its lift-up (i.e. $\hat \phi(x) = \phi (\pi (x))$). We also define $\tilde \phi: \tilde \Delta \rightarrow \mathbb R^d$
by $\tilde \phi (x^u, l) = \hat \phi((x^u, \bm x^s),l)$, $\hat \psi: \hat X \rightarrow \mathbb R^d $
by $\hat \psi (x) = \sum_{j=0}^{r(x)-1} \hat \phi(x,j)$ and 
$\tilde \psi: X \rightarrow \mathbb R^d $, $\tilde \psi (x^u) = \sum_{j=0}^{r(x^u)-1} \tilde \phi(x^u,j)$.
We shall use the following standard fact.

\begin{lemma}
\label{lemhyp}
There is a H\"older function $h: \hat X \rightarrow \mathbb R^d$ such that 
\begin{equation}
\label{hyph}
\hat \psi(x^u, x^s) = \tilde \psi (x^u) + h( x^u, x^s) - h (\hat T(x^u, x^s))
\end{equation}
\end{lemma}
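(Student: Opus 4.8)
The plan is to produce $h$ by the classical device of collapsing the stable direction along the dynamics. Fix the reference unstable leaf $\gamma^u$, whose lift to $\hat X$ is $\{(x^u,\bm x^s):x^u\in\hat\Delta_0^u\}$, and for $x=(x^u,x^s)\in\hat X$ write $\bar x=(x^u,\bm x^s)=\Xi(x)$ for the point on the same stable manifold as $x$ lying on that leaf; by the definitions of $\tilde\phi$ and $\tilde\psi$ one has $\hat\psi(\bar x)=\tilde\psi(x^u)$. I would set
$$ h(x)\ :=\ \sum_{n=0}^{\infty}\bigl(\hat\psi(\hat T^{n}x)-\hat\psi(\hat T^{n}\bar x)\bigr), $$
equivalently, sum $\hat\phi-\hat\phi\circ\Xi$ along the two $\hat F$-orbits started at $x$ and at $\bar x$. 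Since $x,\bar x$ lie on one stable manifold and $\hat T$ carries stable manifolds into stable manifolds, the pair $\hat T^{n}x,\hat T^{n}\bar x$ stays on one stable manifold, and the last bullet of the tower construction gives $d(\pi(\hat T^{n}x),\pi(\hat T^{n}\bar x))\le a^{n}d(\pi(x),\pi(\bar x))$. As $\hat\psi$ is a finite sum of $\hat\phi=\phi\circ\pi$ along one column and (B2) bounds the $\pi$-distances of the corresponding points inside a column by $K\hat\theta^{j}d(\pi\cdot,\pi\cdot)$, the restriction of $\hat\psi$ to any single stable manifold is H\"older for $d_{\hat X}$ (up to the H\"older stable holonomy); hence the $n$-th summand is $\lesssim a^{n\varepsilon}$ with $\varepsilon$ the H\"older exponent of $\phi$, and the series converges uniformly and is bounded.

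Next I would verify that $h$ is H\"older on $(\hat X,d_{\hat X})$. This is the usual ``dynamically H\"older'' estimate: for $x,y$ in the same partition element $\hat\Delta_{0,k}$ one splits the defining series at $n\approx s(x^u,y^u)/2$; below the cut the points $\hat T^{n}x,\hat T^{n}y$ lie in a common $s$-cylinder and one uses bounded distortion together with the H\"older regularity coming from (A3) and (B2); above the cut one uses the geometric bound on the summands just obtained. The contributions of deep columns are controlled using the polynomial tail bound (B1), which makes the relevant sums over column heights convergent.

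The remaining point, which I expect to be the main obstacle, is the cocycle identity. Telescoping the series (or, in the $\hat F$-picture, summing $\hat\phi-\hat\phi\circ\Xi$ over one return) yields
$$ h(x)-h(\hat T x)\ =\ \bigl(\hat\psi(x)-\hat\psi(\bar x)\bigr)+\mathcal E(x), $$
and since $\hat\psi(\bar x)=\tilde\psi(x^u)$ the first term is exactly the one wanted; the correction $\mathcal E$ arises because projecting to $\gamma^u$ does not commute with $\hat T$ at the top level of a column, so it is again a convergent ``stable sum'' of the same type, and it depends on $x$ only through $x^u$. One then has to check that $\mathcal E$ is cohomologically trivial, i.e. that $\hat\psi$ and $\tilde\psi\circ p$, with $p\colon\hat X\to X$ the stable projection, have equal sums over every $\hat T$-periodic orbit --- which follows from the fact that the $\hat T^{m}$-fixed point on a periodic stable manifold projects under $\pi$ to the corresponding $\bm F$-periodic point and that $\pi\circ\hat F=\bm F\circ\pi$ --- so that $\mathcal E$ may be absorbed into $h$ by adding a H\"older $\hat T$-coboundary pulled back from $X$. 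With the identity in hand, restricting $h$ to $\hat X$ gives \eqref{hyph}; the uniform contraction rates used throughout are precisely those furnished by the hyperbolicity of the tower and by (B2).
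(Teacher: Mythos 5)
Your transfer function is exactly the one the paper uses, $h(x)=\sum_{n\ge0}\bigl(\hat\psi(\hat T^nx)-\hat\psi(\hat T^n\bar x)\bigr)$, and your convergence and regularity arguments (uniform contraction along stable manifolds for the tail of the series, (B2) to compare the two $\hat F$-orbit segments inside a column, the usual splitting of the series for H\"older continuity in the unstable direction) are the intended ones; the appeal to (B1) is superfluous here, since the per-column comparison via (B2) is already uniform in the column height. You are also right --- and more careful than the paper's one-line ``direct computation'' --- that the telescoping does not close up: because the reference leaf is not $\hat T$-invariant, one gets
\begin{equation*}
h(x)-h(\hat Tx)\;=\;\hat\psi(x)-\tilde\psi(x^u)-h\bigl(\hat T(x^u,\bm x^s)\bigr),
\end{equation*}
so there is a leftover term $\mathcal E(x)=-h\bigl(\hat T(x^u,\bm x^s)\bigr)$, which is H\"older and depends on $x$ only through $x^u$, exactly as you say.

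The gap is in your final step. The periodic-orbit identity you invoke to dispose of $\mathcal E$ is not implied by the facts you cite, and is in fact false in general: $\sum_j\tilde\psi(T^jx^u)=\sum_j\hat\psi\bigl((T^jx^u,\bm x^s)\bigr)$ is a sum of $\hat\phi$ over points of the reference leaf and their images, and these points do \emph{not} form an $\bm F$-periodic orbit, so there is no reason for this to equal the genuine Birkhoff sum $\sum_j\hat\psi(\hat T^jy)$ over the periodic lift $y$. Concretely, if \eqref{hyph} held with any finite-valued $h$, then evaluating at a fixed point $y=\hat Ty$ with $y\ne(y^u,\bm x^s)$ would force $\hat\psi(y)=\hat\psi(y^u,\bm x^s)$, i.e.\ equality of two length-$r(y^u)$ Birkhoff sums of $\phi$ along two distinct $\bm F$-orbit segments on the same stable manifold; this fails for generic H\"older $\phi$ (already for $\phi=\upsilon$). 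So $\mathcal E$ is genuinely not a coboundary, no Livschitz-type argument can recover the literal statement, and the honest output of this construction is \eqref{hyph} with $\tilde\psi$ replaced by $\tilde\psi+g$, where $g(x^u)=h\bigl(\hat T(x^u,\bm x^s)\bigr)$ is H\"older on $X$ --- which is the form the subsequent lifting arguments actually need. You should therefore stop at that corrected identity rather than try to remove $g$; note that the paper's own proof asserts \eqref{hyph} ``by a direct computation'' and thus passes over precisely the term you identified.
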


\begin{proof}
Let
$
\displaystyle h( x^u, x^s) = \sum_{m=0}^{\infty} \hat \psi(\hat T^m (x^u, x^s)) -  \hat \psi(\hat T^m (x^u, \bm x^s)).
$
It is straightforward to verify that $h$ is H\"older using (B2). Equation \eqref{hyph} also
follows by a direct computation. 
\end{proof}

By Theorem \ref{propYoungpoly} implies that $(X, \nu, T)$ satisfies MLCLT.
Lemma \ref{lemhyp} and continuous mapping theorem allow to lift the MLCLT to
$(\hat \Delta_0, \hat \nu|_{\hat \Delta_0}, \hat T).$
Also since the ergodic sums of $\hat\psi$ and $\tilde\psi$ differ by $O(1)$ 
the moderate and large deviation estimates for $\hat\psi$ follow from the corresponding estimates
for $\tilde\psi.$ Proceeding as in Section \ref{secpoly} we obtain:

\begin{proposition}
\label{propunbdHhyp}
In the above setup let us also assume that 
$(\check{\bm \chi}, \upsilon)$ is non-arithmetic. Then for any continuous $\mathfrak X, \mathfrak Y: \aleph \rightarrow \mathbb R$,
any continuous and compactly supported $\mathfrak Z: \mathbb R \rightarrow \mathbb R$,
and any $W(t)$ with $W(t) / \sqrt t \rightarrow W$,
\begin{align}
&\lim_{t \rightarrow \infty} t^{1/2} \int_{\aleph} \mathfrak X (x,s) \mathfrak Y( \Psi^t (x,s))
\mathfrak Z \left( \int_0^t \bm \chi(\Psi^{s'}(x,s))ds'   - W(t) \right) d\kappa(x,s) \nonumber \\
&= \mathfrak g_{\Sigma} (W) 
\int \mathfrak X d \kappa \int \mathfrak Y d \kappa \int \mathfrak Z d Leb .
\nonumber
\end{align}
\end{proposition}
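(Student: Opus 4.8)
The plan is to carry the hypotheses of Section~\ref{sec3} over to the hyperbolic first return map, apply Theorem~\ref{thm1} in Case~{\bf (A)}, push the conclusion down to $(\Psi,\bm\chi)$ through the factor map, and then remove the restriction on the supports of the test functions by rerunning the truncation argument from the proof of Proposition~\ref{propunbdH}. For the first part I would invoke Lemma~\ref{lemhyp} with $\phi=(\check{\bm\chi},\upsilon)$ to get a H\"older $h:\hat X\to\mathbb R^2$ with $\hat\psi=\tilde\psi\circ\pi^u+h-h\circ\hat T$, where $\hat\psi=(\check{\hat{\bm\varphi}},\hat\tau)$, $\tilde\psi=(\check{\bm\varphi},\tau)$ and $\pi^u(x^u,x^s)=x^u$; thus $S_{\hat\psi}(n,\cdot)=S_{\tilde\psi}(n,\pi^u\cdot)+h-h\circ\hat T^n$ with $\|h\|_\infty<\infty$. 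By Theorem~\ref{propYoungpoly}, $(T,(\check{\bm\varphi},\tau))$ satisfies the MLCLT, and non-arithmeticity of $(\check{\bm\chi},\upsilon)$ forces the linearized group of $(\check{\bm\varphi},\tau)$, hence of $(\check{\hat{\bm\varphi}},\hat\tau)$, to equal $\mathbb R^2$ (Lemmas~\ref{lem:Gouezel1.4} and~\ref{lem:suspgroup}, cf.\ Proposition~\ref{proppoly2}), i.e.\ we are in Case~{\bf (A)}. Since $\{\hat\Delta_{0,k}\}$ consists of $s$-sets one has $\{\hat T^n x\in\hat\Delta_{0,l}\}=\{T^n\pi^u x\in\tilde\Delta_{0,l}\}$, and $(\pi^u)_*(\hat\nu|_{\hat X})=\nu$; feeding these into the continuous mapping theorem applied to the bounded, a.e.\ continuous $h$ (exactly as in Lemma~\ref{lemma:nonmin}) transports the MLCLT from $(T,(\check{\bm\varphi},\tau))$ to $(\hat T,(\check{\hat{\bm\varphi}},\hat\tau))$, giving {\bf (H1)} and {\bf (H6)}; the one point needing care is that test functions on $\hat X$ depend on the stable coordinate, which I would handle by the standard device of replacing them, up to $o(1)$, by functions constant on stable leaves, using the uniform exponential contraction along stable manifolds from (B2) and the last displayed estimate of the tower construction. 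Because $S_{\hat\psi}$ differs from $S_{\tilde\psi}\circ\pi^u$ by at most $2\|h\|_\infty$, Lemmas~\ref{lemmaMel}--\ref{lemmaLSLD} and parts (B),(C) of Lemma~\ref{lemtowerLLT} hold verbatim for $\hat T$ (with $v$ shifted by a bounded constant), and the computation in the proof of Theorem~\ref{propYoungpoly} then yields {\bf (H2)} and {\bf (H7)}.

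Next I would let $\hat\Phi$ be the suspension over $(\hat X,\hat\nu|_{\hat X},\hat T)$ with roof $\hat\tau$ and observable $\hat{\bm\varphi}$. Here {\bf (H4)} and {\bf (H5)} hold because $\bm\chi$ has zero mean and is continuous on the compact space $\aleph$, while {\bf (H3)} (mixing of $\hat\Phi$) follows from the previous paragraph via Proposition~\ref{prop1} in Case~{\bf (A)}. Theorem~\ref{thm1}, Case~{\bf (A)}, then gives the MLCLT for $(\hat\Phi,\hat{\bm\varphi})$ with $\mathcal V=\mathbb R$, $R=0$, $H=0$, but — since $\hat\tau$ is unbounded — only for test functions supported on $\{(x,s)\in\hat\Omega:s\le M\}$. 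As $(\aleph,\kappa,\Psi^t)$ is a factor of $(\hat\Omega,\hat\mu,\hat\Phi^t)$ through the map $\hat\iota$ that restricts to $(x,s)\mapsto(\pi(x),s)$ below the first floor, the analogue of~\eqref{eq:factor2} (with $\iota$ replaced by $\hat\iota$) carries this restricted MLCLT to $(\Psi,\bm\chi)$: it holds whenever the lift-ups $\mathfrak X\circ\hat\iota$ and $\mathfrak Y\circ\hat\iota$ are supported on $\{s\le M\}$.

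Exactly as in the proof of Proposition~\ref{propunbdH}, it then remains to show that
$$\lim_{M\to\infty}\ \limsup_{t\to\infty}\ t^{1/2}\,\hat\mu\Bigl\{(x,s):\ \bigl(r(x)>M\ \text{or}\ r(\hat T^{\hat N_{t+s}(x)}x)>M\bigr)\ \text{and}\ \textstyle\int_0^t\hat{\bm\varphi}(\hat\Phi^{s'}(x,s))\,ds'-W(t)\in[0,1]\Bigr\}=0$$
uniformly for $W(t)$ with $W(t)/\sqrt t\to W$. The part with $r(x)>t^{1/2-\eps}$ contributes $\ll t^{-1/2}$ by~\eqref{TailPoly} and $\beta>2$, using $\hat\nu(\hat\Delta_{0,k})=\tilde\nu(\tilde\Delta_{0,k})$ and the fact that $r$ is the common return time — this is the estimate already displayed in the proof of Proposition~\ref{propunbdH}. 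For $r_k,r_l\in[M,t^{1/2-\eps}]$ one splits into the three sums $S_1,S_2,S_3$ of that proof; the only input needed is the hyperbolic analogue of~\eqref{eq:upperbdcyl}, i.e.\ $\hat\nu\bigl(x:\int_0^{\hat N_{t+s}(x)}\hat{\bm\varphi}(\hat\Phi^{s'}(x,0))\,ds'\in[v,v+1],\ \hat T^{\hat N_{t+s}(x)}x\in\hat\Delta_{0,l}\bigr)\le C\,r_l\,\hat\nu(\hat\Delta_{0,l})\,t^{-1/2}$, which follows from Lemma~\ref{lemtowerLLT}(C) on the expanding tower together with the $O(1)$ bound on $h$ and the identity $\{\hat T^n x\in\hat\Delta_{0,l}\}=\{T^n\pi^u x\in\tilde\Delta_{0,l}\}$. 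The bounded-distortion manipulations of that proof then give $t^{1/2}S_3\le CM^{3-2\beta}$ and, more easily, analogous bounds for $S_1$ and $S_2$; letting $M\to\infty$ finishes the argument.

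I expect the genuinely delicate step to be the transfer, in the first paragraph, of the MLCLT \emph{with} test functions from the expanding quotient to the hyperbolic tower: the test functions on $\hat X$ really do depend on the stable coordinate, so one has to marry the H\"older coboundary of Lemma~\ref{lemhyp} with an approximation by functions constant on stable leaves and the exponential contraction of forward iterates along stable manifolds. Everything after that is bookkeeping — the recurring point being that the ergodic sums on the two towers differ by a bounded amount, so each deviation estimate of Section~\ref{secpoly} transfers with at most a constant shift of its argument.
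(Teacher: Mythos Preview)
Your proposal is correct and follows essentially the same route as the paper: lift the MLCLT from the expanding quotient to the hyperbolic tower via Lemma~\ref{lemhyp} and the continuous mapping theorem, transfer the deviation bounds using that the ergodic sums differ by $O(1)$, and then rerun the truncation argument of Proposition~\ref{propunbdH}. The paper compresses all of this into one paragraph (``Proceeding as in Section~\ref{secpoly}\dots''), whereas you spell out each hypothesis {\bf (H1)}--{\bf (H7)} and the passage through Theorem~\ref{thm1} and the factor identity~\eqref{eq:factor2}; your added care about test functions on $\hat X$ depending on the stable coordinate is a point the paper simply absorbs into ``continuous mapping theorem'' but which indeed requires the standard stable-leaf approximation you describe.
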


\section{Examples}
\label{secex}

\subsection{iid random variables (reward renewal processes)}
\label{SS:iid}

Let $\mathbb P$ be a probability measure supported on a compact subset of $\mathbb R \times \mathbb R_+$.
Assume that $\int x_1 d \mathbb P(x_1,y_1) =0$ and that the minimal translated groups supporting the measure 
$\mathbb P_2$ defined by
$\mathbb P_2 (A) := \mathbb P (\mathbb R \times A)$ is either $\mathbb R$ or $r + \alpha \mathbb Z$ with $r / \alpha \notin \mathbb Q$.
Let $X= (\mathbb R \times \mathbb R_+)^{\mathbb Z_+}$ equipped with the product topology, $T: X \rightarrow X$
is the left shift and
$\nu = \mathbb P ^{{\otimes}\mathbb Z_+}$.
For any $\underline x = ((x_1,y_1),(x_2,y_2),...) \in X$, let 
$\tau(\underline x)= y_1$ and $\bm \varphi (\underline x, t) = \varphi ((x_1,y_1),t)$ be a continuous function (where
$t \in [0, y_1]$), which also satisfies $\check{\bm \varphi} (\underline x) = x_1$.
Then {\bf (H1)} - {\bf (H7)}
can be proved by a much simplified version of our proof of Theorem \ref{propYoungpoly} based on 
classical results in probability theory. Namely, {\bf (H1)} and {\bf (H6)} hold by
\cite{R62} and \cite{S65}. The proof of Lemma~\ref{lemtowerLLT} also extends since
\cite{R62} and \cite{S65} use the Fourier method. Finally, the moderate deviation estimates follow from 
e.g. Theorem 5.23 in \cite{P75}, Chapter~5. Consequently the results of
Section \ref{sec:mainres} apply to iid random variables.

This example could be used to illustrate that the MLCLT does not always hold for suspension flows.

Let $(X_1,Y_1)$ be a random vector that can take the following three values, 
all with
probability $1/3$: $(-1,2-\sqrt 2)$, $(0,1)$, $(1, \sqrt 2 -1)$. Let $(X_i, Y_i)$ be iid. 
Let $t_n = \sum_{i=1}^{n} Y_i$,
$N_{t} = \max \{ n: \sum_{i=1}^{n} Y_i \leq t\}$ and $S_n = \sum_{i=1}^n X_i$. Then it is well known that
$S_{N_{t}}$ satisfies the CLT as well as many other limit theorems (see e.g.  \cite{GW93}) but we claim that the 
it does not satisfy the LCLT. 
Indeed, it is easy to check that 
$S_{N_{t}} = 0 $ implies that the last renewal time before $t$ is $\lfloor t \rfloor$ and $S_{N_{\lfloor t \rfloor}} =0$.
Thus for $K \gg 1$ positive integers,
$\mathbb P (S_{N_{K + a}} = 0) \sim \mathbb P (S_{N_{K+b}} = 0)$
if and only if $\lfloor a \rfloor$ and $\lfloor b \rfloor$ fall into the same partition element of 
$$\{ [0, \sqrt 2 -1), [\sqrt 2 -1, 2 - \sqrt 2), [2 - \sqrt 2, 1)\}.$$
This shows that $\displaystyle \lim_{t \rightarrow \infty} t^{1/2} \mathbb P (S_{N_t} = 0)$ only exists along subsequences.

The above example fits into the abstract framework at the beginning of Example
\ref{SS:iid}. Namely,
$\mathbb P$ is the uniform measure supported on 
the three points $(-1,2-\sqrt 2)$, $(0,1)$, $(1, \sqrt 2 -1)$. Then the minimal group supporting $\mathbb P$,
i.e supporting the values of $(\check{\bm \varphi}, \tau)$ is the subgroup generated by $(0,1)$ and $(1, \sqrt 2)$;
and the translation is zero. Consequently, the linearized group is the same as the minimal group and we are in
Case {\bf (D)}. Note that this is not a generic example among probability measures on three atoms. Indeed, in the
generic case, the translation of the minimal lattice would not be rationally related to the lattice and hence the linearization
would give $\mathbb R^2$, i.e. Case {\bf (A)}. Thus Case {\bf (A)} is generic even among discrete distributions.

\subsection{Axiom A flows}
\label{SSAxiomA}

Let $\Psi^t$ be a $\mathcal C^2$ Axiom A flow,
which is topologically transitive on a locally
maximal hyperbolic set $\Lambda$.
Then Bowen \cite{B73} and Bowen and Ruelle \cite{BR75} proved that
there exists
a topologically mixing subshift of finite type
$(\Sigma_A, \sigma)$ and a positive H\"older
roof function $\tau: \Sigma_A \rightarrow \mathbb R_+$
such that for the corresponding
suspension flow $\Phi^t: \Omega \rightarrow \Omega$
and for a suitable Lipschitz continuous surjection
$\rho: \Sigma_A \rightarrow \Lambda$, the following diagram commutes:
\[ \begin{tikzcd}
\Omega \arrow{r}{\Phi^t} \arrow[swap]{d}{\rho} & \Omega \arrow{d}{\rho} \\%
\Lambda \arrow{r}{\Psi^t}& \Lambda
\end{tikzcd}
\]
Let $\lambda$ be an equilibrium measure 
on $\Lambda$ with H\"older potential $G.$ Then $\rho$ is a measure theoretic
isomorphism
between $(\Omega, \mu, \Phi^t)$ 
and $(\Lambda, \lambda, \Psi^t),$
where $\mu$ is a Gibbs measure with potential $\bar{G}=G\circ \rho.$
 By general theory, $\mu = \nu \otimes Leb /\nu(\tau)$,
where $\nu$ is a measure on $\Sigma_A$, invariant
under $\sigma$ (the equilibrium state of $\check{\bar G} - P(G) \tau$,
where $P$ is the pressure).
Thus the MLCLT for $(\Psi^t, \lambda)$ is implied by the MLCLT for $(\Phi^t, \mu)$.
For the proof of the latter one, a simplified version of
Section \ref{secpoly} applies (e.g. {\bf (H2)} and {\bf (H6)}
follow from \cite{GH88} and \cite{G89}).
Thus we have the following analogue of Corollary \ref{corpoly1}.

\begin{proposition}
\label{cor:min2}
Consider $\Psi^t:\Lambda\to\Lambda$ be Axiom A flow, $\lambda$ be a Gibbs measure and 
$\psi:\Lambda\to\mathbb{R}$ be a H\"older observable. Denote $\bm \varphi=\psi\circ \rho.$
Assume that $(\check{\bm \varphi}, \tau)$
is minimal and its linearized group falls into cases {\bf (A)}, {\bf (B)} or {\bf (C)}. 
Then 
the conclusion of Theorem \ref{thm1} holds for $(\Psi,  \psi)$
with $h(x) = 0$ and $h_{\tau}(x) = 0$.
\end{proposition}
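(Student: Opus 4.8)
The plan is to reduce the statement to Theorem \ref{thm1} applied to the symbolic suspension flow and then transport the conclusion back to $\Lambda$ through the coding map. By Bowen and Bowen--Ruelle, $\rho$ induces a measure-theoretic isomorphism between $(\Omega,\mu,\Phi^t)$ and $(\Lambda,\lambda,\Psi^t)$ intertwining the flows, where $\nu$ is the equilibrium state of $\check{\bar G}-P(G)\tau$ and $\mu=\nu\otimes Leb/\nu(\tau)$; since $\bm\varphi=\psi\circ\rho$ (read on $\Omega$), it suffices to establish the MLCLT for $(\Phi,\bm\varphi)$ and then push it forward by $\rho$. Because $\Sigma_A$ is compact and $\tau$ is H\"older and positive, $\Omega$ is compact and $\tau$ is bounded, so by Remark \ref{bddtau} arbitrary bounded continuous test functions on $\Lambda$ pull back to admissible test functions on $\Omega$, and no analogue of {\bf (H8)} is needed in cases {\bf (A)}--{\bf (C)}. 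Replacing $\psi$ by $\psi-\lambda(\psi)$ and $W(t)$ by $W(t)+\lambda(\psi)t$ (which is harmless), we may assume $\lambda(\psi)=0$, i.e.\ {\bf (H4)}; {\bf (H5)} holds since $\bm\varphi$ is H\"older, hence bounded and continuous, on $\Omega$; and $\check{\bm\varphi}(x)=\int_0^{\tau(x)}\bm\varphi(x,s)\,ds$ is H\"older on $\Sigma_A$ because $\tau$ and $\bm\varphi$ are.

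Next I would verify {\bf (H1)}, {\bf (H2)} for $\tau$ and {\bf (H6)}, {\bf (H7)} for $(\check{\bm\varphi},\tau)$ at the base map level, which is where one uses a simplified version of Section~\ref{secpoly}. For a topologically mixing subshift of finite type the twisted transfer operators $P_t u=P(e^{i\langle t,f_X\rangle}u)$, with $f_X=\tau$ or $f_X=(\check{\bm\varphi},\tau)$, act quasicompactly on the H\"older space with an isolated simple leading eigenvalue, so the Nagaev--Guivarc'h/Rousseau-Egele method gives the LCLT with error estimates (the analogue of Lemma~\ref{lemtowerLLT}(A)--(C)); since the "tails'' are now exponential rather than of order $O(n^{-\beta})$, the analogues of Lemmas~\ref{lemmaMel}--\ref{lemmaLSLD}, and hence {\bf (H2)} and {\bf (H7)}, hold with arbitrarily large error exponents. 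This is exactly the content cited from \cite{GH88} and \cite{G89} (see also \cite{AD01}). The non-minimal/arithmetic directions—i.e.\ when $M(\check{\bm\varphi},\tau)$ is a proper closed subgroup of $\mathbb R^2$—are handled precisely as in the proof of Lemma~\ref{lemtowerLLT}(A), using the Aaronson--Denker description of the peripheral eigenvalues of $P_t$, and one checks that the group $M(\cdot)$, the translate $r(\cdot)$ and the covariance $\Sigma$ produced by this spectral analysis are those of Definition~\ref{defgroup}. With {\bf (H1)}, {\bf (H2)} established, and with $(\check{\bm\varphi},\tau)$ minimal and its linearized group in case {\bf (A)}, {\bf (B)} or {\bf (C)}, Proposition~\ref{prop1} gives that $\Phi$ is mixing, which is {\bf (H3)}.

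At this point all of {\bf (H1)}--{\bf (H7)} hold, and since $(\check{\bm\varphi},\tau)$ is minimal we may take $h=h_\tau=0$ in \eqref{defh} and \eqref{defh_tau}. Theorem~\ref{thm1} in cases {\bf (A)}--{\bf (C)} then yields the MLCLT for $(\Phi,\bm\varphi)$ with $\mathcal V$, $R$, $H$ and $\Sigma(\bm\varphi)$ as listed there, the formulas for $H_B$ and $H_C$ simplifying because $h=h_\tau=0$. Pushing this forward through the isomorphism $\rho$ (and recalling that the pushed-forward test functions are arbitrary bounded continuous functions on $\Lambda$, and that $H$ descends to $\Lambda$ since $\rho$ is an isomorphism, unlike the non-invertible situation of Proposition~\ref{propunbdH}) gives the conclusion of Theorem~\ref{thm1} for $(\Psi,\psi)$.

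The only genuinely non-routine point is the verification at the symbolic level for an observable that is \emph{not} non-arithmetic: one must identify the group produced by the spectral analysis of the twisted transfer operator with the group appearing in the MLCLT, and one must ensure the LCLT error terms are uniform enough (uniformly in the relevant compact sets of parameters) to run the Riemann-sum argument used in the proof of Theorem~\ref{thm1}. Once the spectral-gap picture for the subshift is in place, however, this is the same bookkeeping as in Section~\ref{secpoly}, and in fact strictly easier there because quasicompactness of $P$ is available directly rather than having to be extracted from a tower construction.
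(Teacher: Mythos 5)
Your proposal is correct and takes essentially the same route as the paper: code the Axiom~A flow as a suspension over a subshift of finite type via Bowen--Ruelle, transport the problem through the measure-theoretic isomorphism $\rho$, verify {\bf (H1)}--{\bf (H7)} for the base subshift by the transfer-operator (Nagaev--Guivarc'h/Aaronson--Denker) method---which is strictly easier than in Section~\ref{secpoly} because the return time is bounded and the tails are exponential---then invoke Theorem~\ref{thm1} and push the conclusion back to $\Lambda$. The paper's own ``proof'' is the terse paragraph preceding the proposition, citing \cite{GH88} and \cite{G89} and saying ``a simplified version of Section~\ref{secpoly} applies''; you have just spelled out what that simplification is, including the deduction of {\bf (H3)} from Proposition~\ref{prop1} in cases {\bf (A)}--{\bf (C)} and the observation that $\tau$ bounded removes the test-function restriction of Remark~\ref{bddtau}.
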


We mention that \cite{W96} essentially proves Proposition \ref{cor:min2}, case {\bf (A)} (note that the
flow-independence condition of \cite{W96} Theorem 2 implies case {\bf (A)} by Proposition 3 of \cite{W96}).

\subsection{Suspensions over Pomeau-Manneville maps}

Consider next Pomeau-Manneville maps 
(a.k.a. Liverani-Saussol-Vaienti maps). 
Namely, let $\mathbb M = [0,1]$ and $\mathbb  F: \mathbb M \rightarrow 
\mathbb M$ be
defined by
$$
\mathbb  F(x) =
\begin{cases}
x(1 + 2^{\alpha} x^{\alpha}) & \text{if } 0 \leq x \leq 1/2\\
2x -1  & \text{if } 1/2 < x \leq 1.
\end{cases}
$$
Suppose that $\alpha<1.$
Then the first return map to $[\frac{1}{2}, 1]$ gives an expanding Young tower satisfying the assumptions
of Section \ref{secpoly} (see e.g. the discussion in Section 1.3 of \cite{G05}).
In particular,
$\mathbb F$ has a unique absolutely continuous
invariant probability measure $\lambda$ 
(with Lipschitz density on any compact subinterval of $(0,1]$, see \cite[Lemma 2.3]{LSV99}).
In addition \eqref{TailPoly} is satisfied with $\beta=1/\alpha.$ In particular, 
$\beta>2$ if $\alpha<\frac{1}{2}.$
Consider a H\"older roof function
$\upsilon$ on $\mathbb M$, and let $\Psi$ be the corresponding suspension semiflow on the phase space $\aleph$. 
Let $\bm \chi: \aleph \rightarrow \mathbb R$ be a zero mean continuous observable so that 
$\check{\bm \chi}$ is H\"older.  Applying Proposition \ref{propunbdH} we get

\begin{proposition}
MLCLT in valid for suspension semiflows over  Pomeau-Manneville maps with
H\"older roof functions provided that $\alpha<\frac{1}{2}$ and the pair $(\check{\bm \chi}, \upsilon)$ is non-arithmetic. 
\end{proposition}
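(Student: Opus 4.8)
The plan is to obtain the Proposition as a direct application of Proposition \ref{propunbdH}, so that the entire proof reduces to checking the hypotheses of that proposition for the Pomeau--Manneville data. First I would record the structural facts, all of which are classical. For $\alpha<1$ the first return map of $\mathbb F$ to $[\tfrac12,1]$ realizes $\mathbb F$ in the setup preceding Proposition \ref{corpoly1}: $\mathbb F$ satisfies assumptions~1--4 of Section~1.3.1 of \cite{G05} and the associated expanding Young tower $(\Delta,F,\tilde\nu)$ satisfies (A1)--(A4) (see Section~1.3 of \cite{G05}); by \cite[Lemma~2.3]{LSV99}, $\lambda$ is the unique absolutely continuous invariant probability measure (with Lipschitz density on compact subintervals of $(0,1]$), hence the unique SRB measure. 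Because $\upsilon$ is a positive H\"older roof function and $\check{\bm\chi}$ is H\"older, the lifts $\tilde\tau=\upsilon\circ\rho$ and $\check{\tilde{\bm\varphi}}$ belong to $C_{\varkappa}(\Delta,\mathbb R)$, the induced function $(\check{\bm\varphi},\tau)=(\check{\tilde{\bm\varphi}},\tilde\tau)_X$ is of the form required by Theorem \ref{propYoungpoly}, and $\tau=\tilde\tau_X\geq\inf\upsilon>0$ is bounded away from zero. Finally, the tower return times obey \eqref{TailPoly} with $\beta=1/\alpha$, so the restriction $\alpha<\tfrac12$ is precisely what gives the polynomial--tail hypothesis $\beta>2$ standing in Propositions \ref{corpoly1} and \ref{propunbdH}.

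The one point requiring an argument rather than a citation is the transfer of the arithmetic hypothesis from the manifold to the base $X$ of the return tower, and I expect this to be the main (indeed the only) obstacle. By Proposition \ref{proppoly2}, $\hat{\mathcal V}_{(\check{\bm\varphi},\tau)}$ falls into the same one of Cases {\bf (A)}--{\bf (E)} as $\hat{\mathcal V}_{(\check{\bm\chi},\upsilon)}$; since $(\check{\bm\chi},\upsilon)$ is non-arithmetic we have $M((\check{\bm\chi},\upsilon))=\mathbb R^2$, hence $\hat{\mathcal V}_{(\check{\bm\chi},\upsilon)}=\mathbb R^2$, i.e. Case {\bf (A)}, and therefore $\hat{\mathcal V}_{(\check{\bm\varphi},\tau)}=\mathbb R^2$ as well. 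It remains to verify that $(\check{\bm\varphi},\tau)$ is non-degenerate. Suppose some nonzero linear functional $\ell$ on $\mathbb R^2$ made $\ell\circ(\check{\bm\varphi},\tau)$ $T$-cohomologous to a constant. After a linear change of coordinates, the projection identity \eqref{eq:proj} together with $M((\check{\bm\chi},\upsilon))=\mathbb R^2$ gives $M(\ell\circ(\check{\bm\chi},\upsilon))=\mathbb R$, and Lemma \ref{lem:Gouezel1.4} then gives $M(\ell\circ(\check{\tilde{\bm\varphi}},\tilde\tau))=\mathbb R$ (the only finite index subgroup of $\mathbb R$ is $\mathbb R$ itself). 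On the other hand, the lift construction in the proof of Lemma \ref{lem:suspgroup} shows that if $\ell\circ(\check{\bm\varphi},\tau)=(\ell\circ(\check{\tilde{\bm\varphi}},\tilde\tau))_X$ were $T$-cohomologous to a constant $c$, then $\ell\circ(\check{\tilde{\bm\varphi}},\tilde\tau)$ would be $F$-cohomologous to a function valued in $c\mathbb Z$, forcing $M(\ell\circ(\check{\tilde{\bm\varphi}},\tilde\tau))\subseteq c\mathbb Z\neq\mathbb R$, a contradiction. Hence no such $\ell$ exists and $(\check{\bm\varphi},\tau)$ is non-degenerate; moreover, being in Case {\bf (A)}, the coboundary term in Theorem \ref{thm1} vanishes ($H=0$), so the minimality clause in the hypotheses of Proposition \ref{corpoly1} plays no role here.

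With all hypotheses of Proposition \ref{propunbdH} in place, that proposition applies verbatim and yields, for every continuous $\mathfrak X,\mathfrak Y\colon\aleph\to\mathbb R$, every continuous compactly supported $\mathfrak Z\colon\mathbb R\to\mathbb R$, and every $W(t)$ with $W(t)/\sqrt t\to W$,
\[
\lim_{t\to\infty} t^{1/2}\int_{\aleph}\mathfrak X(x,s)\,\mathfrak Y(\Psi^t(x,s))\,\mathfrak Z\!\left(\int_0^t\bm\chi(\Psi^{s'}(x,s))\,ds'-W(t)\right)d\kappa(x,s)=\fg_{\Sigma}(W)\int\mathfrak X\,d\kappa\int\mathfrak Y\,d\kappa\int\mathfrak Z\,d\,Leb,
\]
which is the asserted MLCLT for the suspension semiflow over the Pomeau--Manneville map.
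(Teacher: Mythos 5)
Your proposal is correct and takes essentially the same route as the paper: realize the Pomeau--Manneville map as an expanding Young tower via the first return map to $[\tfrac12,1]$, observe that \eqref{TailPoly} holds with $\beta=1/\alpha>2$ exactly when $\alpha<\tfrac12$, and apply Proposition \ref{propunbdH}. The paper leaves the transfer of non-arithmeticity to the tower base and the non-degeneracy check implicit, whereas you spell them out via Lemma \ref{lem:Gouezel1.4}, \eqref{eq:proj} and Proposition \ref{proppoly2}; this fills in detail but is not a different method.
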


We note that in case $\alpha >1$, the invariant measure $\lambda$ is infinite.
This case is discussed 
in \cite{DN17}. The approach of \cite{DN17} is somewhat similar to that of the present paper.

\subsection{Sinai billiard flows with finite horizon}
\label{SSSinai}

Let $\mathcal D = \mathbb T^2 \setminus \cup_{i=1}^I B_i$, where $\mathbb T^2$ is the $2$-torus
and $B_1, ..., B_I$ are disjoint strictly convex subsets of $\mathbb T^2$, whose boundaries
are $\mathcal C^3$ smooth with curvature bounded away from zero. The Sinai billiard flow 
$\Psi^t$ describes a point particle moving with unit speed  
in the interior of $\mathcal D$
and having specular reflection on $\partial \mathcal D$ (i.e. the angle of incidence equals the
angle of reflection). The phase space of $\Psi^t$ is thus 
$\aleph = \mathcal D \times \mathcal S^1$ (pre- and post-collisional points on $\partial \mathcal D \times \mathcal S^1$
are identified). Consider the measure $\kappa = c Leb_{\mathcal D} \otimes Leb_{\mathcal S^1}$, where
$c$ is a normalizing constant. We assume the {\it finite horizon} condition, i.e. that the 
time in between two collisions with $\partial \mathcal D$ is bounded.
(This assumption is natural since if the horizon is infinite, then the return time has infinite second
moment and a non-standard normalization is needed in the CLT (\cite{SzV07,CD09})).
We can regard $\Psi^t$ as a suspension flow over the {\it billiard ball map}: the Poincar\'e section on the
boundary of the scatterers. Namely, the billiard ball map is 
$\mathbb F : \mathbb M \rightarrow \mathbb M$, where 
$$\mathbb M = \{ x = (q, \bm v) \in \partial \mathcal D \times \mathcal S^1, \langle \bm v, n \rangle \geq 0 \},$$
where $n$ is the normal vector of $\partial \mathcal D$ at the point $q$ pointing inside $\mathcal D$
(post-collisional point) and $\mathbb F (x) = \Psi^{\upsilon(x)} (x)$, $\upsilon$ being the time needed until the next 
collision. Let the projection of $\kappa$ to $\mathbb M$ be denoted by $\lambda$ (then $\lambda$ is the SRB measure,
and it has density $c \cos (\bm v)$ with respect to the Lebesgue measure on $\mathbb M$).
Let $\bm \chi: \aleph \rightarrow \mathbb R$ be a piecewise H\"older observable and 
$\check{\bm \chi}: \mathbb M \rightarrow \mathbb R$ is defined by 
$\check{\bm \chi} (x) = \int_0^{\upsilon(x)} \bm \chi (x,s) ds$
as before.
For the dynamical system 
$(\mathbb M, \lambda, \mathbb F)$,
a tower with exponential tails was constructed in \cite{Y98}. 
Thus Propositions \ref{corpoly1} and \ref{propunbdHhyp} imply

\begin{proposition}
\label{propbilliard} 
Assume that $(\check{\bm \chi}, \upsilon)$ is minimal and its linearized
group falls into cases {\bf (A)}, {\bf (B)} or {\bf (C)}. 
Then 
the conclusion of Theorem \ref{thm1} holds for $(\Phi, \bm \varphi)$
with $h(x) = 0$, $h_{\tau}(x) = 0$.
Assume furthermore that
$(\check{\bm \chi}, \upsilon)$ is non-arithmetic. Then for any continuous $\mathfrak X, \mathfrak Y: \aleph \rightarrow \mathbb R$,
any continuous and compactly supported $\mathfrak Z: \mathbb R \rightarrow \mathbb R$,
and any $W(t)$ with $W(t) / \sqrt t \rightarrow W$,
\begin{align}
&\lim_{t \rightarrow \infty} t^{1/2} \int_{\aleph} \mathfrak X (x,s) \mathfrak Y( \Psi^t (x,s))
\mathfrak Z \left( \int_0^t \bm \chi(\Psi^{s'}(x,s))ds'   - W(t) \right) d\kappa(x,s) \nonumber \\
&= \mathfrak g_{\Sigma} (W) 
\int \mathfrak X d \kappa \int \mathfrak Y d \kappa \int \mathfrak Z d Leb.
\nonumber
\end{align}
\end{proposition}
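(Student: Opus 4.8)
The plan is to obtain Proposition~\ref{propbilliard} as a direct application of the hyperbolic Young tower machinery developed in Sections~\ref{secpoly} and~\ref{ScHYT} to the standard tower for finite-horizon planar dispersing billiards. The first step is to recall from \cite{Y98} that the billiard ball map $(\mathbb M,\lambda,\mathbb F)$ admits a Young tower $(\hat\Delta,\hat\nu,\hat F)$ with hyperbolic product structure whose return-time distribution has exponentially decaying tails. I would check that this tower fits the framework at the start of Section~\ref{ScHYT}: it satisfies assumptions (Y1)--(Y5) of \cite{PSZ17} (this is precisely what Young's construction delivers), and hence all the structural properties listed there — the s/u-set partition, the isomorphism properties of $\hat F$ between consecutive levels, the semiconjugacy $\pi:\hat\Delta\to\bm M$ onto a set with hyperbolic product structure, and the uniform contraction/expansion of $\pi$-images of stable/unstable manifolds under $\hat T$. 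The finite-horizon hypothesis enters here to ensure $\upsilon$ is bounded (so $\hat\zeta$ is) and to give the usual uniform hyperbolicity and bounded-distortion estimates for $\mathbb F$ on its finitely many smooth components.

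The second step is to verify conditions (B1) and (B2). Condition (B1) — that the associated expanding quotient tower $(\tilde\Delta,\tilde\nu,\tilde F)$ satisfies the tail bound \eqref{TailPoly} for some $\beta>2$ — is immediate because exponential tails dominate every polynomial tail, so \eqref{TailPoly} in fact holds for all $\beta$; note that the induced roof $\tau$ over the first-return map to $X=\tilde\Delta_0$ is nonetheless unbounded, which is why only cases {\bf (A)}--{\bf (C)} are treated and why the extra argument built into Proposition~\ref{propunbdHhyp} is needed. Condition (B2) — the uniform bound $d(\pi(\hat F^j x),\pi(\hat F^j y))\le K\,d(\pi(x),\pi(y))\,\hat\theta^j$ for $x,y$ in the same base cell $\hat\Delta_{0,k}$ and $0\le j\le r_k-1$ — is exactly the statement that a stable curve contracts at a rate uniform in the column index $k$ and in the level $j$; this is part of how the tower in \cite{Y98} is built (each column is a block of forward iterates of a hyperbolic product set, with expansion/contraction rates bounded independently of the return time), so it follows from the standard billiard estimates.

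The third step is to introduce the observables and invoke the abstract results. The free-flight time $\upsilon$ is H\"older (and, by finite horizon, bounded away from $0$ and $\infty$), and $\check{\bm\chi}$ is H\"older by hypothesis; their pull-backs to $\hat\Delta$ and the induced sums $\tau$ and $\check{\bm\varphi}$ over the first-return map $T$ to $X$ are dynamically H\"older there, the distortion bounds propagating the modulus of continuity through a return-time block. Minimality and non-degeneracy of $(\check{\bm\varphi},\tau)$ follow from the corresponding hypotheses on $(\check{\bm\chi},\upsilon)$ via Lemmas~\ref{lem:Gouezel1.4} and~\ref{lem:suspgroup} (applied to the hyperbolic tower as in Section~\ref{ScHYT}), and Proposition~\ref{proppoly2} ensures the linearized group of $(\check{\bm\varphi},\tau)$ lies in the same case {\bf (A)}--{\bf (C)} as that of $(\check{\bm\chi},\upsilon)$. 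Thus Theorem~\ref{propYoungpoly} gives {\bf (H1)}, {\bf (H2)}, {\bf (H6)}, {\bf (H7)} for $(\Phi,\bm\varphi)$; {\bf (H3)} is the classical mixing of the billiard flow, {\bf (H4)} is $\mu(\bm\varphi)=0$, and {\bf (H5)} holds since $\bm\chi$ is a bounded piecewise H\"older observable. Proposition~\ref{corpoly1} (via Theorem~\ref{thm1}) then gives the first assertion with $h=h_\tau=0$, and under the additional non-arithmeticity hypothesis Proposition~\ref{propunbdHhyp} applies verbatim — it already handles the passage from test functions supported on $\{s\le M\}$ to arbitrary bounded continuous test functions on $\aleph$ — giving the second assertion.

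The main obstacle is the verification that Young's tower really does satisfy (B2) together with the claim that the induced data $(\check{\bm\varphi},\tau)$ is dynamically H\"older on the base: one must check that projected stable distances contract at a rate uniform over the unboundedly tall columns, and that the H\"older moduli of $\upsilon$ and $\check{\bm\chi}$ survive summation over a return block — both are standard consequences of the uniform hyperbolicity and distortion control for finite-horizon dispersing billiards, but they are the substantive points. A secondary point of care is the transfer of minimality and non-degeneracy from the flow-level pair $(\check{\bm\chi},\upsilon)$ to the base-level pair $(\check{\bm\varphi},\tau)$ via the transfer lemmas. Once these are in place, Proposition~\ref{propbilliard} is a direct application of Propositions~\ref{corpoly1} and~\ref{propunbdHhyp} — and, as noted in the introduction, it generalizes the finite-horizon result of \cite{DN16}.
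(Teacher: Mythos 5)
Your proposal is correct and follows essentially the same route as the paper, which simply invokes Young's exponential-tail tower for the billiard map \cite{Y98} and then applies Propositions \ref{corpoly1} and \ref{propunbdHhyp} through the hyperbolic-tower framework of Section \ref{ScHYT}. The additional verifications you spell out — (B1) from exponential tails, (B2) and the dynamical H\"older property of the induced observables from the uniform hyperbolicity and distortion control of finite-horizon dispersing billiards, and the transfer of minimality and of the case {\bf (A)}--{\bf (C)} classification via Lemmas \ref{lem:Gouezel1.4}, \ref{lem:suspgroup} and Proposition \ref{proppoly2} — are exactly the details the paper leaves implicit, and you handle them correctly.
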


One special case
of Proposition \ref{propbilliard} (Case {\bf (C)}) for the Sinai billiard flow
(namely, when $\bm \chi$ is the horizontal coordinate of the free flight function)
is analyzed in Section A.2 of \cite{DN16}.
We remark that although finding the minimal group of $(\check{\bm \varphi}, \tau)$ in general is not easy,
it is possible in some special cases such as the one studied in \cite{DN16}
(cf. \cite[Lemma A.3]{DN16}).

\subsection{Geometric Lorenz flows}
\label{SSLorenz}

Let us consider 
  a geometric Lorenz flow $T^t: \mathbb R^3 \rightarrow \mathbb R^3$
with the SRB measure $\hat \mu$ as defined in \cite{HM06} (in \cite{HM06} the SRB measure is denoted by $\mu$).
In \cite{HM06},
$T^t$ is represented as a suspension over a Poincar\'e map $P: X \rightarrow X$ with roof function $v$.
(This function is denoted by $h$ in \cite{HM06}).
Given H\"older observable $\bm \chi: \mathbb R^3 \rightarrow \mathbb R$,
let $\check{\bm \chi} = \int_0^{\upsilon (x)} \bm \chi (x,s) ds$.
Furthermore, in \cite{HM06},
a tower $(\Delta, F, \tilde \nu)$
satisfying assumptions (A1)--(A4). of Section \ref{sec:towerpolydef} is constructed.
(More precisely, as usual, the measure $\tilde \nu$ is not a priori given, but
it exists by  \cite{Y99} and then it is pulled back to $\mathbb R^3$.
  We note that slightly different earlier constructions
are also available in the literature. However we follow \cite{HM06} for simplicity,
so we do not review the earlier work here).
They also show that the roof function $f = \tilde \tau$ as well as any function  
$f= \tilde{\check{ \bm \chi}}$
corresponding to a given H\"older observable $\bm \chi: \mathbb R^3 \rightarrow \mathbb R$ 
satisfy
$$
|f(x,l)| \leq C r(x), \quad |f(x,l) - f(y,l)| \leq C r(x) \varkappa^{s(x,y)}
$$
(here, as usual, $(x,l) \in \Delta$). Although these functions are not in
$C_{\varkappa}(\Delta, \mathbb R)$, but we can easily "stretch" the tower by defining $(\Delta', F', \tilde \nu ')$
with $r(\Delta'_{0,k}) = r(\Delta_{0,k})^2$, $\tilde \nu ' ( \Delta'_{0,k}) = \tilde \nu( \Delta_{0,k})$.
The corresponding functions are $\tilde{\tau}' (x,l) := \tilde{\tau} (x,\lfloor l/r(x) \rfloor) / r(x)$,
$\tilde{\check{ \bm \varphi}}' (x,l) := \tilde{\check{ \bm \varphi}} (x,\lfloor l/r(x)\rfloor ) / r(x)$.
The first return maps to $\Delta_0$ and $\Delta'_0$ are clearly isomorphic just like the
suspension flows with base
$\Delta '$ and roof function $\tilde{\tau}'$ and with base
$\Delta $ and roof function $\tilde{\tau}$.
Furthermore, the functions $\tilde{\check{ \bm \varphi}}', \tilde{\tau}'$ are elements of
$C_{\varkappa}(\Delta', \mathbb R)$. Since $\tilde \nu (r> n)$ decays superpolynomially,
\eqref{TailPoly} holds with $\beta >2$ for $\Delta '$.
Thus Proposition 
\ref{propunbdHhyp} gives

\begin{proposition}
Let $T^t$ be geometric Lorenz flow. Assume that $(\check{\bm \chi},\upsilon)$
is non-arithmetic. 
Then for any continuous $\mathfrak X, \mathfrak Y: \mathbb R^3 \rightarrow \mathbb R$,
any continuous and compactly supported $\mathfrak Z: \mathbb R \rightarrow \mathbb R$,
and any $W(t)$ with $W(t) / \sqrt t \rightarrow W$,
\begin{align}
&\lim_{t \rightarrow \infty} t^{1/2} \int_{\mathbb R^d} \mathfrak X (x) \mathfrak Y( T^t (x))
\mathfrak Z \left( \int_0^t \bm \chi(T^{s}(x))ds   - W(t) \right) d\hat \mu(x) \nonumber \\
&= \mathfrak g_{\Sigma} (W) 
\int \mathfrak X d \hat \mu \int \mathfrak Y d \hat \mu \int \mathfrak Z d Leb.
\nonumber
\end{align}
\end{proposition}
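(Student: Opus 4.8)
The plan is to deduce the statement from Proposition \ref{propunbdHhyp} once the geometric Lorenz flow has been put into the form required by that proposition; the content of the proof is therefore the verification, already outlined in the paragraph preceding the statement, that the hypotheses hold.

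First I would recall the representation of \cite{HM06}: $T^t$ on $(\mathbb R^3, \hat \mu)$ is measurably isomorphic to the suspension flow $\Psi^t$ over the Poincar\'e map $P : X \to X$ with H\"older roof $\upsilon$, and this Poincar\'e map is modeled by a hyperbolic Young tower whose stable quotient $(\Delta, F, \tilde \nu)$ satisfies (A1)--(A4) of Section \ref{sec:towerpolydef}, with $\tilde \nu(r > n)$ decaying superpolynomially. In particular the tail condition \eqref{TailPoly} is then satisfied with any exponent $\beta > 2$, and the hyperbolic-tower hypotheses (B1), (B2) underlying Section \ref{ScHYT} are in force.

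The one genuine obstruction is that the lifted roof $\tilde \tau$ and the lifted integrated observable $\tilde{\check{\bm \chi}}$ are not elements of $C_{\varkappa}(\Delta, \mathbb R)$: as recorded above they only satisfy $|f(x,l)| \le C r(x)$ and $|f(x,l) - f(y,l)| \le C r(x) \varkappa^{s(x,y)}$. I would handle this exactly by the stretching device: pass to the tower $(\Delta', F', \tilde \nu')$ with $r(\Delta'_{0,k}) = r(\Delta_{0,k})^2$ and $\tilde \nu'(\Delta'_{0,k}) = \tilde \nu(\Delta_{0,k})$, and replace $f$ by $f'(x,l) := f(x, \lfloor l / r(x) \rfloor)/r(x)$. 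Dividing by $r(x)$ absorbs the $r(x)$ prefactor in both the sup-norm and the H\"older bound, so that $\tilde{\tau}', \tilde{\check{\bm \varphi}}' \in C_{\varkappa}(\Delta', \mathbb R)$. One then checks that the first-return map to $\Delta'_0$ is isomorphic to the first-return map to $\Delta_0$, and that the suspension flow over $\Delta'$ with roof $\tilde{\tau}'$ is isomorphic to the suspension flow over $\Delta$ with roof $\tilde \tau$ (stretching a column by an integer factor while dividing its roof contributions by the same factor changes neither the total return time nor the flow direction). Since $\tilde \nu(r > n)$ decays superpolynomially, so does $\tilde \nu'(r' > n) = \tilde \nu(r > \sqrt n)$, hence \eqref{TailPoly} holds for $\Delta'$ with $\beta > 2$. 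Non-arithmeticity of $(\check{\bm \chi}, \upsilon)$, which is assumed, passes to the corresponding pair on $\Delta'$ by the same reasoning as in Lemmas \ref{lem:Gouezel1.4} and \ref{lem:suspgroup}.

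With all of this in place, Proposition \ref{propunbdHhyp} applies to the flow $\Psi^t$ over $(X, \lambda, P)$ with roof $\upsilon$, observable $\bm \chi$, and the test functions of the statement; transporting the conclusion back through the isomorphism $(\aleph, \kappa, \Psi^t) \cong (\mathbb R^3, \hat \mu, T^t)$ yields precisely the displayed MLCLT for $T^t$. The main obstacle is the bookkeeping of the previous paragraph — namely that the stretched tower still satisfies all of (A1)--(A4), (B1), (B2) (with possibly modified constants), that the flow isomorphism is genuine so that $\int_0^t \bm \chi(T^s x)\,ds$ is computed identically on both models, and that non-arithmeticity survives the stretching; none of this is deep, and the remainder is a direct citation of Proposition \ref{propunbdHhyp}.
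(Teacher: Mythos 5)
Your proposal follows the same route as the paper's proof: invoke the HM06 suspension model, handle the unboundedness of $\tilde\tau$ and $\tilde{\check{\bm\chi}}$ by stretching the tower so that $r(\Delta'_{0,k}) = r(\Delta_{0,k})^2$ with $f'(x,l)=f(x,\lfloor l/r(x)\rfloor)/r(x)$, observe the superpolynomial tail survives (hence \eqref{TailPoly} with $\beta>2$), and then cite Proposition~\ref{propunbdHhyp}. The extra sentences you add (checking non-arithmeticity survives stretching, and that the ergodic integral is computed identically under the flow isomorphism) are bookkeeping the paper leaves implicit, but the underlying argument and its key step are identical.
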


Note that $\upsilon$
is non-arithmetic by \cite{LMP05}, but the non-arithmeticity of the pair $(\check{\bm \chi},\upsilon)$
is a non-trivial assumption.


\end{document}